\newtheorem{theorem}{Theorem}[section]
\newtheorem{prop}[theorem]{Proposition}
\newtheorem{proposition}[theorem]{Proposition}
\newtheorem{thm}[theorem]{Theorem}
\newtheorem{lemma}[theorem]{Lemma}
\newtheorem{assumption}[theorem]{Assumption}
\theoremstyle{definition} 
\newtheorem{defn}[theorem]{Definition}
\newtheorem{definition}[theorem]{Definition}
\newtheorem{example}[theorem]{Example}
\newtheorem{remark}[theorem]{Remark}
\newtheorem{question}[theorem]{Question}
\newcommand{\cO}{\mathcal{O}}
\newcommand{\qu}{/\kern-.7ex/}
\newcommand{\lqu}{\backslash \kern-.7ex \backslash}%left
\newcommand{\on}{\operatorname}
\newcommand{\Hom}{\on{Hom}}
\newcommand{\bM}{\overline{\mathcal M}}
\newcommand{\bt}{\mathbf{t}}
\title[Relative QH under birational transformations]{Relative quantum cohomology under birational transformations}
\author{Fenglong You}
\address{Department of Mathematics, University of Oslo, PO Box 1053 Blindern,
0316 Oslo, Norway}
\email{youf@math.uio.no}
\thanks{}
\keywords{}
\begin{document}
\date{\today}

\begin{abstract} 
We study how relative quantum cohomology, defined in \cite{TY20c} and \cite{FWY}, varies under birational transformations. For toric complete intersections with simple normal crossings divisors that contain the loci of indeterminacy, we prove that their respective relative $I$-functions can be directly identified. For toric complete intersections with smooth divisors, we prove that their respective relative $I$-functions are related by analytic continuation. We also study some connections with extremal transitions and FJRW theory.
\end{abstract}

\maketitle 

\tableofcontents

\section{Introduction}

\subsection{Motivations}\label{sec:motivation}

Let $X$ be a smooth projective variety and $D\subset X$ be a divisor which is either smooth or simple normal crossings. In joint work with H. Fan and L. Wu \cite{FWY}, we defined relative Gromov--Witten theory with (possibly) negative contact orders which generalized relative Gromov--Witten theory of \cite{Li1}, \cite{Li2}, \cite{LR} and \cite{IP}. Our generalization allows us to define quantum cohomology for relative Gromov--Witten theory, namely relative quantum cohomology. The result of \cite{FWY} has been generalized to simple normal crossing pairs in joint work with H.-H. Tseng in \cite{TY20c}. The invariants for pairs in \cite{FWY} and \cite{TY20c} are limits of orbifold Gromov--Witten invariants of (multi-)root stacks when roots are taken to be sufficiently large. So we formally consider them as Gromov--Witten invariants of infinite root stacks. Since these invariants are naturally defined for the pairs, the quantum cohomology constructed in \cite{FWY} and \cite{TY20c} are called relative quantum cohomology.

The main purpose of this paper is to study how relative quantum cohomology varies under birational transformations. There are at least three motivations for studying this question. 

The first motivation is to understand the relation between the formal Gromov--Witten theory of infinite root stacks in \cite{TY20c} and the (punctured) logarithmic Gromov--Witten theory of \cite{ACGS}, \cite{AC} and \cite{GS13}. It is well-known that log Gromov--Witten invariants are birational invariance \cite{AW}. However, the formal Gromov--Witten invariants of infinite root stacks defined in \cite{TY20c} are not birational invariance in general as pointed out by Dhruv Ranganathan. Therefore, in principle, understanding how the formal Gromov--Witten invariants of infinite root stacks vary under birational transformations should tell us how these two theories differ. We predict that structures of the formal Gromov--Witten theory of infinite root stacks are still invariant under birational transformations although single invariants are not.

The second motivation is to generalize the crepant/discrepant transformation conjecture which is a central topic in absolute Gromov--Witten theory, see for example \cite{Ruan}, \cite{BG09}, \cite{CCIT09}, \cite{CY}, \cite{GW}, \cite{LLW10}, \cite{ILLW}, \cite{LLW16a}, \cite{LLW16b}, \cite{LLW16c}, \cite{CIJ}, \cite{AS18}, \cite{AS}, \cite{Iritani20}. The crepant transformation conjecture predicts that, for K-equivalent smooth varieties or Deligne--Mumford stacks, their quantum cohomology should be related by analytic continuation. We propose a generalization and a reformation of this conjecture by associating the targets with appropriate divisors such that the pairs are ``log-K-equivalent".  Following the spirit of Ruan's crepant transformation conjecture, it should be reasonable to expect that any Gromov--Witten theory associated to pairs should be ``invariant" under log-crepant birational transformations. 

The third motivation is to use the degeneration scheme \cite{MP} to prove the crepant transformation conjecture in general. The degeneration technique plays an important role in the crepant transformation conjecture. For example, it has already been shown in \cite{LLW10} that the invariance of quantum rings under ordinary flops can be reduced to the case of relative local models by applying the degeneration formula. One can try to apply mirror symmetry and consider the mirrors of degenerations. The Doran--Harder--Thompson conjecture \cite{DHT} states that if a Calabi--Yau manifold admits a Tyurin degeneration to two quasi-Fano varieties, then the Landau--Ginzburg models of these two quasi-Fanos can be glued together to the mirror of the original Calabi--Yau manifold with a $\mathbb P^1$-fibration. In joint work with C. Doran and J. Kostiuk \cite{DKY}, we proved that a B-model gluing formula for toric complete intersections that relates relative $I$-functions of two quasi-Fanos (with their smooth anticanonical divisors) and the absolute $I$-function of the Calabi--Yau. The Doran--Harder--Thompson conjecture and the gluing formula in \cite{DKY} have recently been generalized to the degeneration of quasi-Fano varieties \cite{DKY21}. The gluing formula is expected to be true in general. The general gluing formula, the crepant transformation conjecture for pairs and the degeneration scheme  together may be used to prove the genus zero crepant transformation conjecture as long as the varieties/Deligne-Mumford stacks admit good degenerations. Therefore, although we are mostly focusing on the discrepant case, the crepant case is also important.

We expect that by replacing absolute quantum cohomology by relative quantum cohomology, we are able to have a unified and simplified discussion of crepant and discrepant transformations. We expect that relative quantum cohomology behaves better under birational transformations. For example, The natural map from the root stack $X_{D,r}$ to $X$ is birational. It was proved in \cite{TY16} and \cite{CCR} that the Gromov--Witten theory of $X_{D,r}$ is determined by the Gromov--Witten theory of $X$, Gromov--Witten theory of $D$ and the restriction map $H^*(X)\rightarrow H^*(D)$. On the other hand, the relation between relative Gromov--Witten theory of $(X_{D,r},D_r)$ and $(X,D)$ is significantly simpler: they coincide by \cite{AF}.

\subsection{Set-up}\label{sec:intro-set-up}

We consider a birational transformation $\phi: X_+\dashedrightarrow X_-$ between smooth varieties or Deligne--Mumford stacks. Suppose there is a smooth variety or a Deligne--Mumford stack $\tilde X$ with projective birational  morphisms $f_{\pm}:\tilde{X}\rightarrow X_{\pm}$ such that the following diagram commute  
\[
\begin{tikzcd}
 & \arrow[dl,swap]{}{f_+} \tilde X \arrow[dr]{}{f_-} \\
X_+  \arrow[rr,dashed]{}{\phi} && X_-.
\end{tikzcd}
\]
We consider divisors $D_+\subset X_+$ and $D_-\subset X_-$ such that
\[
f_+^*(K_{X_+}+D_+)=f_-^*(K_{X_-}+D_-).
\]

In this paper, we focus on the genus zero Gromov--Witten theory. Instead of comparing single invariants, we would like to take a structural approach by considering relative quantum cohomology. When comparing relative quantum cohomology, we need to decide which divisors that we choose. Different choices of the divisors will lead to very different relations between their relative quantum cohomology. There are at least two natural choice of divisors:
\begin{enumerate}
\item Simple normal crossings divisors containing the loci of indeterminacy such that the pairs are ``log-K-equivalent".
\item Smooth divisors such that the pairs are ``log-K-equivalent".
\end{enumerate}

We consider Case (1) as the most natural case. Heuristically, Gromov--Witten theory of a pair $(X,D)$ may be considered as an enumerative theory for the complement $X\setminus D$ (at least in some special cases). If $X_+$ and $X_-$ are birational and we choose $D_+\subset X_+$ and $D_-\subset X_-$ such that $D_+$ and $D_-$ contain the loci of indeterminacy, and $X_+\setminus D_+$ and $X_-\setminus D_-$ are isomorphic, then we should expect Gromov--Witten theories of $(X_+,D_+)$ and $(X_-,D_-)$ are equivalent in some sense. 

The relation in Case (2) is expected to be more complicated. But it will have lots of connections to classical results in Gromov--Witten theory. The invariants that we consider in this case are simply relative Gromov--Witten invariants of \cite{LR}, \cite{IP},  \cite{Li1}, \cite{Li2}, as well as the generalization of relative Gromov--Witten invariants of smooth pairs with negative contact orders in \cite{FWY} and \cite{FWY19}. In this case, we also have an additional motivation from mirror symmetry. In terms of mirror symmetry, it is very natural to consider Gromov--Witten invariants of (smooth or simple normal crossings) pairs. The Fano/Landau--Ginzburg mirror symmetry states that the mirror to a Fano variety $X$ is a Landau--Ginzburg model $(X^\vee,W)$ which is a variety $X^\vee$ with a superpotential $W: X^\vee \rightarrow \mathbb C$. Then one expects that the generic fiber of the superpotential $W$ is mirror to the smooth anticanonical divisor $D\subset X$. Therefore, one indeed considers the Landau--Ginzburg model $(X^\vee, W)$ as the mirror to the pair $(X,D)$. Since many proofs for crepant transformation conjecture are based on mirror symmetry, It is very natural to consider relative Gromov--Witten theory of smooth pairs at least in the Fano case. 

\subsection{Results}

The main results are in the setting of toric wall-crossings. In other words, we consider toric Deligne--Mumford stacks $X_\pm$ of the form $[\mathbb C^m\sslash_{w_\pm}K ]$ that are related by a single wall-crossing in the space of stability conditions.

We investigate the following cases:
\begin{enumerate}
    \item GIT wall-crossing for toric complete intersection stacks with simple normal crossings toric divisors containing the loci of indeterminacy.
    \item GIT wall-crossing for toric complete intersection stacks with smooth nef divisors.
\end{enumerate}

The main ingredients for our proof are the relative mirror theorems in \cite{FTY} and \cite{TY20b}, where the authors constructed relative $I$-functions and showed that relative $I$-functions lie in Givental's Lagrangian cone for the pairs. In this paper, it is more convenient to consider a function called the $H$-function which is related to the $I$-function via the $\hat{\Gamma}$-class (see Equation (\ref{H-I})).

\subsubsection{Case (1)}

\begin{theorem}[Theorem \ref{thm-toric-stack-snc} and Theorem \ref{thm-toric-ci}]\label{thm-intro-toric-snc}
Suppose a birational transformation $\phi: X_+\dashedrightarrow X_-$ between toric Deligne--Mumford stacks (or toric complete intersections) is given by a single toric wall-crossing. Let $D_+$ and $D_-$ be simple normal-crossings divisors such that  toric divisors containing the loci of indeterminacy  are the irreducible components, then their relative $H$-functions are directly identified (without analytic continuation).
\end{theorem}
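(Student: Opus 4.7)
The plan is to use the explicit toric mirror theorem for SNC pairs from \cite{FTY,TY20b} to write down closed-form expressions for the relative $I$-functions of $(X_+, D_+)$ and $(X_-, D_-)$, then translate to the $H$-side via the $\hat\Gamma$-relation (\ref{H-I}) and check that the two power series are literally equal under the natural identification of Chen--Ruan cohomology and Novikov variables, with no analytic continuation required.

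First I would write $X_\pm = [\C^m \sslash_{w_\pm} K]$ with a common lattice $\Hom(K,\Z)$, and denote by $D_1,\dots,D_m$ the toric divisor classes on either GIT quotient. Under a single toric wall-crossing the secondary fans of $X_\pm$ share all chambers adjacent to the wall, and there is a canonical identification of $H^*_{CR}(X_+)$ with $H^*_{CR}(X_-)$ on the subspace generated by the toric divisors whose rays are common to the two fans. By the assumption that $D_\pm$ contain the loci of indeterminacy of $\phi$, the remaining toric divisors, i.e.\ those that are contracted by $f_\pm$ or flipped across the wall, are precisely the irreducible components $D_\pm$ collect. Call this common index set $S$ and write $S^c$ for its complement.

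Next I would substitute into the toric relative mirror formula: the relative $I$-function is a sum over a lattice of effective curve classes and contact orders, with hypergeometric factor
\[
\prod_{i \in S^c} \frac{\prod_{k \leq 0}(D_i + kz)}{\prod_{k \leq D_i \cdot d}(D_i + kz)} \;\cdot\; \prod_{i \in S} (\text{modified contact-order factor}),
\]
where the $S$-factors are the truncated/ Heaviside-type expressions produced by the root-stack construction in \cite{FTY,TY20b} rather than full hypergeometric series. Because $S$ and $S^c$ are the same for $(X_+,D_+)$ and $(X_-,D_-)$, and because the $D_i$ for $i\in S^c$ correspond to rays of the fan common to both chambers, the $S^c$-factor agrees on the two sides and the $S$-factor agrees on the two sides; the $\hat\Gamma$-transformation to the $H$-function likewise only involves the $i\in S^c$ classes that are shared. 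For the complete-intersection refinement, quantum Lefschetz multiplies both sides by the same hypergeometric factor attached to the defining bundle (whose pullbacks to $\tilde X$ coincide), preserving the identification.

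The step I expect to be the main obstacle is the careful bookkeeping of Novikov variables. Under the toric wall-crossing, the effective cone $\NE(X_+)$ differs from $\NE(X_-)$ exactly along the curve classes dual to the flipping wall, but these classes pair nontrivially only with the $D_i$ for $i \in S$; thus the corresponding generating variables live in the contact-order directions that, for divisors in $D_\pm$, have been repackaged as independent formal parameters. The content of the argument is therefore that once one sums over contact orders for $i\in S$ and over curve classes jointly, both sides are rewritten as power series on a common lattice intrinsic to the secondary fan, and the coefficients match term by term. This is the technical heart of Theorem \ref{thm-toric-stack-snc}, and it is the step where the hypothesis that $D_\pm$ contain the indeterminacy loci is used in an essential way: it is precisely this hypothesis that forces the divisor classes associated with the flipping rays into the ``relative'' part of the theory, where the wall-crossing is absorbed into the formal contact-order variables rather than appearing as a nontrivial analytic continuation of the Novikov ring.
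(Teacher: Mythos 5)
Your proposal follows essentially the same route as the paper's proof of Theorems \ref{thm-toric-stack-snc} and \ref{thm-toric-ci}: write the explicit relative $I$-functions from the mirror theorem of \cite{TY20c} and \cite{TY20b}, pass to the $H$-functions via the $\hat{\Gamma}$-class relation (\ref{H-I}), and observe that the only surviving $\Gamma$-factors are those attached to the divisors $\bar D_i$ with $i\in M_0$ (shared by both chambers), while the wall-crossing divisors are absorbed into the relative contact-order data, so the coefficients agree term by term under the change of variables (\ref{change-of-variables}) and no analytic continuation is needed. The one imprecision is your claim of a ``common index set'' of relative components: for a Type II wall-crossing the components of $D_+$ and $D_-$ differ by the exceptional ray, so the identification is stated coefficient-wise for $d_+-d_-\in\mathbb{Q}e$ (with extended variables specialized to zero as in the $\mathbb{P}^2$/$F_1$ example), but this does not change the substance of the argument.
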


\begin{remark}
This birational invariance of the formal orbifold Gromov--Witten theory of infinite root stacks may be considered as both expected and unexpected. It is unexpected because single orbifold invariants are not birational invariants as pointed out by D. Ranganathan. On the other hand, it is expected because the basic principle of the crepant transformation conjecture suggests that the formal orbifold Gromov--Witten theory of infinite root stacks should be invariant under log-crepant transformations on the level of generating functions.
\end{remark}

We also study a variant of Case (1) for toric complete intersections in Section \ref{sec:exchange-divisors}. Given a convex line bundle $L\rightarrow X$, one can consider the Gromov--Witten theory of the zero locus $D$ of a regular section of $L$. On the other hand, one can also consider the relative Gromov--Witten theory of $(X,D)$. The mirrors of $D$ and $(X,D)$ are closely related. For simplicity, we explain it in the case of a Fano variety $X$ with its smooth anticanonical divisor $D$. As the motivation that we mentioned in Section \ref{sec:intro-set-up}, the mirror Landau--Ginzburg model of $(X,D)$ can be considered as a family of Calabi--Yau manifolds that are mirror to $D$. On the other hand, the mirror of $D$ is also a family of Calabi--Yau manifolds of the same type. These two families of Calabi--Yau manifolds are closely related. Therefore, one may expect that the Gromov--Witten theory of $(X,D)$ and $D$ are closely related. When $D$ is not anticanonical, this motivation still makes sense by considering higher rank Landau--Ginzburg models in \cite{DKY21}. We will also explain it in Section \ref{sec:exchange-divisors}.

Suppose we have line bundles $L_{1,\pm}$ and $L_{2,\pm}$ over $X_\pm$ satisfying the assumption in Section \ref{sec:toric-ci}. Let $s_{i,\pm}$ be regular sections of $L_{i,\pm}\rightarrow X_\pm$ for $i=1,2$. Let $Z_+\subset X_+$ be the hypersurface defined by $s_{1,+}$ and $Z_-\subset X_-$ be the hypersurface defined by $s_{2,-}$. Let $D_{2,+}$ be the smooth divisor of $X_+$ defined by $s_{2,+}$ and $D_{1,-}$ be the smooth divisor of $X_-$ defined by $s_{1,-}$.

Let $D_+$ and $D_-$ be the simple normal crossings divisors chosen in Theorem \ref{thm-intro-toric-snc}. Then we consider 
\[
D_{Z,+}=(D_{2,+}+D_+)\cap Z_+
\]
and 
\[
D_{Z,-}=(D_{1,-}+D_-)\cap Z_-.
\]
Then we can compare the ambient parts of the relative quantum cohomology of $(Z_+,D_{Z,+})$ and $(Z_-,D_{Z,-})$ pulled back from the corresponding relative quantum cohomology of the ambient toric varieties $X_\pm$.

\begin{theorem}[Theorem \ref{thm-exchange-divisor}]
The relative $H$-functions of $(Z_+,D_{Z,+})$ and $(Z_-,D_{Z,-})$ can be identified directly (without analytic continuation).
\end{theorem}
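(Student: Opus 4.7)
The plan is to reduce the claim to Theorem \ref{thm-intro-toric-snc} by exhibiting both relative $H$-functions as parallel hypergeometric modifications of the $H$-functions of the ambient toric SNC pairs $(X_\pm,D_\pm)$. The relative mirror theorems of \cite{FTY} and \cite{TY20b} express the ambient part of the relative $H$-function of $(Z_+,D_{Z,+})$ as the relative $H$-function of $(X_+,D_+)$ successively modified by two hypergeometric factors: one for enlarging the boundary by the smooth divisor $D_{2,+}$ (depending on $L_{2,+}$), and one for restricting to the hypersurface $Z_+$ cut out by a section of $L_{1,+}$. The symmetric description for $(Z_-,D_{Z,-})$ modifies $(X_-,D_-)$ by a smooth divisor factor for $L_{1,-}$ followed by a complete intersection factor for $L_{2,-}$.

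First I would write these modifications explicitly. The complete intersection twist contributes a product of the form $\prod_{k=1}^{\langle L,d\rangle}(L+kz)$, while the smooth divisor modification contributes $\prod_{k}(L+kz)$ over a shifted range determined by the contact order. The central computation is a hypergeometric identity: the product of the smooth divisor factor for $L_{2,+}$ and the complete intersection factor for $L_{1,+}$ coincides with the product obtained by swapping the roles of $L_{1,+}$ and $L_{2,+}$. Equivalently, the combined modification of the ambient relative $H$-function is symmetric in $(L_{1,+},L_{2,+})$. This reflects the fact that at the level of $I$-functions a smooth divisor $\{s=0\}$ and a hypersurface $\{s=0\}$ contribute the same kind of hypergeometric product up to edge terms that recombine symmetrically.

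Next, Theorem \ref{thm-intro-toric-snc} identifies the relative $H$-functions of $(X_+,D_+)$ and $(X_-,D_-)$ directly. Under the single toric wall-crossing $\phi\colon X_+\dashrightarrow X_-$, the line bundles $L_{i,+}$ and $L_{i,-}$ descend from a common character of the GIT group $K$, so their $c_1$ classes match under the wall-crossing identification on cohomology. Combining the match of ambient $H$-functions with the exchange symmetry of the combined modifications yields the direct identification of the relative $H$-functions of $(Z_+,D_{Z,+})$ and $(Z_-,D_{Z,-})$, with no analytic continuation needed. The relation (\ref{H-I}) between $H$- and $I$-functions is compatible with both the modifications and the wall-crossing, so the analogous statement for $I$-functions follows.

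The hardest step will be the hypergeometric exchange identity. One must align the contact-order conventions of the relative mirror theorem in \cite{TY20b} with the Euler-class twist for the complete intersection of \cite{FTY}, and in particular track the pole/zero structure for degrees in which $\langle L_{1,+},d\rangle$ or $\langle L_{2,+},d\rangle$ is negative, using the negative-contact extension of \cite{FWY}. Once this identity is established, the proof feeds cleanly into Theorem \ref{thm-intro-toric-snc} and the identification holds on the nose.
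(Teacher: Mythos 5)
Your proposal is correct and follows essentially the same route as the paper: there, the relative $I$-functions of $(Z_\pm,D_{Z,\pm})$ are written down via the relative mirror theorem and orbifold quantum Lefschetz, converted to $H$-functions, and their coefficients are matched term-by-term under the change of variables (\ref{change-of-variables}), which is precisely your reduction to Theorem \ref{thm-intro-toric-snc} combined with the exchange symmetry in $(L_1,L_2)$. The ``hypergeometric exchange identity'' you single out as the hardest step is in fact immediate once the $H$-functions are written as in the paper, since after stripping off the $\hat{\Gamma}$-class both the complete-intersection role and the relative-divisor role of each $L_i$ contribute the identical $\Gamma$-factor in the numerator (and $v_i\cdot d_+=v_i\cdot d_-$ because $E_i$ lies on the wall), the only asymmetry being carried by the state-space label $[\mathbf{1}]_{\vec d_\pm}$, which is exactly how Theorem \ref{thm-exchange-divisor} is stated.
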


 Theorem \ref{thm-exchange-divisor} provides lots of examples that were not considered in previous works. For example, one can consider Gromov--Witten theories of $\mathbb P^3$ and a smooth quartic threefold $Q_4$ relative to their smooth anticanonical $K3$ surfaces respectively. Note that $\mathbb P^3$ and the quartic threefold are both hypersurfaces of the same ambient toric variety $\mathbb P^4$. There is no wall-crossing in this example: $X_+=X_-=\mathbb P^4$, $L_1=\mathcal O_{\mathbb P^4}(1)$ and $L_2=\mathcal O_{\mathbb P^4}(4)$. The mirrors to $(\mathbb P^3, K3)$ and $(Q_4,K3)$ are families of $M_2$-polarized $K3$ surfaces. It is not difficult to see that the classical periods for their Landau--Ginzburg mirrors (Laurent polynomials) are related by a change of variables. See for example \cite{CCGK} for a list of classical periods for Fano threefolds. See also Example \ref{ex-exchange-divisor}.

\subsubsection{Case (2)}

We first consider relative Gromov--Witten invariants with maximal contact orders. By the local-relative correspondence of \cite{vGGR} (see also \cite{TY20b}), these invariants of the smooth pair $(X,D)$ coincide (up to some constant factors) with local Gromov--Witten invariants of $\mathcal O_{X}(-D)$. Therefore, we just need to compare these local invariants. Then it becomes similar to the approach in \cite{MS} where the authors study Gromov--Witten theory under extremal transitions using quantum Serre duality. The varieties related by transitions in \cite{MS} are hypersurfaces in toric varieties where the ambient toric varieties are related by toric blow-ups.

We focus on the type of toric birational transformations called discrepant resolutions. With a mild generalization of \cite{MS} to discrepant resolutions, we prove the following statement in terms of quantum D-module.

\begin{theorem}[=Theorem \ref{thm-narrow-qd}]\label{thm-intro-narrow-qd}
After analytic continuation, the monodromy invariant part of the narrow quantum D-module of $(X_+,D_+)$ around $y^\prime=0$, when restricted to $y^\prime=0$, contains  the narrow quantum D-module of $(X_-,D_-)$ as a subquotient.
\end{theorem}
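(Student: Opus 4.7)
The plan is to deduce the theorem by combining the local-relative correspondence with a toric Mellin--Barnes analytic continuation in the style of Mi--Shoemaker \cite{MS}. The overall strategy is to trade the relative quantum D-module for a twisted absolute one, analytically continue the associated hypergeometric $I$-functions across the wall, and then extract the $X_-$ piece as a subquotient after specialization at $y' = 0$.

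The first step is to invoke the local-relative correspondence of \cite{vGGR, TY20b} to identify, for each sign, the narrow (i.e., maximal contact order) relative $H$-function of $(X_\pm, D_\pm)$ with the $I$-function of the local Gromov--Witten theory of the total space $\mathcal{O}_{X_\pm}(-D_\pm)$, up to the hypergeometric modifications appearing in \cite{FTY, TY20b}. After this reduction, the theorem becomes a statement comparing two Euler-twisted $I$-functions living on $X_+$ and $X_-$. The narrow quantum D-module is then reconstructed from this $I$-function via Givental's formalism, so a comparison statement on $I$-functions suffices to produce a corresponding D-module comparison.

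Next, because $X_+ \dashrightarrow X_-$ is realized by a single toric wall-crossing of the form $[\mathbb{C}^m \sslash_{w_\pm} K]$, the twisted $I$-functions are explicit hypergeometric series of GKZ type. They admit Mellin--Barnes contour integral representations as in \cite{CCIT09, CIJ, MS}, and the analytic continuation across the wall is carried out by deforming the contour. The residue sum on the $X_-$ side contains an identifiable block that reproduces the twisted $I$-function on $X_-$, while the remaining residues involve the coordinate $y'$ parametrizing the direction transverse to the wall. Restricting to $y' = 0$ and passing to the monodromy invariant subspace kill all but the distinguished block, realizing the narrow quantum D-module of $(X_-, D_-)$ as a subquotient of the monodromy invariant part of the analytically continued narrow quantum D-module of $(X_+, D_+)$ at $y'=0$.

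The principal obstacle is the discrepant nature of the wall-crossing. When $f_+^* K_{X_+} \neq f_-^* K_{X_-}$ on the ambient stacks and only the log canonical classes are preserved, the twisted $I$-functions carry additional hypergeometric factors whose asymptotic behavior along the Mellin--Barnes contour differs from the crepant case treated in most of the literature. These factors do not assemble into a symplectic isomorphism between the two Lagrangian cones, as they would under a crepant wall-crossing, but instead force only an inclusion of a subquotient. The mild generalization of \cite{MS} referred to in the introduction consists precisely in tracking these discrepant factors through the residue computation and verifying that, after restriction to $y' = 0$ and taking monodromy invariants, the $X_-$ narrow quantum D-module is faithfully recovered as a subquotient rather than as a full summand.
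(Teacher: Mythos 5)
Your opening reduction via the local-relative correspondence to the local theories of $T_\pm=\mathcal O_{X_\pm}(-D_\pm)$ is exactly the paper's first step, but the heart of your argument --- a Mellin--Barnes continuation of the twisted $I$-functions across a \emph{discrepant} wall, with ``additional hypergeometric factors'' merely degrading the symplectic identification to a subquotient --- does not go through as written, and it is not what the paper does. For a genuinely discrepant wall-crossing the $I$-function on one side has an irregular singularity at the relevant limit point (zero radius of convergence in the wall-crossing variable), so there is no direct analytic continuation by contour deformation; this is precisely the obstruction that forces \cite{AS} to use regularization and asymptotic expansions, as the paper itself remarks. The structural fact your proposal misses is that the Euler twist \emph{removes} the discrepancy: presenting $T_+$ as the GIT quotient $[\mathbb C^{m+1}\sslash_{\omega_+}K]$ with charge matrix $(D_1,\ldots,D_m,-D)$, where $D=\sum_{i\in M_+\cup M_-}D_i$, the sum of the charges lies on the wall, so the induced wall-crossing upstairs is a flop (Proposition \ref{prop-wall-crossing-local}). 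The continuation actually used is therefore the ordinary crepant one of \cite{CIJ}, giving a genuine symplectic transformation of Lagrangian cones; no discrepant asymptotics need to be ``tracked through the residues.''

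Consequently the subquotient and the variable $y^\prime$ have a different origin from the one you assign them. Crossing the wall into the chamber of $\omega_-$ does not produce $T_-$ but a partial compactification $\overline{T}_-$ of $T_-$, because extra cones appear in the fan (Propositions \ref{prop-cone-T-} and \ref{prop-compactification}); the coordinate $y^\prime$ is dual to the extra curve-class direction in $\on{NE}(\overline{T}_-)=\on{NE}(T_-)\times\mathbb R_{\geq 0}(0,-1)$, not simply a coordinate ``transverse to the wall.'' The passage from $\overline{T}_-$ to $T_-$ --- restriction to $y^\prime=0$, taking monodromy invariants, and pulling back along the open inclusion $i\colon T_-\hookrightarrow\overline{T}_-$ --- is exactly the content of the mild generalization of \cite{MS}*{Theorem 1.3} recorded as Theorem \ref{thm-specialization}, and it is there (not in any failure of the symplectomorphism) that the subquotient arises. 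So the correct logical order is: crepant continuation from $T_+$ to $\overline{T}_-$, then the Mi--Shoemaker specialization from $\overline{T}_-$ to $T_-$, then the local-relative correspondence; your sketch collapses the first two steps into a single discrepant continuation, and that step, as stated, fails.
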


\begin{remark}
One can also obtain a result when the birational transformation is crepant. Then the result will follow from the crepant transformation conjecture of \cite{CIJ} and the $I$-functions are simply related by analytic continuations.
\end{remark}

In order to use the local-relative correspondence, we focus on the relative invariants with one relative marking of maximal contact order. In general, we would like to remove the restriction of maximal contact orders. Therefore, we consider the orbifold Gromov--Witten invariants of root stack $X_{D,r}$. In our set-up of Case (2), the root stack $X_{D,r}$ is not a toric stack in general. However, by the hypersurface construction of \cite{FTY}, the root stack $X_{D,r}$ is a hypersurface of $Y_{X_{\infty},r}$, where $Y=\mathbb P_X(\mathcal O(-D)\oplus \mathcal O)$ is a toric stack and $Y_{X_{\infty},r}$ is the $r$-th root of $Y$ along the infinity divisor $X_{\infty}$. Then the toric stacks $Y_+$ and $Y_-$ are related by two wall-crossings. The first wall-crossing is crepant and the second wall-crossing is discrepant. By taking the limit $r\rightarrow \infty$ and applying analytic continuation to relative $I$-functions we have the following. 

\begin{theorem}[=Theorem \ref{thm-sm-extended}]
If $\phi: \xymatrix{
X_+\ar@{-->}[r]& X_-}$ is a discrepant resolution induced from a toric wall-crossing, the (extended) $H$-function of $(X_+,D_+)$ can be analytically continued to the $H$-function of $(X_-,D_-)$ under the specialization of a variable $y^\prime=0$. 
\end{theorem}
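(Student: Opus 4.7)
The plan is to lift the question, via the hypersurface construction of \cite{FTY}, to a statement about absolute $I$-functions of a pair of toric Deligne--Mumford stacks, so that both the toric wall-crossing machinery and the crepant transformation theorem of \cite{CIJ} become directly applicable.

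First, set $Y_\pm := \mathbb{P}_{X_\pm}(\mathcal{O}(-D_\pm)\oplus \mathcal{O})$ with infinity divisor $X_{\infty,\pm}$, and form the root stacks $Y_{\pm,X_{\infty,\pm},r}$. By the hypersurface construction of \cite{FTY}, the root stack $X_{\pm,D_\pm,r}$ embeds as a smooth hypersurface in $Y_{\pm,X_{\infty,\pm},r}$, cut out by a section of a line bundle pulled back from $X_\pm$. Therefore, by the twisted mirror theorem together with the $\hat{\Gamma}$-class correction, the relative $H$-function of $(X_\pm,D_\pm)$ at finite $r$ is obtained from the absolute twisted $I$-function of $Y_{\pm,X_{\infty,\pm},r}$. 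This reduces the problem to comparing the absolute $I$-functions of $Y_{+,X_{\infty,+},r}$ and $Y_{-,X_{\infty,-},r}$.

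Next, I would factor the birational relation on the $Y$-side into its two constituent toric wall-crossings. The toric data of $Y_\pm$ sits in an extended GIT quotient of $\mathbb{C}^{m+2}$ by $K\times \mathbb{C}^*$, and its stability polyhedron contains (at least) two walls separating $Y_+$ from $Y_-$. One wall concerns the extra $\mathbb{C}^*$-direction coming from the $\mathbb{P}^1$-bundle construction; because the fibre weights are $(-D,0)$, this wall-crossing is crepant. The other is the original toric wall-crossing inherited from $X_+\dashrightarrow X_-$, which is discrepant. For the crepant wall I would apply \cite{CIJ} to obtain a direct analytic continuation of toric $I$-functions. For the discrepant wall, I would use the explicit toric mirror theorem combined with the analytic continuation of Mellin--Barnes type already carried out in the proof of Theorem \ref{thm-intro-narrow-qd}; here the continuation becomes an equality only after setting to zero an auxiliary K\"ahler parameter $y'$ associated with the exceptional curve class of that wall. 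Composing the two continuations yields the desired relation between the absolute $I$-functions, specialized at $y'=0$.

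To conclude, I would transport this comparison back through the twisted mirror theorem on each side--the twisting being by the same pulled-back line bundle $L_\pm$--to obtain the analogous comparison of relative $I$-functions, and hence of (extended) $H$-functions, of the root stacks $X_{\pm,D_\pm,r}$. Taking $r\to \infty$ using the limit constructions of \cite{FWY} and \cite{TY20c} recovers the extended $H$-functions of the pairs $(X_\pm,D_\pm)$. The main obstacle is verifying that the composition of the two analytic continuations, the $r\to\infty$ limit, the quantum Lefschetz twisting, and the specialization $y'=0$ all commute. Concretely, one must check that $y'=0$ does not land on a branch cut or pole of the continued $I$-function; that the twisting factors and their $\Gamma$-function asymptotics remain uniform in $r$ so that the limit exists on both sides of the wall simultaneously; and that the chamber of convergence in the secondary fan, after crossing the crepant wall, contains a neighborhood in which the discrepant continuation and the $r\to\infty$ limit can be performed in either order.
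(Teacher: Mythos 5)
Your overall route is the same as the paper's: use the hypersurface construction of \cite{FTY} to move the problem to the $\mathbb P^1$-bundle side, decompose the relation between $Y_+$ and $Y_-$ into two toric wall-crossings, continue analytically, specialize $y'=0$, and take $r\to\infty$ (the paper additionally passes, via orbifold quantum Lefschetz and quantum Serre duality \cite{Tseng}, to the total space $L_{r}$ of $p^*\mathcal O_Y(-1)$, so that the wall-crossing analysis is carried out for honest toric stacks rather than for twisted $I$-functions, but this is a cosmetic difference). However, two points in your plan do not match what actually happens and leave a genuine gap. First, the roles of the two walls and of the specialization are misallocated. In Proposition \ref{prop-proj-bundle-II} and the proof of Theorem \ref{thm-sm-extended}, crossing $\tilde W_1=\on{span}\{(W,0),(0,1)\}$ is a flop, handled by the continuation of \cite{CIJ}; crossing $\tilde W_2=\on{span}\{(W,0),(-D,1)\}$ is, at the level of the rooted stacks (equivalently $L_{\pm,r}$) for finite $r$, a \emph{flip} -- it only ``becomes crepant'' as $r\to\infty$ -- and is again treated by a CIJ-type Mellin--Barnes continuation, not by quoting Theorem \ref{thm-narrow-qd}. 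The specialization $y'=0$ is attached to neither continuation: it enters afterwards, because the stack obtained after the wall-crossings is a partial compactification of $L_{-,r}$ (Proposition \ref{prop-compactification}), and the comparison of their quantum D-modules is the MS-type statement Theorem \ref{thm-specialization}. Importing the continuation-plus-specialization from the proof of Theorem \ref{thm-narrow-qd} verbatim, as you propose for the ``discrepant wall'', would only reprove the narrow, maximal-contact statement, since that argument lives on the local models $T_\pm$ with no root structure; to reach the \emph{extended} $H$-function the continuation must be redone at the rooted level, which is precisely the content of Steps 2--3 of the paper's proof.

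Second, you list as an ``obstacle to be verified'' exactly the point that constitutes the substance of the argument: the compatibility of the two continuations and of the specialization with the limit $r\to\infty$. The paper settles this by tracking the factor $1/\Gamma\bigl(1+\bar{\tilde D}_{m+2}/r+(\tilde D_{m+2}\cdot d)/r\bigr)$ carried by the root divisor: it is untouched by the continuation across $\tilde W_1$ (that divisor lies on the wall), and a careful inspection of the computation in \cite{CIJ}*{Section 6.2} shows it still tends to $1$ after the continuation across $\tilde W_2$, so both continuations and the $y'=0$ specialization commute with $r\to\infty$. Without this uniform-in-$r$ control, the last step of your plan -- transporting the comparison back and taking the limit -- is unjustified, so as written the proposal is a correct outline of the strategy but not yet a proof.
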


\begin{remark}
One can consider a combination of Case (1) and Case (2) by allowing the irreducible components of the divisors to be either toric divisors or nef (not necessarily toric invariant) divisors. Then we will also have the analytic continuation between their relative $I$-functions.
\end{remark}

\subsection{Connection to other works}

\subsubsection{Logarithmic Gromov--Witten theory}
It is well-known that log Gromov--Witten invariants are birational invariance \cite{AW}.  Instead of comparing single invariants as in \cite{AW}, we take a structural approach by studying the genus zero generating functions ($J$-functions). This version of birational invariance for orbifold Gromov--Witten theory may suggest a possible relation between log and orbifold invariants on the level of generating functions.

\subsubsection{Transition}\label{sec:intro-trans}

In \cite{MS}, the authors compared genus zero Gromov--Witten theory of toric hypersurfaces related by extremal transitions arising from toric blow-ups. The quantum D-modules are related by analytic continuations and specializations of parameters. The rank reduction phenomenon was partially studied in \cite{Mi} for an example of cubic extremal transition in terms of FJRW theory. The general case has not been studied yet. We provide a partial explanation for the rank reduction phenomenon in general directly in terms of (relative/local) Gromov--Witten theory of the subvariety that is contracted under the transition.  

Let $\phi: \xymatrix{\tilde X\ar@{-->}[r]& X}$ be a toric blow-up. Let $\tilde D\subset \tilde X$ and $D\subset X$ be the divisors $D_+$ and $D_-$ chosen in Case (2). Then $\tilde D$ and $D$ are related by transitions. The main result of \cite{MS} (see also \cite{Mi}) can be stated in terms of $I$-functions as follows. There is an explicit degree-preserving linear transformation $L: H^*(\tilde D)\rightarrow H^*(D)$ such that
\[
I_{ D}(y)=\lim_{\tilde y_{r+1}\rightarrow 0} L\circ \bar I_{\tilde D}(\tilde y),
\]
where $\bar I_{\tilde D}(\tilde y)$ is obtained from $I_{\tilde D}(\tilde y)$ via analytic continuation. 

Let $S$ be the variety that gets contracted under the transition $\xymatrix{
\tilde D\ar@{-->}[r]& D}$. Then the local Gromov--Witten invariants of the normal bundle $N_{S/\tilde X}$ should account for the rank reduction phenomenon. We have the following.

\begin{theorem}[=Theorem \ref{thm-transition}]\label{thm-intro-transition}
The restriction of the $I$-function of $\tilde D$ to $S$ coincides with the $I$-function $I_{N_{S/\tilde X}}$. Under this identification, we have 
$I_{N_{S/\tilde X}}$
vanishes under the analytic continuation and specialization in \cite{MS}.
\end{theorem}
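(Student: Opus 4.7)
The plan is to establish the theorem in two stages: first prove the identification of $I$-functions, then track the vanishing through the analytic continuation of \cite{MS}. Both stages rely on the fact that, in the toric-blow-up setting, $\tilde D$ is cut out of the toric stack $\tilde X$ by a section of a nef line bundle, so quantum Lefschetz (in the form used in \cite{FTY} and \cite{TY20b}) expresses $I_{\tilde D}$ as an explicit hypergeometric modification of the toric $I_{\tilde X}$, and likewise $I_{N_{S/\tilde X}}$ is an explicit twisted $I$-function of $S$.

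For the first assertion, I would write $I_{\tilde D}(\tilde y)$ as the hypergeometric factor $\prod_k \cdots/\prod_k \cdots$ in $c_1(\cO_{\tilde X}(\tilde D))$ multiplied by $I_{\tilde X}(\tilde y)$, and then pull back everything along the inclusion $\iota: S \hookrightarrow \tilde X$ (which factors through $\tilde D$ since $S \subset \tilde D$). Because $c_1(\cO_{\tilde X}(\tilde D))|_S = c_1(N_{\tilde D/\tilde X}|_S)$ and the normal-bundle short exact sequence gives $N_{S/\tilde X} \simeq N_{\tilde D/\tilde X}|_S \oplus N_{S/\tilde D}$ (split, by toric transversality), the pulled-back Lefschetz factor is exactly the $N_{\tilde D/\tilde X}|_S$-twist of the appropriate $I$-function on $S$. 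The remaining toric hypergeometric factors of $I_{\tilde X}|_S$ are matched, via the toric description of the blow-up center, with the $N_{S/\tilde D}$-twist plus the untwisted $I_S$. Combining the two yields exactly the $N_{S/\tilde X}$-twisted $I$-function of $S$, that is, $I_{N_{S/\tilde X}}$.

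For the vanishing assertion, I would use the explicit form of the transformation $L$ and the analytic continuation from \cite{MS}. Among the Novikov/Kähler coordinates of $\tilde X$ there is a distinguished variable $\tilde y_{r+1}$ attached to the exceptional divisor of the toric blow-up $\tilde X \to X$. Because $S$ is the variety contracted by the transition, at least one line-bundle summand of $N_{S/\tilde X}$ pairs non-trivially with the exceptional curve class. In the hypergeometric expression for $I_{N_{S/\tilde X}}$ computed above, every term in the series with nontrivial degree along the normal directions of $S$ carries a strictly positive power of $\tilde y_{r+1}$, while the degree-zero term is killed by the twisting equivariant Euler class in the local $I$-function (this is the standard feature of local $I$-functions of bundles that are not convex on $S$). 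After applying $L$ and carrying out the analytic continuation of \cite{MS}, which does not introduce any negative powers of $\tilde y_{r+1}$, the specialization $\tilde y_{r+1} \to 0$ then annihilates $I_{N_{S/\tilde X}}$.

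The main obstacle is the bookkeeping in the second step: one must check, through the toric GIT combinatorics, that every monomial in the $N_{S/\tilde X}$-twisted hypergeometric series is of strictly positive $\tilde y_{r+1}$-weight under the basis change prescribed by $L$, and that the degree-zero piece really is annihilated by the Euler class rather than contributing a constant. Once this combinatorial compatibility is in place — mirroring the mechanism by which $\bar I_{\tilde D}(\tilde y)|_{\tilde y_{r+1}=0}$ reduces to $I_D(y)$ in \cite{MS} — the vanishing follows and the theorem is proved.
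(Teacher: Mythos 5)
Your first step (the identification) is essentially the computation the paper does: one writes the local/relative $I$-function attached to $(\tilde X,\tilde D)$ as an explicit toric hypergeometric series, sets the variables $\tilde y_1,\dots,\tilde y_{\mathrm r}$ to zero and pulls back to the exceptional locus, and reads off a twisted $I$-function. The only caveat is that the paper does this on the exceptional fiber $\mathbb P:=\mathbb P[w_1,\dots,w_k]$ and identifies the result with $I_{\mathcal O_{\mathbb P}(-1)\oplus\mathcal O_{\mathbb P}(1-\sum_{i=1}^k w_i)}$, passing from $S$ to $\mathbb P$ by quantum Serre duality; your claim that the restricted toric factors on the non-toric hypersurface $S$ reproduce ``the untwisted $I_S$'' plus the $N_{S/\tilde D}$-twist is stated too loosely (restriction of ambient classes only gives the ambient part, and the honest $I_S$ requires the same Serre-duality/Lefschetz maneuver), but this is a presentational issue rather than a fatal one.

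The genuine gap is in your vanishing argument. The specialization in \cite{MS} is $y_{\mathrm r+1}\to 0$ in the coordinate adapted to the $X_-$ side, i.e.\ it is applied to the \emph{analytically continued} series re-expanded around the other limit point; it is not the limit $\tilde y_{\mathrm r+1}\to 0$ in the original expansion, and via the change of variables the two points are essentially reciprocal to each other. Your mechanism --- every normal-direction term carries a positive power of $\tilde y_{\mathrm r+1}$, the degree-zero term is killed by the Euler class, the continuation introduces no negative powers of $\tilde y_{\mathrm r+1}$, hence setting $\tilde y_{\mathrm r+1}=0$ annihilates everything --- fails on both counts. The degree-zero term of the restricted local $I$-function is not zero (in the paper's cubic example the constant term of $I_{K_S}$ is $3H\,q^{H/z}\neq 0$; nonconvexity of the bundle does not kill it), so nothing vanishes at $\tilde y_{\mathrm r+1}=0$; and the analytic continuation is precisely a re-expansion in fractional negative powers of $\tilde y_{\mathrm r+1}$, i.e.\ in positive powers of $y_{\mathrm r+1}$, so ``no negative powers of $\tilde y_{\mathrm r+1}$'' is the wrong invariant to track. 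What the paper actually proves, and what your proposal never supplies, is a Mellin--Barnes computation: write the restricted series as a sum of residues, shift the contour to the other side, observe that the poles at negative integers contribute zero because their residues are multiples of a power of the hyperplane class $P$ exceeding $\dim\mathbb P$, so the only surviving poles are at $s=-P/z-m/d$ with $m\geq 1$; hence the continued function is a series in strictly positive powers of $y_{\mathrm r+1}$ with no constant term and dies under $y_{\mathrm r+1}\to 0$. Without this (or an equivalent) argument the second assertion of the theorem remains unproved.
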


%Note that the rank of the narrow quantum D-module of $N_{S/\tilde X}$ coincides with the difference between the ranks of the ambient quantum D-modules of $\tilde D$ and $D$. \textcolor{red}{Check???}

\begin{remark}
Theorem \ref{thm-intro-transition} also gives an explanation for the rank reduction phenomenon in Theorem \ref{thm-intro-narrow-qd}.
\end{remark}

\subsubsection{FJRW theory}

The Gromov--Witten theory of $N_{S/\tilde X}$ considered in Section \ref{sec:intro-trans} can be regarded as Gromov--Witten theory of Fano hypersurfaces relative to its smooth anticanonical divisor. On the other hand, the rank reduction phenomenon was interpreted in terms of FJRW theory in \cite{Mi} for an example of cubic extremal transition. It is natural to ask if this relative Gromov--Witten theory is directly related to the FJRW theory. 

The celebrated Landau--Ginzburg/Calabi--Yau (LG/CY) correspondence states that the Gromov--Witten theory of a Calabi--Yau variety can be identified with the FJRW theory of a singularity via analytic continuation. In genus zero, it has been proved in various cases in \cite{CR}, \cite{CIR}, \cite{LPS}, \cite{CR18}, \cite{Zhao}. The LG/CY correspondence has been generalized to the case of Fano or general type varieties \cite{Acosta} and \cite{AS}, where the authors proved the relation between Gromov--Witten theory of a Fano variety or a general type variety and the FJRW theory of a singularity. Such a relation is known as the LG/Fano (general type) correspondence. 

The proofs of the above genus zero LG/CY correspondence and the LG/Fano (general type) correspondence are based on mirror symmetry and the relation between their $I$-functions. Recall that one can understand mirror symmetry for Fano varieties as mirror symmetry for log Calabi--Yau pairs. In other words, the mirror of a Fano variety $X$ with its smooth anticanonical divisor $-K_X$ is a Landau--Ginzburg model. Therefore, it is natural to consider the \emph{LG/(log CY) correspondence} instead of the LG/Fano correspondence. By studying the relative $I$-function of $(X,-K_X)$ and the regularized $I$-function of the FJRW theory, we have the following.

\begin{theorem}\label{thm-intro-log-lg-cy}[=Theorem \ref{thm-log-lg-cy}]
The narrow relative quantum D-module of $(X,-K_X)$ can be identified with the narrow quantum D-module of $(W,G)$ via analytic continuation. 
\end{theorem}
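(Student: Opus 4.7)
The plan is to mimic, in the relative setting, the $I$-function approach to the LG/CY and LG/Fano correspondences developed by Chiodo--Ruan, Acosta, and Acosta--Shoemaker, using as input the relative mirror theorem of Fan--Tseng--You and the regularized FJRW $I$-function of Acosta--Shoemaker.

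First, I would write down the explicit relative $I$-function $I^{(X,-K_X)}(y,x)$ provided by the mirror theorem of \cite{FTY}, with $X$ a Fano toric complete intersection and $D=-K_X$ its smooth anticanonical divisor. This is a hypergeometric-type series taking values in a direct sum, indexed by contact order with $D$, of twisted cohomologies of $D$ together with $H^*(X)$. Converting through the $\hat{\Gamma}$-class transformation (Equation (\ref{H-I})) gives the relative $H$-function, whose Gamma-function factors are suitable for Mellin--Barnes manipulation. The narrow relative quantum D-module is cut out by the subspace of narrow sectors, i.e.\ classes pulled back from $H^*(X)$ together with the narrow/twisted pieces along $D$.

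Next, I would write down the regularized FJRW $I$-function of $(W,G)$ following \cite{AS}, restricted to the narrow state space, and perform Mellin--Barnes analytic continuation on the Gamma factors of $H^{(X,-K_X)}$ around the Landau--Ginzburg point. The absolute LG/Fano correspondence of \cite{AS} already identifies the narrow FJRW $I$-function with the analytic continuation of the $I$-function of the absolute theory up to residue/regularization terms; the key step here is to track how the extra relative variables $x$ and the contact-order grading transform under that same continuation and match them with the narrow FJRW grading by age. Because $I$-functions (after Birkhoff factorization to $J$-functions) determine the quantum D-module, an identification of narrow $H$-functions upgrades to an identification of narrow quantum D-modules, yielding the theorem.

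The main obstacle will be the bookkeeping of the relative sectors. In the absolute Fano case one already has to regularize non-analytic terms when crossing to the LG phase; in the relative case, the twisted sectors along $D$ produce additional hypergeometric contributions indexed by contact order, and one must show that these assemble precisely into the narrow FJRW contributions (or are killed when one restricts to the narrow relative quantum D-module). Concretely, the contact-order variable $x$ on the Fano side should correspond, under analytic continuation, to the age-grading variable on the FJRW side, and proving this correspondence cleanly is the heart of the argument; here the fact that $D$ is anticanonical (so that the Calabi--Yau condition on $D$ matches the Calabi--Yau condition defining $(W,G)$) is what makes the two hypergeometric series compatible term-by-term.
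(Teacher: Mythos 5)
Your overall philosophy (write the $H$-function in Gamma-function form, do a Mellin--Barnes continuation, compare with the regularized FJRW $I$-function, then upgrade to D-modules) is the right one, but two steps in your plan do not work as stated, and they are exactly where the paper's proof does something different. First, the \emph{narrow} relative quantum D-module is, by definition in this paper, the restriction to insertions from $\mathfrak H_0$, which by the local--relative correspondence (Theorem \ref{thm-local-orbifold}, together with \cite{vGGR}, \cite{TY20b}, \cite{BNTY}, and the relative--orbifold correspondence \cite{ACW}, \cite{TY18}) is identified with the narrow quantum D-module of the total space $K_X=\mathcal O_X(-D)$; it only sees invariants with a single relative marking of maximal contact order. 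So the extended variables $x$ and the contact-order grading that you place at the ``heart of the argument'' never enter: there is no correspondence ``contact order $\leftrightarrow$ FJRW age sector'' to prove. In the actual computation the narrow FJRW sectors $\phi_k$, $k\in\on{Nar}$, appear as the residue series at the fractional poles $s=-\tfrac{H}{2\pi i}-\tfrac{m}{d}$ (with $k\equiv m-1 \bmod d$) of the single factor $\Gamma(1+\tfrac{dH}{z}+ds)$ in the local $H$-function of $K_X$, while the residues at negative integers vanish; the continuation is carried out in the one K\"ahler variable $q$ with $\tau^d=q^{-1}$.

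Second, you cannot lean on the absolute LG/Fano correspondence as if it provided the analytic continuation ``up to residue/regularization terms.'' In the absolute Fano case the Picard--Fuchs operator has an irregular singularity at the Landau--Ginzburg point and the absolute $I$-function has zero radius of convergence there; \cite{Acosta} and \cite{AS} relate the two sides by asymptotic expansion and regularization, not by continuation. The entire point of Theorem \ref{thm-log-lg-cy} is that after replacing $X$ by the pair $(X,-K_X)$, equivalently by the local geometry $K_X$ (a log Calabi--Yau/crepant situation), the hypergeometric series becomes regular-singular and a genuine Mellin--Barnes continuation can be performed; the paper then does this explicitly by a contour-shift computation and matches the result term-by-term with Acosta's regularized FJRW $H$-function. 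That explicit continuation of $H_{K_X}$ --- the choice of contour separating the poles at $s=l$ from those at $s=-1-l$ and $s=-\tfrac{H}{2\pi i}-\tfrac m d-\bar f$, the vanishing of the integer-pole residues, and the identification of the fractional-pole residues with $H^{\on{reg}}_{\on{FJRW},k}$ --- is the substance of the proof, and in your proposal it is replaced by an appeal to a statement (absolute continuation to the LG point) that is false in the Fano setting and by a sector-matching mechanism that does not occur.
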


\subsection{Future directions}

Although our main results are based on toric wall-crossings, we expect similar results in more general setting. Our results also lead to many interesting questions to be explored in the future. We only mention some of them here.

\subsubsection{Birational invariance and log Gromov--Witten invariants}
We expect that the birational invariance of relative quantum cohomology that we study in this paper is true in general. Whenever one can construct the relevant $I$-functions, One can expect similar relations to hold.

The $I$-functions are related to the $J$-functions via mirror maps or generalized mirror maps \cite{Iritani07}. On the other hand, the (inverse) mirror map is expected to be the instanton corrections (see, for example, \cite{CLT}, \cite{CCLT} and \cite{CLLT}). It may be reasonable to expect that the orbifold Gromov--Witten invariants of $X_{D,\infty}$ together with the instanton corrections are birational invariance. The instanton corrections are open Gromov--Witten invariants. The open/closed duality predicts that these open Gromov--Witten invariants are equal to some relative/log Gromov--Witten invariants. 

It is also interesting to see if one can find a precise relation between the orbifold and log Gromov--Witten invariants through the property of birational invariance. This idea is also investigated in a recent work of Battistella--Nabijou--Ranganathan \cite{BNR22} via a different method.

\subsubsection{Regularized $I$-function}

In \cite{AS}, the authors studied discrepant toric birational transformations using regularized $I$-functions and asymptotic expansions. It might be interesting to see how the approach in \cite{AS} is related to our approach. It might also be interesting to study the relation with \cite{Iritani20}.

\subsubsection{The degeneration scheme and the crepant transformation conjecture}

As a general principle of the degeneration scheme, it should be possible to understand the crepant transformation conjecture in general by understanding the relative version of the crepant transformation conjecture of simpler targets. This also motivates the idea of understanding the degeneration scheme on the structural level.

\subsection*{Acknowledgements}
F.Y. is supported by the Research Council of Norway grant no. 202277. We would like to thank Qile Chen, Dhruv Ranganathan, Jørgen Vold Rennemo, Mark Shoemaker and Hsian-Hua Tseng for helpful discussions.

\section{Relative quantum cohomology}
In this section, we consider Gromov--Witten theory of a smooth projective variety $X$ relative to a simple normal crossings divisor $D=D_1+\ldots+D_n$. Instead of considering logarithmic Gromov--Witten theory of \cite{AC}, \cite{Chen},\cite{GS13} and \cite{ACGS}, we will consider the the formal Gromov--Witten theory of the infinite root stack $X_{D,\infty}$ defined in \cite{TY20c}. When the divisor $D$ is smooth (that is, set $n=1$), we recover the relative Gromov--Witten theory of \cite{FWY} and \cite{FWY19}.

The formal Gromov--Witten theory of infinite root stacks $X_{D,\infty}$ is defined as a ``limit" of orbifold Gromov--Witten theory of multi-root stacks $X_{D,\vec r}$. To simplify the presentation, we only write down the definition when $X$ is a smooth projective variety, but the definition in \cite{TY20c} also applies to smooth proper Deligne--Mumford stacks as mentioned in \cite{TY20c}. 

\subsection{Orbifold Gromov--Witten theory}

Given a smooth proper Deligne-Mumford stack $\mathcal X$ with projective coarse moduli space $X$. One can consider the moduli space $\bM_{0,l}(\mathcal X, \beta)$ of $l$-pointed, genus $g$, degree $\beta\in H_2(X)$ stable maps to $\mathcal X$.

The orbifold Gromov--Witten invariants of $\mathcal X$ are defined as follows
\begin{align}
\left\langle \prod_{i=1}^l \tau_{a_i}(\gamma_i)\right\rangle_{g,l,\beta}^{\mathcal X}:=\int_{[\bM_{g,l}(\mathcal X, \beta)]^{\on{vir}}}\prod_{i=1}^l(\on{ev}^*_i\gamma_i)\bar{\psi}_i^{a_i},
\end{align}
where, 
\begin{itemize}
\item $[\bM_{g,l}(\mathcal X, \beta)]^{\on{vir}}$ is the the virtual fundamental class of $\bM_{g,l}(\mathcal X, \beta)$. 
\item 
for $i=1,2,\ldots,l$,
\[
\on{ev}_i: \bM_{g,l}(\mathcal X,\beta) \rightarrow I\mathcal X
\]
is the evaluation map and $I\mathcal X$ is the inertia stack of $\mathcal X$.
\item  $\gamma_i\in H_{\on{CR}}^*(\mathcal X)$, where $H_{\on{CR}}^*(\mathcal X)$ is the Chen-Ruan orbifold cohomology of $\mathcal X$.
\item $a_i\in \mathbb Z_{\geq 0}$, for $1\leq i\leq l$.
\item
$\bar{\psi}_i\in H^2(\bM_{g,l}(\mathcal X, \beta),\mathbb Q)$
is the descendant class.
\end{itemize}

\subsection{Gromov--Witten theory of root stacks}
Let $r_1, ..., r_n\in \mathbb{N}$ be pairwise coprime, the multi-root stack $$X_{D, \vec{r}}:=X_{(D_1, r_1),...,(D_n, r_n)}$$ is a smooth Deligne--Mumford stack.

For any index set $I\subseteq\{1,\ldots, n\}$, we define 
\[
D_I:=\cap_{i\in I} D_i.
\]
Let 
\[
\vec s=(s_1,\ldots,s_n)\in \mathbb Z^n.
\] 
We define
\[
I_{\vec s}:=\{i:s_i\neq 0\}\subseteq \{1,\ldots,n\}.
\]

Consider the vectors
\[
\vec s^j=(s_1^j,\ldots,s_n^j)\in \mathbb Z^n, \text{ for } j=1,2,\ldots,m,
\]
which satisfy the following condition:
\[
\sum_{j=1}^m s_i^j=\int_\beta[D_i], \text{ for } i\in\{1,\ldots,n\}.
\]
For sufficiently large\footnote{By sufficiently large $\vec r$, we mean $r_i$ are sufficiently large for all $i\in\{1,\ldots,n\}$.} $\vec r$, we consider the moduli space $$\bM_{g,\{\vec s^j\}_{j=1}^m}(X_{D,\vec r},\beta)$$ of genus $g$, degree $\beta\in H_2(X)$, $m$-pointed, orbifold stable maps to $X_{D,\vec r}$ with orbifold conditions specified by $\{\vec s^j\}_{j=1}^m$. 

Let \begin{itemize}
    \item $\gamma_j\in H^*(D_{I_{\vec s^j}})$, for $j\in\{1,2,\ldots,m\}$;
    \item $a_j\in \mathbb Z_{\geq 0}$, for $j\in \{1,2,\ldots,m\}$.
\end{itemize}
Gromov-Witten invariants of $X_{D,\vec r}$ are defined as follows
\begin{align*}
    \left\langle \gamma_1\bar{\psi}^{a_1},\ldots, \gamma_m\bar{\psi}^{a_m} \right\rangle_{g,\{\vec s^j\}_{j=1}^m,\beta}^{X_{D,\vec r}}:=
    \int_{\left[\bM_{g,\{\vec s^j\}_{j=1}^m}(X_{D,\vec r},\beta)\right]^{\on{vir}}}\on{ev}_1^*(\gamma_1)\bar{\psi}_1^{a_1}\cdots\on{ev}_m^*(\gamma_m)\bar{\psi}_m^{a_m}.
\end{align*}
We define
\[
s_{i,-}:=\#\{j: s_i^j<0\}, \text{ for } i=1,2,\ldots, n.
\]
By \cite{TY20c}*{Corollary 16},
\[
\left(\prod_{i=1}^n r_i^{s_{i,-}}\right)\left\langle \gamma_1\bar{\psi}^{a_1},\ldots, \gamma_m\bar{\psi}^{a_m} \right\rangle_{g,\{\vec s^j\}_{j=1}^m,\beta}^{X_{D,\vec r}}
\]
is a polynomial in $r_1,\ldots, r_n$ when $\vec r$ is sufficiently large. It is constant in $\vec r$ when $g=0$ and $\vec r$ is sufficiently large.

The formal Gromov--Witten invariants of $X_{D,\infty}$ can be defined as follows.
\begin{defn}\label{defn:GW_inv}
Let \begin{itemize}
    \item $\gamma_j\in H^*(D_{I_{\vec s^j}})$, for $j\in\{1,2,\ldots,m\}$;
    \item $a_j\in \mathbb Z_{\geq 0}$, for $j\in \{1,2,\ldots,m\}$.
\end{itemize}
The formal Gromov--Witten invariants of $X_{D,\infty}$ are defined as
\begin{align*}
    \left\langle [\gamma_1]_{\vec s^1}\bar{\psi}^{a_1},\ldots, [\gamma_m]_{\vec s^m}\bar{\psi}^{a_m} \right\rangle_{g,\{\vec s^j\}_{j=1}^m,\beta}^{X_{D,\infty}}:=\left[\left(\prod_{i=1}^n r_i^{s_{i,-}}\right)\left\langle \gamma_1\bar{\psi}^{a_1},\ldots, \gamma_m\bar{\psi}^{a_m} \right\rangle_{g,\{\vec s^j\}_{j=1}^m,\beta}^{X_{D,\vec r}}\right]_{\prod_{i=1}^n r_i^0}
\end{align*}
for sufficiently large $\vec r$, where $[f(\vec r)]_{\prod_{i=1}^n r_i^0}$ of a polynomial $f(\vec r)$ in $r_1,\ldots,r_n$ means taking the constant term of the polynomial. When $f(\vec r)$ is a constant in $r_1,\ldots, r_n$, we have $[f(\vec r)]_{\prod_{i=1}^n r_i^0}=f(\vec r)$.  
\end{defn}
\begin{remark}
The formal Gromov--Witten theory of the infinite root stack $X_{D,\infty}$ was defined for the case when $X$ is a smooth projective variety. However, as mentioned in \cite{TY20c}, the generalization to the case when $X$ is a smooth Deligne--Mumford stack is straightforward. We just need to replace the cohomology rings in the state space by the relevant orbifold cohomology rings.   
\end{remark}

\begin{remark}
When $D$ has only one irreducible component, the formal Gromov--Witten theory of $X_{D,\infty}$ is simply the relative Gromov--Witten theory in \cite{FWY} and \cite{FWY19} which is a generalization of the classical relative Gromov--Witten theory of  \cite{LR}, \cite{IP},  \cite{Li1}, \cite{Li2} to allow negative contact orders. 
\end{remark}

\subsection{Relative quantum cohomology}

Following \cite{TY20c}, the state space of relative quantum cohomology of $X_{D,\infty}$ is defined as
\[
\mathfrak H:=\bigoplus_{\vec s\in \mathbb Z^n}\mathfrak H_{\vec s},
\]
where 
\[
\mathfrak H_{\vec s}:=H^*(D_{I_{\vec s}}).
\]

The pairing on $\mathfrak H$ 
\[
(-,-):\mathfrak H \times \mathfrak H\rightarrow \mathbb C
\]
is defined as
\begin{equation}\label{eqn:pairing}
\begin{split}
([\alpha]_{\vec s},[\beta]_{\vec s^\prime}) = 
\begin{cases}
\int_{D_{I_{\vec s}}} \alpha\cup\beta, &\text{if } \vec s=-\vec s^\prime;\\
0, &\text{otherwise. }
\end{cases}
\end{split}
\end{equation}
The pairing on the rest of the classes is generated by linearity.

Given a basis $\{T_{I,k}\}_k$ for $H^*(D_I)$, we can define a basis of $\mathfrak H$ as follows:
\[
\tilde{T}_{\vec s,k}=[T_{I_{\vec s},k}]_{\vec s}.
\]
Let $\{T_{I}^k\}$ be the dual basis of $\{T_{I,k}\}$ under the Poincar\'e pairing of $H^*( D_I)$. We define a dual basis of $\{\tilde T_{\vec s, k}\}$ under the pairing of $\mathfrak H$ as follows:
\[
\tilde{T}_{\vec s}^k=[T_{I_{\vec s}}^k]_{\vec s}.
\]
Let $ t=\sum t_{\vec s,k}\tilde{T}_{\vec s,k}$ where $t_{\vec s,k}$ are formal variables. The genus-zero potential for the Gromov-Witten theory of infinitely root stacks is defined to be
\[
\Phi_0( t)=\sum_{m\geq 3}\sum_{\beta}\frac{1}{m!}\langle t,\cdots,t\rangle_{0,m,\beta}^{X_{D,\infty}} q^{\beta}.
\]
The relative quantum product $\star$ is defined as
\[
\tilde{T}_{\vec s^1,k_1}\star \tilde{T}_{\vec s^2,k_2}=\sum_{\vec s^3,k_3}\frac{\partial^3 \Phi_0}{\partial t_{\vec s^1,k_1}\partial t_{\vec s^2,k_2}\partial t_{\vec s^3,k_3}}\tilde{T}_{-\vec s^3}^{k_3}.
\]

\subsection{Givental formalism and mirror theorem}
Consider the space
\[
\mathcal H=\mathfrak H \otimes_{\mathbb C}\mathbb C[\![\on{NE}(X)]\!](\!(z^{-1})\!),
\]
where $(\!(z^{-1})\!)$ means formal Laurent series in $z^{-1}$.

There is a $\mathbb C[\![\on{NE}(X)]\!]$-valued symplectic form
\[
\Omega(f,g)=\text{Res}_{z=0}(f(-z),g(z))dz, \text{ for } f,g\in \mathcal H,
\]
where the pairing $(f(-z),g(z))$ takes values in $ \mathbb C[\![\on{NE}(X)]\!](\!(z^{-1})\!)$ and is induced by the pairing on $\mathfrak H$.

We consider the following polarization
\[
\mathcal H=\mathcal H_+\oplus\mathcal H_-,
\]
where
\[
\mathcal H_+=\mathfrak H \otimes_{\mathbb C} \mathbb C[\![\on{NE}(X)]\!][z], \quad \text{and} \quad \mathcal H_-=z^{-1}\mathfrak H \otimes_{\mathbb C} \mathbb C[\![\on{NE}(X)]\!][\![z^{-1}]\!].
\]
There is a natural symplectic identification between $\mathcal H_+\oplus \mathcal H_-$ and the cotangent bundle $T^*\mathcal H_+$.

For $l\geq 0$, we write $ t_l=\sum\limits_{\vec s,k} t_{l;\vec s,k}\widetilde T_{\vec s,k}$ where $t_{l;\vec s,k}$ are formal variables. We write
\[
\mathbf t(z)=\sum\limits_{l=0}^\infty t_l z^l.
\]
The genus zero descendant Gromov-Witten potential of $X_{D,\infty}$ is defined as
\[
\mathcal F^0_{X_{D,\infty}}(\bt(z))=\sum\limits_\beta \sum\limits_{m=0}^\infty \dfrac{q^\beta}{m!} \left\langle\mathbf t(\bar\psi),\ldots,\mathbf t(\bar\psi)\right\rangle_{0,m,\beta}^{X_{D,\infty}}.
\]

Givental's Lagrangian cone $\mathcal L_{X_{D,\infty}}\subset \mathcal H=T^*\mathcal H_+$ is defined as the graph of the differential $d\mathcal F^0_{X_{D,\infty}}$.
In other words, a (formal) point in the Lagrangian cone can be explicitly written as
\[
-z+\mathbf t(z)+\sum\limits_{\beta} \sum\limits_{m} \sum\limits_{\vec s,k} \dfrac{q^\beta}{m!} \left\langle\dfrac{\tilde T_{\vec s,k}}{-z-\bar\psi},\mathbf t(\bar\psi),\ldots,\mathbf t(\bar\psi)\right\rangle_{0,m+1,\beta}^{X_{D,\infty}} \tilde T_{-\vec s}^k.
\]

\begin{defn}
We define the $J$-function $J_{X_{D,\infty}}(t,z)$ as follows,
\[
J_{X_{D,\infty}}(t,z)=z+t+\sum_{m\geq 1, \beta\in \on{NE}(X)}\sum_{\vec s,k}\frac{q^\beta}{m!}\left\langle\dfrac{\tilde T_{\vec s,k}}{-z-\bar\psi},t,\ldots, t\right\rangle_{0,m+1,\beta}^{X_{D,\infty}} \tilde T_{-\vec s}^k.
\]
\end{defn}

The $I$-function $I_{X_{D,\infty}}$ for $X_{D,\infty}$ is constructed in \cite{TY20b}*{Section 4} as a hypergeometric modification of the $J$-function of $X$:
\begin{align}\label{I-snc}
I_{X_{D,\infty}}(Q,t,z):=\sum_{\beta\in \on{NE}(X)} J_{X, \beta}(t,z)Q^{\beta}
\prod_{i=1}^n\prod_{0< a< d_i}(D_i+az)[\textbf{1}]_{ (-d_1,-d_2,\ldots, -d_n)}.
\end{align}
A mirror theorem for the infinite root stack $X_{D,\infty}$ can be stated as follows using Givental formalism.
\begin{thm}[\cite{TY20c}, Theorem 29]\label{thm:mirror}
Let $X$ be a smooth projective variety. Let $D:=D_1+D_2+...+D_n$ be a simple normal-crossing divisor with $D_i\subset X$ smooth, irreducible and nef. The $I$-function $I_{X_{D,\infty}}$, defined in \cite{TY20b}*{Section 4}, of the infinite root stack $X_{D,\infty}$ lies in Givental's Lagrangian cone $\mathcal L_{X_{D,\infty}}$ of $X_{D,\infty}$.
\end{thm}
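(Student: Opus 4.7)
The plan is to deduce Theorem \ref{thm:mirror} from the orbifold mirror theorem for the finite multi-root stacks $X_{D,\vec r}$ by passing to the limit $\vec r \to \infty$. The entire machinery of $X_{D,\infty}$ has been set up so that its genus zero invariants are extracted as the constant term in $\vec r$ of (normalized) orbifold invariants of $X_{D,\vec r}$ (Definition \ref{defn:GW_inv}, together with the polynomiality statement from \cite{TY20c}*{Corollary 16}). The strategy is to show that this same ``constant term in $\vec r$'' operation, applied to the orbifold $I$-function of $X_{D,\vec r}$, produces precisely the series (\ref{I-snc}), and that the Lagrangian cone condition survives the limit.

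First, I would invoke the orbifold mirror theorem for $X_{D,\vec r}$ when $\vec r$ is sufficiently large with pairwise coprime $r_i$. Under the nefness hypothesis on each $D_i$, one can write an explicit orbifold $I$-function $I_{X_{D,\vec r}}$ as a hypergeometric modification of $J_{X,\beta}$, and, by arguments in the spirit of \cite{FTY} and \cite{TY20b}, it lies in the orbifold Lagrangian cone $\mathcal L_{X_{D,\vec r}}$. Decomposed along the inertia stack of $X_{D,\vec r}$, each summand is labelled by a twisted sector of monodromy type $\bigl(k_1/r_1, \ldots, k_n/r_n\bigr)$ with $0 \leq k_i < r_i$, and carries a hypergeometric factor $\prod_i \prod_a (D_i + a z)$ where $a$ ranges over a set determined by $d_i := \int_\beta [D_i]$ and $r_i$, together with a class supported on the corresponding component of $IX_{D,\vec r}$.

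Second, I would take $\vec r \to \infty$ sector by sector. For each fixed $\vec s \in \mathbb{Z}^n$, the sector $\mathfrak H_{\vec s}$ of the state space of $X_{D,\infty}$ is matched, for every $\vec r$ with $r_i > |s_i|$, with a definite twisted sector of $X_{D,\vec r}$. The hypergeometric factor attached to $D_i$ collapses in the limit to $\prod_{0 < a < d_i}(D_i + a z)$: the would-be endpoints $a = 0$ and $a = d_i$ either cancel with Chen--Ruan normalization factors or combine with the monodromy multipliers $\prod_i r_i^{s_{i,-}}$ from Definition \ref{defn:GW_inv} to produce the correct insertion $[\mathbf{1}]_{(-d_1, \ldots, -d_n)}$ in $\mathfrak H$. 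Since the coefficients of the Lagrangian cone relation for $X_{D,\vec r}$ are polynomial (and in genus zero constant) in $\vec r$ for $\vec r$ large, extracting the constant term commutes with the cone condition, and so the resulting $I_{X_{D,\infty}}$ lies in $\mathcal L_{X_{D,\infty}}$.

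The main technical obstacle is the explicit book-keeping that identifies the orbifold hypergeometric factors at large $\vec r$ with the clean product $\prod_i \prod_{0 < a < d_i}(D_i + a z)$. One must distinguish, for each $D_i$, whether $d_i$ is positive, zero, or negative; track the twisted sector in which the contribution lives; and verify that the boundary terms at $a = 0$ and $a = d_i$ conspire with the normalization $r_i^{s_{i,-}}$ to produce both the correct hypergeometric product and the correct class in $\mathfrak H$ after taking the $\vec r^{\,0}$ coefficient. A closely related subtlety is to confirm, sector by sector, that the leading (constant) coefficient in $\vec r$ of the orbifold cone relation is free of residual $\vec r$-dependence, so that ``being on the cone'' descends cleanly to the infinite root stack without requiring any deformation of the cone itself.
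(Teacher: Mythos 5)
This theorem is not proved in the present paper at all---it is quoted verbatim from \cite{TY20c}*{Theorem 29}---and your strategy of first invoking the orbifold mirror theorem for the finite multi-root stacks $X_{D,\vec r}$ (itself supplied by the hypersurface constructions of \cite{FTY} and \cite{TY20b}) and then passing to the limit $\vec r\to\infty$ sector by sector, using the polynomiality/constancy in $\vec r$ of the genus-zero invariants to see that the Lagrangian-cone condition survives, is essentially the route taken in that cited source. So your proposal matches the original argument in approach; the book-keeping you flag (matching the twisted sectors of $X_{D,\vec r}$ with the sectors $\mathfrak H_{\vec s}$, and the eventual $\vec r$-independence of the genus-zero data) is precisely the content of the sector identification and of \cite{TY20c}*{Corollary 16} recalled before Definition \ref{defn:GW_inv}.
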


Recall that $ t=\sum t_{\vec s,k}\tilde{T}_{\vec s,k}$. If we set $ t_{\vec s,k}=0$ for $\vec s\neq \vec 0$, invariants in the $J$-function only have one relative marking. Hence, this relative marking must have maximal contact order. When $D_i$ are nef, these invariants are identified with local invariants of $\bigoplus_{i=1}^n\mathcal O_X(-D_i)$:

\begin{theorem}[\cite{BNTY}, Theorem 1.1]\label{thm-local-orbifold}
Let $\beta$ be a curve class of $X$ with $d_i:=D_i\cdot \beta>0$ for $i\in \{1,\ldots, n\}$. Consider $m$ marked points $x_1,\ldots,x_m$ and fix an ordered partition of the index set $\{1,\ldots,n\}$ into disjoint subsets $I_1,\ldots,I_m$ such that $\cap_{i \in I_j} D_i$ is nonempty for each $j \in \{1,\ldots,m\}$. The following identity is true:
\begin{align}
   &\left[\left(\cup_{j=1}^m \on{ev}_j^*(\cup_{i\in I_j}D_i)\right)\cap\bM_{0,m}(\oplus_{i=1}^n\mathcal O_X(-D_i),\beta)\right]^{\on{vir}}\\
   \notag =&\left(\prod_{i=1}^n(-1)^{d_i-1}\right)F_*[\bM_{0,0,\{(d_i)\}_{i\in I_1},\ldots, \{(d_i)\}_{i \in I_m}}(X_{D, \infty},\beta)]^{\on{vir}},
\end{align}
where 
$$F:\bM_{g,l,(d)}(X_{D,\infty},\beta)\to \bM_{g,l}(X,\beta)$$ obtained by forgetting the orbifold structure. 
\end{theorem}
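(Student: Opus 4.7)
The plan is to mimic the strategy of van Garrel--Graber--Ruddat \cite{vGGR}, who proved the smooth divisor case ($n=1$) of this identity, and extend it to the simple normal crossings setting via an iterated degeneration along the components $D_1, \ldots, D_n$. Since all contact orders on the orbifold side are positive and maximal, polynomiality \cite{TY20c}*{Corollary 16} (in particular, the constancy in $\vec r$ for $g=0$) allows us to replace the formal $X_{D,\infty}$ theory by the orbifold theory of the multi-root stack $X_{D,\vec r}$ for some fixed large $\vec r$. After this reduction, the claim becomes a cycle-level identity in $A_*(\bM_{0,m}(X,\beta))$, with the local virtual class on the left placed there via cosection localization onto the zero section.

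The key step is a degeneration of the local target $\oplus_{i=1}^n \mathcal O_X(-D_i)$ to the normal cone of $D_1$ inside it. The special fiber has two components: the original local target and the projective bundle $\mathbb P(\mathcal O_{D_1}(-D_1) \oplus \mathcal O_{D_1})$ pulled up with the restrictions $\oplus_{i\geq 2} \mathcal O_X(-D_i)|_{D_1}$ over $D_1$. Applying the degeneration formula, the insertion at the unique $j$ with $1 \in I_j$ (which factors as $\on{ev}_j^*[D_1]\cdot \on{ev}_j^*[\prod_{i\in I_j,\,i\neq 1}D_i]$) forces all contact of order $d_1$ with $D_1$ to concentrate at that marking. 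The rubber contribution from the $\mathbb P$-bundle is evaluated explicitly as in \cite{vGGR}, producing the factor $(-1)^{d_1 - 1}$ together with an orbifold/relative insertion along $D_1$. The remaining near-side contribution is a virtual class on the moduli of stable maps to $(\oplus_{i\geq 2}\mathcal O_X(-D_i), D_1)$ with the $D_1$ contact already orbifolded. Iterating the construction along $D_2, \ldots, D_n$ produces the prescribed contact vectors at every marking and accumulates the sign $\prod_i (-1)^{d_i-1}$, yielding the right-hand side.

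The main obstacle I anticipate is the interaction between components $D_i \cap D_j$ under iterated degenerations: once one has degenerated along $D_1$, the remaining components $D_i$ for $i \geq 2$ acquire new intersections with the exceptional divisor of the degeneration, and one must verify that subsequent degenerations along $D_i$ preserve the SNC geometry on every component of the expanded target, and that the insertions $[D_{I_j}]$ continue to cut out the correct strata at each stage, in particular when $|I_j|>1$ and multiple contact orders are forced simultaneously at the same marking. A cleaner alternative would be to perform a single simultaneous logarithmic expanded degeneration in the spirit of \cite{ACGS}, phrased directly as a log-rubber calculation on the log moduli of expansions; this would avoid the non-commutativity of iterated steps at the cost of substantially heavier technical overhead. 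In either approach, the nefness hypothesis on the $D_i$ is used to guarantee that $R\pi_* f^*\mathcal O_X(-D_i)$ has the expected behaviour in genus zero, so that the cosection localization and rubber integrals are well-behaved.
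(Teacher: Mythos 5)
This theorem is not proved in the paper at all: it is imported verbatim from \cite{BNTY}, and the argument there stays on the orbifold side throughout, reducing the simple normal crossings case to the smooth-pair local--orbifold correspondence applied one divisor at a time with (partial) multi-root stacks as targets, rather than degenerating the SNC geometry of $X$. Your proposal instead tries to reprove the statement by iterating the van Garrel--Graber--Ruddat degeneration to the normal cone along $D_1,\ldots,D_n$, and the iteration step is exactly where the argument breaks. After the first degeneration the near-side contribution is a \emph{relative} (log) theory of $(X,D_1)$ twisted by the remaining bundles; for a smooth divisor and maximal contact you may trade this for an orbifold marking, but when you then degenerate along $D_2$ you are degenerating the pair $(X,D_1)$, the rubber geometry over $D_2$ now contains $D_1\cap D_2$, and the vGGR rubber evaluation no longer applies verbatim: correction terms supported on the deeper strata appear. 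If one pretends they do not, the output of your iteration (or of the ``cleaner alternative'' via a single logarithmic expanded degeneration) is the naive local--log correspondence for the SNC pair, and that statement is known to be \emph{false} in general --- this is precisely the phenomenon isolated by Nabijou--Ranganathan, and it is the reason \cite{BNTY} formulates the correspondence with multi-root stack (orbifold) invariants, which differ from log invariants of SNC pairs. So the sentence ``Iterating the construction along $D_2,\ldots,D_n$ produces the prescribed contact vectors at every marking and accumulates the sign'' is not a technical loose end but the entire content of the theorem: to make it work one must carry the root-stack structure along the previously treated divisors into each subsequent degeneration (degeneration formula for orbifold targets, rubber computations over gerby divisors), and nothing in the proposal does this.

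A secondary gap: the statement you are asked to prove is a cycle-level identity in the Chow group of $\bM_{0,m}(X,\beta)$ (the left-hand side lives there because concavity identifies $\bM_{0,m}(\oplus_i\mathcal O_X(-D_i),\beta)$ with $\bM_{0,m}(X,\beta)$ equipped with an Euler-class obstruction, and the right-hand side is a pushforward $F_*$ of a virtual class), whereas \cite{vGGR} is an equality of numerical invariants obtained after integrating against the degeneration formula. Upgrading a degeneration-formula argument to an identity of classes with a specified comparison map $F$ requires additional input that the proposal does not supply; by contrast, the argument in \cite{BNTY} compares virtual classes directly, which is why the cycle-level statement (needed in this paper, e.g.\ for the identification of the narrow relative quantum $D$-module) comes out of it. In short, the reduction from $X_{D,\infty}$ to $X_{D,\vec r}$ via \cite{TY20c}*{Corollary 16} is fine, but the core of your proof is missing, and as written it argues toward the log-side statement that the orbifold theorem was designed to replace.
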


In particular, their $I$-functions were already identified in \cite{TY20b}*{Section 5.2}.

\subsection{Relative quantum D-module}

The Dubrovin connection for relative quantum cohomology is given by a collection of operators
\[
\nabla_{\vec s,k}=\nabla_{\frac{\partial}{\partial t_{\vec s,k}}}=\frac{\partial}{\partial t_{\vec s,k}}+\frac{1}{z}\tilde{T}_{\vec s,k}\star_t 
\]
and
\[
\nabla_{z\partial z}=z\frac{\partial }{\partial z}-\frac{1}{z}E\star_t+\mu,
\]
where  $E$ is the Euler vector field
\begin{align}
E:=c_1(T_X(-\log D))+\sum_{\vec s,k} \left(1-\frac 12 \deg(\tilde{T}_{\vec s,k})-\#\{i:s_i<0\}\right)t_{\vec s,k}\tilde{T}_{\vec s,k}
\end{align}
and the grading operator
\begin{align}\label{grading-operator}
\mu(\tilde{T}_{\vec s,k})=\left(\frac 12 \deg(\tilde{T}_{\vec s,k})+\#\{i:s_i<0\}-\frac 12 \dim_{\mathbb C}X\right)\tilde{T}_{\vec s,k}.
\end{align}
This meromorphic connection is flat by standard arguments in \cite{CK} and the WDVV equation in \cite{TY20c}.

The equation $\nabla f=0$ is called the quantum differential equation. We define 
\begin{align}
    L(t,z)\alpha:=\alpha+\sum_{d\in \on{NE}(X)_{\mathbb Z}, \vec s, k}Q^d\sum_{l\geq 0}\frac{1}{l!}\left\langle \frac{\alpha}{-z-\psi},t,\ldots,t,\tilde{T}_{\vec s,k}\right\rangle\tilde T_{-\vec s}^k,
\end{align}
where $\on{NE}(X)$ is the cone of effective curve classes and $\on{NE}(X)_{\mathbb Z}:=\on{NE}(X)\cap H_2(X,\mathbb Z)$.

\begin{proposition}
The operator $L(t,z)$ satisfies the following differential equations:
\begin{align}\label{qde-1}
    \nabla_{\vec s,k}L(t,z)\alpha=0,
\end{align}
and
\begin{align}\label{qde-2}
    \nabla_{z\partial z}L(t,z)\alpha=L(t,z)(\mu\alpha-\frac{\rho}{z}\alpha),
\end{align}
where $\alpha\in \mathfrak H$, $\rho:=c_1(T_X(-\log D))\in H^2(X)$ and $\mu$ is the grading operator (\ref{grading-operator}).
\end{proposition}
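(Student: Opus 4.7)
The plan is to prove the two differential equations by the classical method, adapted to the relative/infinite-root-stack setting of \cite{TY20c}. Both identities reduce to showing that certain natural operators annihilate $L(t,z)\alpha$, and this will follow from the topological recursion relation (TRR), the WDVV equation, and the divisor equation, all formulated for the formal Gromov--Witten theory of $X_{D,\infty}$.

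For equation (\ref{qde-1}), I would first apply $\partial/\partial t_{\vec s,k}$ to the definition of $L(t,z)\alpha$. Each derivative shifts one of the $t$-insertions in a correlator to $\tilde T_{\vec s,k}$, leaving a bracket of the form
\[
\left\langle \tfrac{\alpha}{-z-\bar\psi},\, t,\ldots,t,\, \tilde T_{\vec s,k}\right\rangle_{0,m+2,\beta}^{X_{D,\infty}}.
\]
I would then expand $\tfrac{1}{-z-\bar\psi}=\tfrac{1}{-z}\sum_{k\geq 0}\bigl(\tfrac{\bar\psi}{-z}\bigr)^k$ and apply the genus-zero TRR to the first marked point, splitting the relevant boundary divisor along the diagonal of $\mathfrak H$ with respect to the pairing (\ref{eqn:pairing}). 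Resumming using the dual basis $\{\tilde T^{k_3}_{-\vec s^3}\}$, the resulting sum collapses to $-\tfrac{1}{z}\,\tilde T_{\vec s,k}\star_t L(t,z)\alpha$, which is precisely the desired identity $\nabla_{\vec s,k}L(t,z)\alpha=0$.

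For equation (\ref{qde-2}), the argument rests on a dimension/degree count. Using the virtual-dimension formula for $\bM_{0,m}(X_{D,\infty},\beta)$, which picks up the shift $\#\{i:s_i<0\}$ in each contact order (compare the grading operator (\ref{grading-operator})), one shows that each correlator appearing in $L(t,z)\alpha$ is homogeneous of a definite degree in $(z,t,Q)$. This homogeneity produces an Euler-type identity in which $z\partial_z L(t,z)\alpha$ equals the combined action of the Euler vector field $E$ through the quantum product $E\star_t$, the grading operator $\mu$, and a contribution from the $q^\beta$-weight. The last contribution is controlled by the divisor equation applied to $\rho=c_1(T_X(-\log D))$, which pulls back from $X$ and pairs with a curve class $\beta$ via $\int_\beta\rho$; this accounts precisely for the $-\rho/z$ term. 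Reorganising yields $\nabla_{z\partial z}L(t,z)\alpha=L(t,z)(\mu\alpha-\tfrac{\rho}{z}\alpha)$.

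The main obstacle is justifying the TRR and divisor equation for correlators that may include relative markings of negative contact order. My plan is to establish these identities first on the finite multi-root stacks $X_{D,\vec r}$, where standard orbifold TRR, WDVV, and divisor equation apply, and then pass to the limit $\vec r\to\infty$ via Definition \ref{defn:GW_inv}. The polynomiality-in-$\vec r$ statement of \cite{TY20c}*{Corollary 16} ensures that the constant-term extraction commutes with these structural identities, so each of TRR, WDVV, and the divisor equation descends to the formal Gromov--Witten theory of $X_{D,\infty}$. Once these structural tools are available, (\ref{qde-1}) and (\ref{qde-2}) reduce to the standard derivations in \cite{CK}.
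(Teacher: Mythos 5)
Your proposal is correct and follows essentially the same route as the paper: the paper obtains (\ref{qde-1}) from the topological recursion relation for the Gromov--Witten theory of $X_{D,\infty}$ (citing \cite{TY20c}*{Proposition 24}) and deduces (\ref{qde-2}) from the homogeneity of the invariants together with (\ref{qde-1}), exactly as in \cite{Iritani09}*{Proposition 2.4}. The only difference is that you propose to re-derive the TRR and divisor equation by passing to the limit from the multi-root stacks $X_{D,\vec r}$, whereas the paper simply cites the TRR already established in \cite{TY20c} (which is proved there by precisely that limiting argument).
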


\begin{proof}
Equation (\ref{qde-1}) follows from the topological recursion relation for the Gromov--Witten theory of $X_{D,\infty}$ (\cite{TY20c}*{Proposition 24}).

The proof of Equation (\ref{qde-2}) is also identical to the corresponding equation for orbifold Gromov--Witten theory (see \cite{Iritani09}*{Proposition 2.4}). It follows from the homogeneity of Gromov--Witten invariants and Equation (\ref{qde-1}). 

\end{proof}
The operator $L(t,z)$  is called a fundamental solution to the quantum differential equation.

%\textcolor{red}{more details???}
\begin{proposition}
Set 
\[
z^{-\mu}z^{\rho}:=\exp(-\mu\log z)\exp(\rho \log z)
\]
Then we have
\begin{align}\label{identity-L}
    \nabla_{\vec s,k}\left(L(t,z)z^{-\mu}z^{\rho}\alpha\right)=0, \quad  \nabla_{z\partial z}\left(L(t,z)z^{-\mu}z^{\rho}\alpha\right)=0
\end{align}
and
\begin{align}\label{pair-L}
    (L(t,-z)\alpha, L(t,z)\beta)=(\alpha,\beta),
\end{align}
where $\alpha,\beta\in \mathfrak H$ and $(-,-)$ is the pairing (\ref{eqn:pairing}).
\end{proposition}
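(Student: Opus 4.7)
The plan is to prove (\ref{identity-L}) by a direct computation using Equations (\ref{qde-1})--(\ref{qde-2}) and the commutation relation $[\mu,\rho]=\rho$, and to prove (\ref{pair-L}) by showing the pairing is $t$-independent via the Frobenius property of $\star_t$ and then evaluating at a base point.

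For (\ref{identity-L}), the equation $\nabla_{\vec s,k}(Lz^{-\mu}z^{\rho}\alpha)=0$ is immediate from (\ref{qde-1}) because $z^{-\mu}z^{\rho}\alpha$ is independent of $t$. For $\nabla_{z\partial z}$, I would rewrite (\ref{qde-2}) as the operator identity
\[
z\partial_z L \;=\; (E\star_t/z)L-\mu L+L\mu-L\rho/z
\]
and apply Leibniz to obtain
\[
\nabla_{z\partial z}(Lz^{-\mu}z^{\rho}\alpha) \;=\; L\bigl[(\mu-\rho/z)(z^{-\mu}z^{\rho}\alpha)+z\partial_z(z^{-\mu}z^{\rho}\alpha)\bigr].
\]
A direct calculation using $[\rho, z^\rho]=0$ gives $z\partial_z(z^{-\mu}z^{\rho})=-\mu z^{-\mu}z^{\rho}+z^{-\mu}z^{\rho}\rho$, and the bracket collapses to $z^{-\mu}z^{\rho}\rho\alpha - z^{-1}\rho\,z^{-\mu}z^{\rho}\alpha$. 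Vanishing then reduces to the single commutation $z^{-\mu}\rho\, z^{\mu}=\rho/z$, which follows from $[\mu,\rho]=\rho$ (a consequence of $\rho\in H^2$ having cohomological degree $2$ and $\mu$ being the half-degree grading operator from \eqref{grading-operator}) via the BCH expansion $e^{-\mu\log z}\rho\,e^{\mu\log z}=\rho\,e^{-\log z}$.

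For (\ref{pair-L}), I would differentiate the pairing in the $t_{\vec s,k}$ direction. Using (\ref{qde-1}) in the form $\partial_{\vec s,k}L(t,z)\gamma = -z^{-1}\tilde T_{\vec s,k}\star_t L(t,z)\gamma$, one finds
\[
\partial_{\vec s,k}\bigl(L(t,-z)\alpha,L(t,z)\beta\bigr) \;=\; \tfrac{1}{z}\bigl[(\tilde T_{\vec s,k}\star_t L(t,-z)\alpha,L(t,z)\beta)-(L(t,-z)\alpha,\tilde T_{\vec s,k}\star_t L(t,z)\beta)\bigr],
\]
which vanishes by the Frobenius property $(A\star_t B,C)=(A,B\star_t C)$ of the pairing (\ref{eqn:pairing}) combined with commutativity of $\star_t$. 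Thus the pairing is $t$-independent. The divisor equation, which relates $t$-shifts along $H\in H^2$ with $Q^d$-rescalings by $e^{s\int_d H}$ up to a factor $e^{\pm sH/z}$ that cancels against its inverse under the pairing, then upgrades this to $Q$-independence. Evaluating at $t=0$ and $Q=0$, the operator $L(0,z)|_{Q=0}$ reduces to the identity (the $d=0,l=0$ contributions vanish because the genus-zero two-pointed degree-zero orbifold moduli spaces are unstable, and higher $l$ contributions disappear once $t=0$), yielding $(\alpha,\beta)$.

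The main potential obstacle is verifying the base-case reductions rigorously in the formal GW theory of $X_{D,\infty}$, namely $L(0,z)|_{Q=0}=\mathrm{id}$ and the divisor-equation promotion from $t$-independence to $Q$-independence; in the relative setting one must handle possible nontrivial orbifold data at the two markings, but both should follow from the stability/divisor-equation arguments available in \cite{TY20c}.
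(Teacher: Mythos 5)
Your proposal is correct and follows essentially the same route as the paper, which deduces (\ref{identity-L}) from (\ref{qde-1}), (\ref{qde-2}) and the differential equation $(z\partial_z+\mu-\rho/z)(z^{-\mu}z^{\rho}\alpha)=0$ (your $[\mu,\rho]=\rho$ computation is exactly the verification of that equation), and proves (\ref{pair-L}) by the same Frobenius-property argument for $t$-independence. Your expanded treatment of the initial condition (divisor-equation promotion to $Q$-independence and unstable two-pointed degree-zero moduli, valid in the formal theory of $X_{D,\infty}$ by \cite{TY20c}) just fills in what the paper leaves to the citation of \cite{Iritani09}*{Proposition 2.4}.
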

\begin{proof}
It follows from the proof of \cite{Iritani09}*{Proposition 2.4}. Equation (\ref{identity-L}) follows from (\ref{qde-1}), (\ref{qde-2}) and the differential equation
\[
(z\partial_z+\mu-\rho/z)(z^{-\mu}z^\rho\alpha)=0.
\]
It remains to show Equation (\ref{pair-L}). By (\ref{qde-1}) and the Frobenius property of the relative quantum product $\star$, we have
\begin{align*}
&\frac{\partial}{\partial t_{\vec s,k}}(L(t,-z)\alpha, L(t,z)\beta)\\
=&\frac{1}{z}(\tilde{T}_{\vec s,k}\star_t L(t,-z)\alpha, L(t,z)\beta)-\frac{1}{z}(L(t,-z)\alpha, \tilde{T}_{\vec s,k}\star_t L(t,z)\beta)\\
=&0
\end{align*}
Therefore, the pairing $(L(t,-z)\alpha, L(t,z)\beta)$ is constant in $t_{\vec s, k}$. The initial condition implies Equation (\ref{pair-L}).
\end{proof}

%\textcolor{red}{Check again....}

\begin{remark}
  The cone $\mathcal L_{X,D}$ is reconstructed as
  \[
  \mathcal L_{X,D}=\bigcup_{t}zL(t,-z)^{-1}\mathcal H_+.  
  \]
  The $J$-function is
  \[
  J(t,z)=L(t,z)^{-1}[\textbf{1}]_{\vec 0}.
  \]

\end{remark}

If $D_i$ is nef, for $1\leq i\leq n$, and $t$ is restricted to $\mathfrak H_0$, then, by Theorem \ref{thm-local-orbifold}, the restricted relative quantum $D$-module is identified with the narrow quantum $D$-module of $\bigoplus_{i=1}^n\mathcal O_X(-D_i)$ defined in \cite{Shoemaker18}. We will also refer to this part of the relative quantum $D$-module as the \emph{narrow relative quantum $D$-module} of $(X,D)$.

%\textcolor{red}{Check if it is a submodule...}

\section{Set-up}\label{sec:set-up}

We consider a birational transformation $\phi: X_+\dashedrightarrow X_-$ between smooth varieties/Deligne--Mumford stacks. Suppose there is a smooth variety/Deligne--Mumford stack $\tilde X$ with projective birational  morphisms $f_{\pm}:\tilde{X}\rightarrow X_{\pm}$ such that the following diagram commute  
\[
\begin{tikzcd}
 & \arrow[dl,swap]{}{f_+} \tilde X \arrow[dr]{}{f_-} \\
X_+  \arrow[rr,dashed]{}{\phi} && X_-.
\end{tikzcd}
\]
Then $X_+$ and $X_-$ are called $K$-equivalent if 
\[
f_+^*(K_{X_+})=f_-^*(K_{X_-}).
\]
The celebrated crepant transformation conjecture of Y. Ruan predicts that the quantum cohomology of $X_+$ and $X_-$ should be related by analytic continuation. 

The case when $X_+$ and $X_-$ are not $K$-equivalent is more subtle. There have been some works investigating how Gromov--Witten theory varies under discrepant transformations, see, for example \cite{Iritani20}, \cite{AS18} and \cite{AS}. The relation is usually more complicated than the crepant case. For example, in \cite{AS}, the authors use regularization and asymptotic expansion to study how genus zero Gromov--Witten theory varies under discrepant toric wall-crossing. We propose a different approach to this question by introducing divisors $D_+$ and $D_-$ associated to $X_+$ and $X_-$ respectively such that it becomes ``log-K-equivalent". In other words, we consider (smooth or simple normal crossings) divisors $D_+\subset X_+$ and $D_-\subset X_-$ such that
\begin{align}\label{identity-log-crepant}
f_+^*(K_{X_+}+D_+)=f_-^*(K_{X_-}+D_-).
\end{align}
We would like to claim that considering Gromov--Witten theories of the pairs can give us simpler relations and provide new insights and connections to previous works.

In the spirit of the crepant transformation conjecture, it is natural to ask: 
\begin{question}
What is the relation between the Gromov--Witten theory of $(X_+,D_+)$ and the Gromov--Witten theory of $(X_-,D_-)$?
\end{question} 

Up to this point, we have not yet specified which divisors that we would like to consider. Different choices of divisors can lead to different relations between their Gromov--Witten theories with different complexity. There are several natural choices of the divisors. 
\begin{itemize}
    \item The first natural choice is to consider simple normal crossing divisors 
\[
D_+=D{+,1}+\cdots+ D_{+,n_+}\subset X_+ \text{ and } D_-=D_{-,1}+\cdots+D_{-,n_-}\subset X_-
\]
which contain the loci of indeterminacy of $\phi$ and $\phi^{-1}$ such that Identity (\ref{identity-log-crepant}) holds.
\item Another natural choice is to find smooth divisors $D_+$ and $D_-$ such that Identity (\ref{identity-log-crepant}) holds. 
\end{itemize}

These are two very different choices, but we will see that they both lead to very interesting phenomena.  

In this paper, we focus on the genus zero invariants and study the relation between their relative quantum cohomology. The higher genus case is beyond the scope of this paper and will be studied in the future.

\begin{example}
Let $X_+$ be a blow-up of $X_-$ along a complete intersection center $D_{-,1} \cap\cdots \cap D_{-,n}$. We can choose
\begin{align}\label{ex-blow-up-D--}
D_-=D_{-,1}+\cdots + D_{-,n},
\end{align}
and
\begin{align}\label{ex-blow-up-D-+}
D_+=D_{+,1}+\cdots +D_{+,n}+E,
\end{align}
where $D_{+,i}$ are strict transforms of $D_{-,i}$ and $E$ is the exceptional divisor.

Instead of requiring the irreducible components of $D_\pm$ to consist of $D_{\pm,i}$ and the exceptional divisor, we can also simply let $D_+$ be another divisor that is linear equivalent to $D_{+,1}+\cdots +D_{+,n}+E$ and $D_-$ be another divisor that is linear equivalent to $D_{-,1}+\cdots + D_{-,n}$. In particular, we may choose $D_\pm$ to be smooth divisors if possible. Then, we can compare the relative Gromov--Witten theories of $(X_+,D_+)$ and $(X_-,D_-)$.  

It is natural to expect that different choices of divisors will lead to different relations between their relative quantum cohomology. If we choose $D_-$ and $D_+$ as in (\ref{ex-blow-up-D--}) and (\ref{ex-blow-up-D-+}), then we can expect a simpler relation between relative quantum cohomology of $(X_+,D_+)$ and $(X_-,D_-)$. If we choose $D_-$ and $D_+$ to be smooth divisors, we may expect the relation to be more complicated.
\end{example}

\section{Toric birational transformations}\label{sec:toric-birational}

\subsection{Toric set-up}

We consider toric Deligne--Mumford stacks given by the following GIT data:
\begin{itemize}
    \item a torus $K\cong (\mathbb C^\times)^{\mathrm{r}}$ of rank $\mathrm{r}$;
    \item the cocharacter lattice $\mathbb L=\Hom(\mathbb C^*,K)$ of $K$;
    \item a set of characters $D_1,\ldots, D_m\in \mathbb L^\vee=\Hom (K,\mathbb C^*)$ of $K$;
    \item a choice of a stability condition $\omega\in \mathbb L^\vee\otimes \mathbb R$.
\end{itemize}

The characters $D_1,\ldots,D_m$ define a map from $K$ to the torus $T=(\mathbb C^\times)^m$, hence induce an action of $K$ on $\mathbb C^m$. 

For $I\subset \{1,\ldots,m\}$, we write $\bar{I}$ for the complement of $I$. We define $\angle_I$ to be the following
\[
\angle_I:=\left\{\sum_{i\in I}a_iD_i| a_i\in \mathbb R_{>0}\right\}\subset \mathbb L^\vee \otimes \mathbb R.
\]
Let
\[
(\mathbb C^\times)^{I}\times (\mathbb C)^{\bar{I}}:=\left\{(z_1,\ldots,z_m)| z_i\neq 0 \text{ for } i\in I\right\}.
\]
For the stability condition $\omega\in \mathbb L^\vee \otimes \mathbb R$, we define the set of anticones
\[
\mathcal A_\omega:=\{I\subset \{1,\ldots,m\}| \omega \in \angle_I\}.
\]
and the open set
\[
\mathcal U_\omega:=\bigcup_{I\in \mathcal A_\omega}(\mathbb C^\times)^{I}\times (\mathbb C)^{\bar{I}}.
\]
The toric stack is defined as
\[
X_\omega:=[\mathcal U_\omega/K],
\]
where the square brackets here indicate that $X_\omega$ is the stack quotient of $\mathcal U_\omega$ by $K$.

We always assume that $\omega$ is in the nonnegative span $\sum_{i=1}^m \mathbb R_{\geq 0}D_i$ which ensures that $X_\omega$ is non-empty. The space of stability conditions has a wall and chamber structure. We define
\[
C_\omega:=\bigcap_{I\in \mathcal A_\omega}\angle_I\subset \mathbb L^\vee \otimes \mathbb R.
\]
Then $\omega\in C_\omega$. For $\omega^\prime\in C_\omega$, we have $X_\omega=X_{\omega^\prime}$. We also assume that $C_\omega$ is of maximal dimension. This ensures that $X_\omega$ is a Deligne--Mumford stack. The walls of $\sum_{i=1}^m \mathbb R_{\geq 0}D_i$ are formed by higher codimension $C_\omega$. The chamber $C_\omega$ is called the extended ample cone.

The GIT data are in one-to-one correspondence with (extended) stacky fans defined by \cite{BCS} (\cite{Jiang} for extended stacky fans).
Recall that an $S$-extended stacky fan consists of a quadruple $(N,\Sigma,\beta,S)$, where 
\begin{itemize}
\item
$N$ is a finitely generated abelian group; 
\item
$\Sigma \subset N_{\mathbb{R}}=N\otimes_{\mathbb{Z}}\mathbb{R}$ 
is a rational simplicial fan;
\item
$\beta:\mathbb{Z}^{m}\rightarrow N$ is a map given by $\{b_1,\cdots, b_{m}\}\subset N$, where $b_i=\beta(e_i)\in N$ are images of the $i$-th standard basis $e_i\in \mathbb Z^m$;
\item $S\subset \{1,\ldots, m\}$. The vectors $b_i$ for $i\in S$ are called extended vectors.
\end{itemize}

We denote the image of $b_i$ in $N\otimes \mathbb R$  as $\bar{b}_i$. For $i\in\{1,\ldots,m\}\setminus S$, $\bar{b}_i$'s are vectors determining the rays of the stacky fan. For $i\in S$, $\bar{b}_i$ lies in the support $|\Sigma|$ of the fan.

For the correspondence with the GIT data, we consider the fan sequence:
\begin{equation}\label{fan-seq}
0 \longrightarrow \mathbb{L}:=\text{ker}(\beta) \longrightarrow \mathbb{Z}^{m} \stackrel{\beta}{\longrightarrow} N\longrightarrow 0,
\end{equation}
where the map $\mathbb L\rightarrow \mathbb Z^m$ is given by $(D_1,\ldots,D_m)$.

For $I\subset\{1,2,\cdots,m\}$, let $\sigma_I$ be the cone generated by 
$\bar{b}_i$, for $i\in I$. Let $\overline{I}$ be the complement of $I$ in $\{1,2,\cdots,m\}$. Then the fan $\Sigma_\omega$ is defined as
\[
\Sigma_\omega:=\{\sigma_I|\overline{I}\in \mathcal A_\omega\}.
\]
The extended data $S$ is given by
\[
S:=\{i\in \{1,\ldots,m\}|\overline{\{i\}}\not\in \mathcal A_\omega\}.
\]
%\textcolor{red}{What assumption do we need for the stacky fan?}
The dual of the fan sequence (\ref{fan-seq}) is
\[
0\longrightarrow N^\vee\longrightarrow (\mathbb Z^m)^\vee\longrightarrow \mathbb L^\vee.
\]
It is called the divisor sequence. It induces the exact sequence
\[
0\longrightarrow N^\vee\otimes \mathbb R\longrightarrow \mathbb R^{m-|S|}\longrightarrow \mathbb L^\vee\otimes \mathbb R/\sum_{i\in S}\mathbb R D_i\longrightarrow 0.
\]
We have
\[
H^2(X_\omega,\mathbb R)\cong \mathbb L^\vee\otimes \mathbb R/\sum_{i\in S}\mathbb R D_i.
\]
Moreover, there is a canonical splitting
\begin{align}\label{splitting}
\mathbb L^\vee\otimes \mathbb R\cong H^2(X_\omega,\mathbb R)\oplus \sum_{i\in S}\mathbb R D_i
\end{align}
as described in \cite{Iritani09}*{Section 3.1.2}. Under the splitting (\ref{splitting}), we also have a splitting of the extended ample cone $C_\omega$:
\[
C_\omega\cong C_\omega^\prime\times \sum_{i\in S}\mathbb R D_i\subset H^2(X_\omega,\mathbb R)\oplus \sum_{i\in S}\mathbb R D_i,
\]
where $C_\omega^\prime$ is the ample cone. The dual cone of $C_\omega^\prime$ is called the Mori cone:
\[
NE(X_\omega)=C_\omega^{\prime,\vee}=\left\{d\in H_2(X_\omega;\mathbb R):\eta\cdot d \geq 0 \text{ for all } \eta\in C_\omega^\prime\right\}.
\]

We define the subset $\mathbb K\subset \mathbb L\otimes \mathbb Q$ as
\[
\mathbb K:=\left\{f\in \mathbb L\otimes \mathbb Q|\left\{i \in \{1,\ldots,m\}| D_i\cdot f\in \mathbb Z \right\}\in \mathcal A\right\}.
\]

By \cite{BCS}, components of the inertia stack $IX_\omega$ are indexed by elements of the set $\on{Box}(X_\omega)$, where
\[
\on{Box}(X_\omega)=\left\{ v\in N | \bar{v}=\sum_{i\not\in I}c_i\bar{b}_i\in N\otimes \mathbb R \text{ for some } I\in \mathcal A \text{ and } 0\leq c_i<1\right\}.
\]
There is an isomorphism 
\begin{align}\label{isom-K-Box}
\mathbb K/\mathbb L \cong \on{Box}(X_\omega)
\end{align}
given by
\[
[f]\in \mathbb K/\mathbb L \mapsto v_f=\sum_{i=1}^m \lceil -(D_i\cdot f)\rceil b_i\in N,
\]
where $\lceil \cdots\rceil$ is the ceiling function. That is, for a rational function $q$, $\lceil q\rceil$ is the smallest integer $n$ such that $q\leq n$. Similarly, we write $\lfloor q\rfloor$ for the floor function, that is, the largest integer $n$ such that $n\leq q$;  and $\langle q\rangle$ for the fractional part, that is $\langle q\rangle=q-\lfloor q\rfloor$. 

The isomorphism (\ref{isom-K-Box}) shows that components of $IX_\omega$ are indexed by elements of $\mathbb K/ \mathbb L$. The component of $IX_\omega$ corresponding to $f\in \mathbb K/\mathbb L$ is denoted by $X_\omega^f$. The age $\iota_f$ of the component $X_\omega^f\subset IX_\omega$ is $\sum_{i=1}^m \langle D_i\cdot f\rangle$. We write $\textbf{1}_{f}\in H^0( X_{\omega}^f)$ for the fundamental class of the twisted sector $X_{\omega}^f$.

\subsection{Wall crossing}\label{sec:wall-crossing}

Let $C_+$ and $C_-$ be chambers in $\mathbb L^\vee \otimes \mathbb R$ that are separated by a hyperplane wall $W$ such that $\overline{C_W}:=W\cap \overline{C_+}=W\cap\overline{C_-}$ is a facet of $\overline{C_+}$ and $\overline{C_-}$. We choose stability conditions $\omega_+\in C_+$ and $\omega_-\in C_-$ and set $X_+:=X_{\omega_+}$ and $X_-:=X_{\omega_-}$. Let $\phi:X_+\dashedrightarrow X_-$ be the birational transformation induced by the toric wall-crossing. Note that the birational transformation $\phi$ is crepant if $\sum_{i=1}^m D_i\in W$. Otherwise, it is discrepant. Let $e$ be a primitive generator of $W^\perp$ such that $\omega_+\cdot e>0$ and $\omega_-\cdot e<0$.

Let $\omega_0$ be in the relative interior of $W\cap \overline{C_+}=W\cap \overline{C_-}$. Consider
\[
\mathcal A_0:=\mathcal A_{\omega_0}=\{I\subset \{1,\ldots,m\}: \omega_0\in \angle_I\}.
\]

Set
\[
M_{\pm}:=\{i\in \{1,\ldots,m\}:\pm D_i\cdot e>0\};
\]
\[
M_0=\{i\in \{1,\ldots,m\}: D_i\cdot e=0\};
\]
\[
\mathcal A_0^{\on{thin}}:=\{I\in \mathcal A_0: I\subset M_0\};
\]
\[
\mathcal A_0^{\on{thick}}:=\{I\subset \mathcal A_0: I \cap M_+\neq 0, I\cap M_-\neq 0\}.
\]

\begin{lemma}[\cite{CIJ}*{Lemma 5.2}]
We have
\[
M_0\in \mathcal A_0^{\on{thin}};
\]
\[
\mathcal A_0=\mathcal A_0^{\on{thin}} \cup \mathcal A_0^{\on{thick}};
\]
\[
\mathcal A_{\pm}=\mathcal A_0^{\on{thick}}\cup \{I\cup J: \empty \neq J\subset M_{\pm},I\in \mathcal A_0^{\on{thin}}\}.
\]
\end{lemma}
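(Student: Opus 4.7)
The plan is to prove all three assertions by a single combinatorial technique: pair each decomposition $\omega = \sum_{i \in I} a_i D_i$ (with $a_i > 0$) against the primitive wall-normal $e$, and exploit that indices in $I \cap M_+$, $I \cap M_0$, and $I \cap M_-$ contribute positively, trivially, and negatively to $\omega \cdot e$. Throughout I will use repeatedly that $\omega_0 \cdot e = 0$ while $\omega_+ \cdot e > 0$ and $\omega_- \cdot e < 0$.

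For $M_0 \in \mathcal{A}_0^{\on{thin}}$, the inclusion $M_0 \subseteq M_0$ is automatic, so the only content is $\omega_0 \in \angle_{M_0}$. This I would extract from the hypothesis that $\omega_0$ lies in the relative interior of the facet $W \cap \overline{C_+}$ of $\overline{C_+}$: the characters $\{D_i : i \in M_0\}$ are exactly those lying on $W$ and generate this facet as a top-dimensional cone inside $W$, so $\omega_0$ is a strictly positive combination of them. For $\mathcal{A}_0 = \mathcal{A}_0^{\on{thin}} \cup \mathcal{A}_0^{\on{thick}}$, I take any $I \in \mathcal{A}_0$ and apply the pairing device: the identity
\[
0 \;=\; \omega_0 \cdot e \;=\; \sum_{i \in I \cap M_+} a_i (D_i \cdot e) + \sum_{i \in I \cap M_-} a_i (D_i \cdot e)
\]
has strictly positive first summand whenever $I \cap M_+ \neq \emptyset$ and strictly negative second summand whenever $I \cap M_- \neq \emptyset$, so either both intersections are empty ($I$ thin) or both are nonempty ($I$ thick).

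For the third assertion I focus on $\mathcal{A}_+$; the case of $\mathcal{A}_-$ is symmetric. For the inclusion $\supseteq$, an $I \in \mathcal{A}_0^{\on{thick}}$ admits a strictly positive decomposition of $\omega_0$ with at least one index in $M_+$ and one in $M_-$, and a small perturbation of those two coefficients produces such a decomposition of a nearby $\omega_+ \in C_+$, so $I \in \mathcal{A}_+$; for an element of the second form I take the explicit point $\omega_+ := \omega_0 + \epsilon \sum_{j \in J} D_j$ for small $\epsilon > 0$, which lies in $C_+$ (its $e$-pairing is positive) and tautologically in $\angle_{I_0 \cup J}$ whenever $I_0 \in \mathcal{A}_0^{\on{thin}}$ and $\emptyset \neq J \subseteq M_+$. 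For the inclusion $\subseteq$, I take $I' \in \mathcal{A}_+$ and pair a positive decomposition of $\omega_+$ with $e$, obtaining $\omega_+ \cdot e > 0$ and hence $I' \cap M_+ \neq \emptyset$. If additionally $I' \cap M_- \neq \emptyset$, then along a path $\omega_+ \to \omega_0$ inside the open chamber $C_+$ the positive decomposition persists, the $M_+$- and $M_-$-contributions to $\omega \cdot e$ cancel each other, and all coefficients remain strictly positive in the limit, so $I' \in \mathcal{A}_0^{\on{thick}}$. If instead $I' \cap M_- = \emptyset$, then setting $I_0 := I' \cap M_0$ and $J := I' \cap M_+$, the same limiting argument forces the $J$-coefficients to tend to zero (since $\omega_+ \cdot e \to 0$ and all their contributions are nonnegative) while the $I_0$-coefficients stay bounded away from zero, giving $I_0 \in \mathcal{A}_0^{\on{thin}}$ and $I' = I_0 \cup J$ of the required form.

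The main obstacle is the robustness step in the $\subseteq$ direction: I must justify that the openness of $C_+$ around $\omega_+$ implies strict positivity of the relevant coefficients in the $\omega_0$-limit. The key point is that if some $I_0$-coefficient (or some coefficient in the thick case) collapsed to zero in the limit, then $\omega_0$ would lie on the boundary rather than the relative interior of the appropriate face of $\angle_{I'}$; for $\omega_+$ generic in the open chamber $C_+$ near $\omega_0$ the condition $\omega_+ \in \angle_{I'}$ would then fail, contradicting the constancy of $\mathcal{A}_{\omega_+}$ on $C_+$. Once this is in place, everything else is straightforward chamber bookkeeping.
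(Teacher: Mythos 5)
First, a point of reference: the paper does not prove this lemma at all --- it is quoted from \cite{CIJ}*{Lemma 5.2} --- so your attempt has to be measured against the convex-geometric argument given there, not against anything in this paper. Your easy steps are fine: pairing a strictly positive representation with $e$ gives $\mathcal A_0=\mathcal A_0^{\on{thin}}\cup\mathcal A_0^{\on{thick}}$ and gives $I'\cap M_+\neq\emptyset$ for $I'\in\mathcal A_+$, and the perturbations $\omega_0+\epsilon D_{i_+}$ and $\omega_0+\epsilon\sum_{j\in J}D_j$ prove the inclusion $\supseteq$ (granting the local fact, which you use silently, that a small neighborhood of $\omega_0$ intersected with $\{x: x\cdot e>0\}$ lies in $C_+$). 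The genuine gap is exactly the step you flag as the main obstacle, and your proposed patch for it is not a valid argument. You claim that if some coefficient collapsed in the limit, then $\omega_0$ would lie on the boundary of (a face of) $\angle_{I'}$, and that then generic $\omega_+\in C_+$ near $\omega_0$ would fail to lie in $\angle_{I'}$. The second implication is false, and it fails precisely where you need it: whenever $I'\in\mathcal A_+$ has $I'\cap M_-=\emptyset$, the point $\omega_0$ \emph{necessarily} lies on the boundary of $\mathrm{Cone}(D_i:i\in I')$ (an all-positive representation would force $\omega_0\cdot e>0$), and yet the entire chamber $C_+$, which accumulates at $\omega_0$, is contained in $\angle_{I'}$; so boundary position of $\omega_0$ yields no contradiction with chamber-constancy, and if your implication were correct it would ``prove'' $\omega_0\in\angle_{I'}$ even in this case, which is wrong. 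Consequently ``all coefficients remain strictly positive in the limit'' and ``the $I_0$-coefficients stay bounded away from zero'' are unproven (representations along your path are non-unique, need not converge, and individual coefficients can genuinely degenerate). The same defect appears in your first claim: $\{D_i:i\in M_0\}$ do \emph{not} in general ``generate the facet'' (their cone can be strictly larger, e.g.\ all of $W$), and the statement actually needed --- that the facet is contained in $\mathrm{Cone}(D_i:i\in M_0)$ with $\omega_0$ in its relative interior --- is the nontrivial content and is asserted, not proved.

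The missing ingredient is the use of the hypothesis that $\omega_0$ is a relative interior point of the facet $F=W\cap\overline{C_+}=W\cap\overline{C_-}$: since $F$ spans $W$, any hyperplane supporting a cone containing $\overline{C_+}$ at the point $\omega_0$ must contain $F$ and hence equal $W$. With this, the inclusion $\subseteq$ closes quickly. In the thick case, pick $j\in I'\cap M_-$: the point $\omega_0-\epsilon D_j$ pairs positively with $e$ and is close to $\omega_0$, hence lies in $C_+\subseteq\angle_{I'}$, and adding back $\epsilon D_j$ gives $\omega_0\in\angle_{I'}$, so $I'\in\mathcal A_0^{\on{thick}}$. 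In the case $I'\cap M_-=\emptyset$, for every small $w\in W$ the point $\omega_0+w$ is a limit of points of $C_+\subseteq\angle_{I'}$, hence lies in the closed cone $\mathrm{Cone}(D_i:i\in I')$; pairing with $e$ annihilates the $M_+$-coefficients, so a whole $W$-neighborhood of $\omega_0$ lies in $\mathrm{Cone}(D_i:i\in I_0)$, forcing that cone to span $W$ and $\omega_0$ to lie in its relative interior, i.e.\ $I_0\in\mathcal A_0^{\on{thin}}$. A supporting-hyperplane (minimal face) argument of the same kind --- or simply the proof in \cite{CIJ} --- is what is needed for $M_0\in\mathcal A_0^{\on{thin}}$; your one-sentence appeal to the facet being generated by the $D_i$ lying on $W$ skips it.
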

Recall that the fan $\Sigma_\omega$ is defined as
\[
\Sigma_\omega=\{\sigma_I: \bar{I}\in \mathcal A_\omega\}.
\]
The fans corresponding to $X_+$ and $X_-$ are given by
\[
\Sigma_\pm=\{\sigma_I: \bar{I}\in \mathcal A_\pm\}.
\]
The $S$-extended data for $X_\pm$ are defined as
\[
S_\pm=\left\{i\in \{1,\ldots,m\}|\overline{\{i\}}\not\in \mathcal A_{\pm}\}\right\}.
\]
Following \cite{CIJ} and \cite{Iritani20}, there are three types of toric wall-crossings:
\begin{enumerate}
    \item[(I)] Flip/flop: $X_+$ and $X_-$ are isomorphic in codimension one.
    \item[(II)] Discrepant/crepant resolution: the morphism $X_+\rightarrow |X_-|$ or $X_-\rightarrow |X_+|$ contracts a divisor to a toric substack.
    \item[(III)] $|X_+|$ and $|X_-|$ are isomorphic, but the stack structures along a divisor are different or the gerbe structures are different.
\end{enumerate}

In terms of stacky fans, we have $S_0:=S_+\cap S_-\subset M_0$. Furthermore
\begin{enumerate}
    \item[(I)] For flip and flop, the rays are the same: $S_+=S_-$, $\# (M_+)\geq 2$ and $\#(M_-)\geq 2$.
    \item[(II)] For discrepant or crepant resolution, one either remove or add one ray. If it is removing one ray, then $S_-=S_+\sqcup \{i\}$, $M_-=\{i\}$ and $\#\{M_+\}\geq 2$. If it is adding one ray, then $S_+=S_-\sqcup \{i\}$, $M_+=\{i\}$ and $\#\{M_-\}\geq 2$. 
    \item[(III)] If $|X_+|$ and $|X_-|$ are isomorphic, then there exists $i_+,i_-\in \{1,\ldots, m\}$ such that $S_+=S_0\sqcup \{i_+\}$, $S_-=S_0\sqcup\{i_-\}$, $M_+=\{i_+\}$ and $M_-=\{i_-\}$.
\end{enumerate}

\begin{remark}
 For type (III), the birational transformations change gerbe structure or root structure. Absolute and relative Gromov--Witten theories under gerbe and root constructions have been studied a lot in the literature (e.g. \cite{AJT15}, \cite{TT}, \cite{TT19}, \cite{AF},\cite{ACW},\cite{TY16},\cite{TY18}, \cite{TY20c}.)
\end{remark}

\begin{remark}\label{rmk-circuit}
Recall that a circuit is a set of minimal linearly dependent vectors. Following \cite{BH} and \cite{GKZ} (see also \cite{CIJ}*{Remark 5.3}), a toric wall-crossing can also be described as a modification along the circuit $\{b_i: i\in M_+\cup M_-\}$. To turn the fan $\Sigma_+$ to $\Sigma_-$, One removes every cone $\sigma_I$ of $\Sigma_+$ such that $I$ contains $M_-$ but not $M_+$ and adds cones $\sigma_K$, where $K=(I\cup M_+)\setminus J$ for any non-empty subset $J\subset M_-$.
\end{remark}

By \cite{CIJ}*{Proposition 5.5}, the loci of indeterminacy of $\phi$ and $\phi^{-1}$ are the toric substacks
\begin{align}\label{indeterminacy-loci}
\bigcap_{j\in M_-}\{z_j=0\}\subset X_+, \quad \text{and } \bigcap_{j\in M_+}\{z_j=0\}\subset X_-.
\end{align}

Following \cite{CIJ}, there is a common toric blow-up $\tilde X$ of $X_+$ and $X_-$ that fits into the commutative diagram
\[
\begin{tikzcd}
 & \arrow[dl,swap]{}{f_+} \tilde X \arrow[dr]{}{f_-} \\
X_+  \arrow[rr,dashed]{}{\phi} && X_-.
\end{tikzcd}
\]
where $f_{\pm}:\tilde{X}\rightarrow X_{\pm}$ are projective birational. 
The blow-up $\tilde X$ is obtained as follows. Recall that we have
\[
\sum_{i=1}^m (D_i\cdot e)b_i=0
\]
by the fan sequence (\ref{fan-seq}). We consider the vector
\[
b_{m+1}=\sum_{i\in M_+} (D_i\cdot e)b_i=\sum_{i\in M_-}-(D_i\cdot e)b_i.
\]
Now, consider the fan sequence
\begin{equation}\label{fan-seq-blow-up}
0 \longrightarrow \mathbb{L}\oplus \mathbb Z \longrightarrow \mathbb{Z}^{m+1} \stackrel{\tilde{\beta}}{\longrightarrow} N\longrightarrow 0,
\end{equation}
such that the map $\tilde{\beta}: \mathbb Z^{m+1}\rightarrow N$ is given by $\{\beta_1,\ldots,\beta_m,\beta_{m+1}\}$ and the map $\mathbb L\oplus \mathbb Z\rightarrow \mathbb Z^{m+1}$ is given by $(\tilde {D}_1,\ldots,\tilde {D}_m,\tilde{D}_{m+1})$ where
\[   
\tilde D_i=\left\{
\begin{array}{ll}
      D_i\oplus 0 & \text{if $i\in\{1,\ldots,m\}$ and $D_i\cdot e\leq 0$} \\
      D_i\oplus (-D_i\cdot e) & \text{if $i\in\{1,\ldots,m\}$ and $D_i\cdot e> 0$} \\
      0\oplus 1 & \text{If $i=m+1$}.
\end{array} 
\right. \]
Let $\tilde{C}$ be the chamber containing the stability condition $\tilde{\omega}:=(\omega_0,-\varepsilon)$, where $\omega_0$ is in the relative interior of $W\cap \overline{C_+}=W\cap \overline{C_-}$ and $\varepsilon$ is a positive and sufficiently small real number. Then $\tilde X$ is the toric Deligne--Mumford stack given by the stability condition $\tilde{\omega}$. Note that the stacky fan of $\tilde X$ is obtained from the stacky fans of $X_+$ and $X_-$ by adding the extra ray $b_{m+1}$. 

The toric morphisms $f_{\pm}:\tilde {X}\rightarrow X_{\pm}$ are induced by natural maps of stacky fans. We also write $E:=\tilde {D}_{m+1}$ for the toric divisor of $\tilde X$ corresponding to the extra ray $b_{m+1}$.  It follows from \cite{CIJ}*{Proposition 6.21} that
\[
K_{\tilde X}=f_-^*K_{X_-}+\left(1+\sum_{i\in M_-} D_i\cdot e\right)E=f_+^*K_{X_+}+\left(1-\sum_{i\in M_+} D_i\cdot e\right)E.
\]
Then we have the following relation
\begin{lemma}[\cite{Iritani20}, Lemma 5.1]
Let $K_{X_{\pm}}$ be the canonical class of $X_{\pm}$, then we have
\[
f_+^*K_{X_+}=f_-^*K_{X_-}+\left(\sum_{i=1}^m D_i\cdot e\right)E.
\]
\end{lemma}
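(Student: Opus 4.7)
The proof will be essentially a one-line computation drawn from the two expressions for $K_{\tilde X}$ displayed immediately before the lemma, which come from \cite{CIJ}*{Proposition 6.21}. My plan is to equate the two formulas
\[
K_{\tilde X}=f_-^*K_{X_-}+\left(1+\sum_{i\in M_-} D_i\cdot e\right)E = f_+^*K_{X_+}+\left(1-\sum_{i\in M_+} D_i\cdot e\right)E
\]
and rearrange to isolate $f_+^*K_{X_+}-f_-^*K_{X_-}$. This gives
\[
f_+^*K_{X_+}-f_-^*K_{X_-} = \left(\sum_{i\in M_-} D_i\cdot e+\sum_{i\in M_+} D_i\cdot e\right)E.
\]

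To finish, I would invoke the decomposition $\{1,\dots,m\}=M_+\sqcup M_0\sqcup M_-$ recalled in Section \ref{sec:wall-crossing}, together with the defining property of $M_0$ that $D_i\cdot e=0$ whenever $i\in M_0$. Consequently, the terms indexed by $M_0$ contribute nothing, so
\[
\sum_{i\in M_+} D_i\cdot e+\sum_{i\in M_-} D_i\cdot e=\sum_{i=1}^m D_i\cdot e,
\]
which yields the claimed identity.

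There is no real obstacle here: the content of the lemma is entirely contained in the two canonical-class formulas cited from \cite{CIJ}, and the only thing one needs to check beyond algebraic manipulation is that $D_i\cdot e=0$ for $i\in M_0$, which is literally the definition of $M_0$. The lemma should be viewed as repackaging the difference $f_+^*K_{X_+}-f_-^*K_{X_-}$ in a form that makes the dependence on the wall $W$ (via $e$ and $\sum D_i\cdot e$) transparent, so that one can immediately read off that the transformation $\phi$ is crepant precisely when $\sum_{i=1}^m D_i\in W$, i.e.\ when the coefficient of $E$ vanishes.
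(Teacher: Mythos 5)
Your proof is correct and matches the paper's (implicit) argument: the paper derives the identity by equating the two expressions for $K_{\tilde X}$ quoted from \cite{CIJ}*{Proposition 6.21} and using that $D_i\cdot e=0$ for $i\in M_0$, exactly as you do. Nothing further is needed.
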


We define
\[
\mathbb K_\pm:=\left\{f\in \mathbb L\otimes \mathbb Q|\left\{i \in \{1,\ldots,m\}| D_i\cdot f\in \mathbb Z \right\}\in \mathcal A_\pm\right\}.
\]
Let $\tilde{\mathbb L}_\pm$ be the free $\mathbb Z$-submodule of $\mathbb L\otimes \mathbb Q$ generated by $\mathbb K_\pm$ and set
\[
\tilde{\mathbb L}^\vee_\pm=\Hom(\tilde(\mathbb L)_\pm,\mathbb Z)\subset \mathbb L^\vee.
\]
By \cite{CIJ}*{Lemma 5.8}, there is a decomposition
\[
\tilde{\mathbb L}^\vee_\pm=\left(H^2(X_\pm;\mathbb R)\cap \tilde{\mathbb L}^\vee_\pm\right)\oplus \bigoplus_{j\in S_\pm}\mathbb Z D_j.
\]
\if{
Set 
\[
l_\pm=\dim H^2(X_\pm;\mathbb R)=r-\#(S_\pm) \quad \text{and } l=r-1-\#(S_0).
\]
Note that $l\leq l_\pm$. We can choose an integral bases of $\tilde{\mathbb L}^\vee_\pm$:
\[
\{p_1^{\pm},\ldots,p_{l_\pm}^\pm\}\cup \{D_j|j\in S_\pm\}\subset \tilde{\mathbb L}^\vee_\pm,
\]
such that
\begin{itemize}
    \item $p_i^\pm$ lies in $\overline{C^\prime_\pm}\subset H^2(X_\pm;\mathbb R)$ for $1\leq i \leq l_\pm$;
    \item $p_i^+=p_i^-\in \overline{C^\prime_W}$ for $i=1,\ldots, l$.
\end{itemize}

For $d\in \mathbb L$, let $y^d$ denote the corresponding element in $\mathbb C[\mathbb L]$. We have an inclusion
\[
\mathbb C[C_+^\vee\cap \mathbb L]\hookrightarrow \mathbb C[y_1,\ldots,y_{l_+},\{x_j\}_{j\in S_+}]
\]
given by
\[
y^d \mapsto \prod_{i=1}^{l_+}y_i^{p_i^+\cdot d}\prod_{j\in S_+}x_j^{D_j\cdot d}.
\]
Similarly, we have another inclusion
\[
\mathbb C[C_-^\vee\cap \mathbb L]\hookrightarrow \mathbb C[\tilde y_1,\ldots,\tilde y_{l_-},\{\tilde x_j\}_{j\in S_-}]
\]
given by
\[
y^d \mapsto \prod_{i=1}^{l_-}\tilde y_i^{p_i^-\cdot d}\prod_{j\in S_-}\tilde x_j^{D_j\cdot d}.
\]
}\fi

We can choose integral bases of $\tilde{\mathbb L}^\vee_\pm$:
\[
\{p_1^{\pm},\ldots,p_{\mathrm{r}}^\pm\}\subset \tilde{\mathbb L}^\vee_\pm,
\]
such that
\begin{itemize}
    \item $p_i^\pm$ lies in $\overline{C^\prime_\pm}\subset H^2(X_\pm;\mathbb R)$ for $1\leq i \leq \mathrm{r}$;
    \item $p_i^+=p_i^-\in \overline{C^\prime_W}$ for $i=1,\ldots, \mathrm{r}-1$.
\end{itemize}
For $d\in \mathbb L$, let $y^d$ denote the corresponding element in $\mathbb C[\mathbb L]$. We have an inclusion
\[
\mathbb C[C_+^\vee\cap \mathbb L]\hookrightarrow \mathbb C[y_1,\ldots,y_{\mathrm{r}}]
\]
given by
\[
y^d \mapsto \prod_{i=1}^{\mathrm{r}}y_i^{p_i^+\cdot d}.
\]
Similarly, we have another inclusion
\[
\mathbb C[C_-^\vee\cap \mathbb L]\hookrightarrow \mathbb C[\tilde y_1,\ldots,\tilde y_{\mathrm{r}}]
\]
given by
\[
y^d \mapsto \prod_{i=1}^{\mathrm{r}}\tilde y_i^{p_i^-\cdot d}.
\]
The coordinates are related by change of variables
\begin{align}\label{change-of-variables}
\tilde y_i=\left\{\begin{array}{cc}
    y_iy_\mathrm{r}^{c_i} & 1\leq i \leq \mathrm{r}-1 \\
    y_\mathrm{r}^{-c} & i=\mathrm{r} 
\end{array}
\right.
\end{align}
where $c=-p_\mathrm{r}^+\cdot e/p_\mathrm{r}^-\cdot e\in \mathbb Q_{>0}$ and $c_i\in \mathbb Q$ are determined by the change of basis from $\{p_i^+\}$ to $\{p_i^-\}$. 

\section{Relative quantum cohomology for toric pairs}\label{sec:toric-pairs}

In this section we consider the divisors $D\pm\subset X_\pm$, where
\[
D_\pm=\sum_{i\in (M_+\cup M_-)\setminus S_\pm} \bar{D}_i.
\]
Then, by (\ref{indeterminacy-loci}), the loci of indeterminacy of $\phi$ and $\phi^{-1}$ are complete intersections of the irreducible components of $D_+$ and $D_-$. Furthermore,
\begin{lemma}
We have
\[
f_+^*(K_{X_+}+D_+)=f_-^*(K_{X_-}+D_-)
\]
\end{lemma}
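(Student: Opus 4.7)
The plan is to reduce both sides of the claimed identity to the \emph{same} explicit linear combination of divisors on $\tilde X$, by simplifying $K_{X_\pm}+D_\pm$ before pulling back. The key observation is that after adding $D_\pm$ to $K_{X_\pm}$, only divisors indexed by $M_0$ survive, and these behave transparently under the toric blow-up $f_\pm$.

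For a smooth toric Deligne--Mumford stack one has $K_{X_\pm}=-\sum_{i\notin S_\pm}\bar D_i$, the sum running over the rays of $\Sigma_\pm$. Combined with $\{1,\ldots,m\}=M_+\sqcup M_0\sqcup M_-$ and the definition of $D_\pm$, a direct cancellation on $M_+\cup M_-$ yields
\[
K_{X_\pm}+D_\pm \;=\; -\sum_{i\in M_0\setminus S_\pm}\bar D_i.
\]
Next I would compute $f_\pm^*\bar D_i$ for $i\in M_0$. Recall that $\tilde X$ is obtained by adjoining the ray $b_{m+1}=\sum_{i\in M_+}(D_i\cdot e)b_i=\sum_{i\in M_-}-(D_i\cdot e)b_i$, which lies in the relative interior of the cone of $\Sigma_+$ spanned by $\{b_j: j\in M_+\}$ and of the cone of $\Sigma_-$ spanned by $\{b_j: j\in M_-\}$. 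The standard pullback formula for a smooth toric blow-up gives $f_\pm^*\bar D_i=\tilde D_i+c_i^\pm E$, where $c_i^\pm$ is the coefficient of $b_i$ in the above expansion of $b_{m+1}$ and $E=\tilde D_{m+1}$. Since $M_0$ is disjoint from both $M_+$ and $M_-$, for every $i\in M_0$ one has $c_i^\pm=0$ and hence $f_\pm^*\bar D_i=\tilde D_i$. Equivalently, via the GIT description one notes that $D_i\cdot e=0$ forces $\tilde D_i=D_i\oplus 0\in \mathbb L^\vee\oplus\mathbb Z$, i.e.\ $\tilde D_i$ is literally the pullback of the character $D_i$.

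Finally I would verify $M_0\setminus S_+=M_0\setminus S_-$ by inspecting the three types of toric wall-crossings listed in Section~\ref{sec:wall-crossing}: in Type~I, $S_+=S_-$; in Type~II the symmetric difference $S_+\triangle S_-$ is a single element of $M_+\cup M_-$; in Type~III, $S_+\triangle S_-=\{i_+,i_-\}$ with $i_\pm\in M_\pm$. In every case $S_+\triangle S_-\subset M_+\cup M_-$ is disjoint from $M_0$, so $M_0\cap S_+=M_0\cap S_-$ (both equal $S_0$). Combining the three steps,
\[
f_+^*(K_{X_+}+D_+)=-\sum_{i\in M_0\setminus S_+}\tilde D_i=-\sum_{i\in M_0\setminus S_-}\tilde D_i=f_-^*(K_{X_-}+D_-).
\]

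The only non-formal input is the pullback computation in the second step. A useful consistency check is that summing $f_\pm^*\bar D_i=\tilde D_i+c_i^\pm E$ over all rays of $\Sigma_\pm$ and taking the difference reproduces the previously stated identity $f_+^*K_{X_+}=f_-^*K_{X_-}+\bigl(\sum_{i=1}^m D_i\cdot e\bigr)E$, since the $M_+$ contributions to $f_+^*$ give $\sum_{i\in M_+}(D_i\cdot e)E$ while the $M_-$ contributions to $f_-^*$ give $\sum_{i\in M_-}(D_i\cdot e)E$. This is the main potential pitfall: one should ensure the sign conventions of the two expansions of $b_{m+1}$ are used consistently, after which the whole argument is purely a bookkeeping exercise on the stacky fans.
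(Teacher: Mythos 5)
Your argument is correct, but it takes a genuinely different route from the paper: the paper disposes of the lemma in one line by citing \cite{CIJ}*{Proposition 6.21} (the comparison of $K_{\tilde X}$ with $f_\pm^*K_{X_\pm}$ and the pullback formulas for the toric divisors), whereas you avoid the discrepancy bookkeeping on $E$ altogether by first cancelling to get $K_{X_\pm}+D_\pm=-\sum_{i\in M_0\setminus S_\pm}\bar D_i$, then observing that divisors indexed by $M_0$ acquire no $E$-component under $f_\pm^*$ (the expansion of $b_{m+1}$ involves only $b_j$ with $j\in M_+\cup M_-$), and finally checking $M_0\cap S_+=M_0\cap S_-=S_0$ case by case from Section \ref{sec:wall-crossing}. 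This is more self-contained and makes transparent \emph{why} the identity holds (only the $M_0$-divisors survive, and they are insensitive to the modification), at the cost of redoing a small piece of \cite{CIJ}; the paper's proof is shorter but leaves the actual divisor calculus to the reference.

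One slip worth fixing, though it does not affect the main argument: you have the two cones interchanged. By (\ref{indeterminacy-loci}) the indeterminacy locus of $\phi$ in $X_+$ is $\bigcap_{j\in M_-}\{z_j=0\}$, so $b_{m+1}=\sum_{j\in M_-}(-D_j\cdot e)\,b_j$ lies in the relative interior of the cone of $\Sigma_+$ spanned by $\{b_j:j\in M_-\}$ (note $-D_j\cdot e>0$ there), while the expansion $b_{m+1}=\sum_{j\in M_+}(D_j\cdot e)\,b_j$ is the one relevant to $\Sigma_-$; indeed $\sigma_{M_+}$ is generally not a cone of $\Sigma_+$. Consequently your consistency check should read: the $M_-$-divisors contribute $\sum_{i\in M_-}(-D_i\cdot e)E$ to $f_+^*$ and the $M_+$-divisors contribute $\sum_{i\in M_+}(D_i\cdot e)E$ to $f_-^*$, whose difference (after the sign from $K_{X_\pm}=-\sum_{i\notin S_\pm}\bar D_i$) recovers $f_+^*K_{X_+}=f_-^*K_{X_-}+\bigl(\sum_{i=1}^m D_i\cdot e\bigr)E$; as you wrote it the signs do not match. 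Since for $i\in M_0$ the relevant coefficient vanishes under either labelling, the proof of the lemma itself stands.
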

\begin{proof}
This follows from \cite{CIJ}*{Proposition 6.21}.
\end{proof}

For each type of toric wall-crossing, we have
\begin{enumerate}
    \item[(I)] Flip/flop: 
    \[
    D_+=\sum_{i\in (M_+\cup M_-)} \bar{D}_i\subset X_+ \text{ and }D_-=\sum_{i\in (M_+\cup M_-)} \bar{D}_i\subset X_-.
    \]
    \item[(II)] Discrepant/crepant resolution: If $S_-=S_+\sqcup \{j\}$, $M_-=\{j\}$ and $\#\{M_+\}\geq 2$, then 
    \[
    D_+=\sum_{i\in (M_+\cup \{j\})} \bar{D}_i\subset X_+  \text{ and }D_-=\sum_{i\in M_+} \bar{D}_i\subset X_-.
    \]
    If $S_+=S_-\sqcup \{j\}$, $M_+=\{j\}$ and $\#\{M_-\}\geq 2$, then 
    \[
    D_+=\sum_{i\in M_-} \bar{D}_i\subset X_+ \text{ and }D_-=\sum_{i\in (M_-\cup \{j\})} \bar{D}_i\subset X_-.
    \]
    \item[(III)] root/gerbe construction: 
    \[
    D_+=\bar{D}_{i_-} \text{ and } D_-=\bar{D}_{i_+}.
    \]
\end{enumerate}

\subsection{The $I$-functions}
A Givental style mirror theorem for toric stacks is proved in \cite{CCIT15} and \cite{CCFK}. The $I$-function for a toric stack is the following.

\begin{defn}
The $I$-function of a toric stack $ X$ is an $H^*_{\on{CR}}(X)$-valued power series defined by
\[
I_{X}(y,z)=ze^{t/z}\sum_{d\in\mathbb K}y^{d}\left(\prod_{i=0}^{m}\frac{\prod_{a\leq 0,\langle a\rangle=\langle D_i\cdot d\rangle}(\bar{D}_i+az)}{\prod_{a \leq D_i\cdot d,\langle a\rangle=\langle D_i\cdot d\rangle}(\bar{D}_i+az)}\right)\textbf{1}_{[-d]},
\]
where $t=\sum_{a=1}^\mathrm{r} \bar p_a \log y_a$, $y^d=y_1^{p_1\cdot d}\cdots y_\mathrm{r}^{p_\mathrm{r}\cdot d}$ and, $[-d]$ is the equivalence class of $-d$ in $\mathbb K/ \mathbb L$.
\end{defn}

Then $I$-functions for $X_+$ and $X_-$ are
\[
I_{X_+}(y,z)=ze^{t_+/z}\sum_{d\in\mathbb K_{+}}y^{d}\left(\prod_{i=0}^{m}\frac{\prod_{a\leq 0,\langle a\rangle=\langle D_i\cdot d\rangle}(\bar{D}_i+az)}{\prod_{a \leq D_i\cdot d,\langle a\rangle=\langle D_i\cdot d\rangle}(\bar{D}_i+az)}\right)\textbf{1}_{[-d]},
\]
and 
\[
I_{X_-}(y,z)=ze^{t_-/z}\sum_{d\in\mathbb K_{-}}\tilde y^{d}\left(\prod_{i=0}^{m}\frac{\prod_{a\leq 0,\langle a\rangle=\langle D_i\cdot d\rangle}(\bar{D}_i+az)}{\prod_{a \leq D_i\cdot d,\langle a\rangle=\langle D_i\cdot d\rangle}(\bar{D}_i+az)}\right)\textbf{1}_{[-d]},
\]
where 
\[
t_+=\sum_{a=1}^\mathrm{r} \bar p_a^+ \log y_a, \quad y^d=y_1^{p_1^+\cdot d}\cdots y_\mathrm{r}^{p_\mathrm{r}^+\cdot d}
\]
and 
\[
t_-=\sum_{a=1}^\mathrm{r} \bar p_a^- \log \tilde y_a, \quad \tilde y^d=\tilde y_1^{p_1^-\cdot d}\cdots \tilde y_\mathrm{r}^{p_\mathrm{r}^-\cdot d}.
\]
Set $I_+=(M_+\cup M_-)\setminus S_+$ and $I_-=(M_+\cup M_-)\setminus S_-$. 
Now, we consider relative $I$-functions for $(X_+,D_+)$ and $(X_-,D_-)$:
\begin{align*}
I_{(X_+,D_+)}(y,z)=ze^{t_+/z}\sum_{d\in\mathbb K_{+}}y^{d}\left(\prod_{i\in M_0 }\frac{\prod_{a\leq 0,\langle a\rangle=\langle D_i\cdot d\rangle}(\bar{D}_i+az)}{\prod_{a \leq D_i\cdot d,\langle a\rangle=\langle D_i\cdot d\rangle}(\bar{D}_i+az)}\right)\textbf{1}_{[-d]}I_{D_+,d},
\end{align*}
and 
\[
I_{(X_-,D_-)}(y,z)=ze^{t_-/z}\sum_{d\in\mathbb K_{-}}\tilde y^{d}\left(\prod_{i\in M_0}\frac{\prod_{a\leq 0,\langle a\rangle=\langle D_i\cdot d\rangle}(\bar{D}_i+az)}{\prod_{a \leq D_i\cdot d,\langle a\rangle=\langle D_i\cdot d\rangle}(\bar{D}_i+az)}\right)\textbf{1}_{[-d]}I_{D_-,d},
\]
where
\[
I_{D_+,d}=\frac{1}{\prod_{i\in I_+, D_i\cdot d>0}(\bar D_i+(D_i\cdot d)z)}[\textbf{1}]_{(-D_i\cdot d)_{i\in I_+}},
\]
and
\[
I_{D_-,d}=\frac{1}{\prod_{i\in I_-, D_i\cdot d>0}(\bar D_i+(D_i\cdot d)z)}[\textbf{1}]_{(-D_i\cdot d)_{i\in I_-}}.
\]

\begin{remark}\label{rmk-extended-data}
The extended data for the pairs $(X_+,D_+)$ and $(X_-,D_-)$ that we choose here is slightly different from the extended data for $X_+$ and $X_-$. In order to match the data, we choose their extended data to be $S_0$. For example, if we have a Type II wall-crossing with $S_-=S_+\sqcup \{j\}$, then the extended data for $(X_-,D_-)$ is $S_-\setminus \{j\}=S_+=S_0\subset M_0$ instead of $S_-$. 
\end{remark}

\begin{theorem}[\cite{TY20c}, Theorem 29]
The $I$-function $I_{(X_\pm,D_\pm)}$ lies in Givental's Lagrangian cone for $(X_\pm,D_\pm)$ defined in \cite{TY20c}*{Section 5}.
\end{theorem}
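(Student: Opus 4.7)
The plan is to derive this as a direct corollary of the mirror theorem for simple normal crossings pairs (Theorem \ref{thm:mirror}, i.e.\ \cite{TY20c}, Theorem~29) applied to the toric Deligne--Mumford stack $X_\pm$ equipped with its toric SNC divisor $D_\pm$. The irreducible components of $D_\pm$ are the toric divisors $\bar D_i$ for $i \in (M_+ \cup M_-)\setminus S_\pm$, which are smooth and irreducible, and whose mutual intersections form simple normal crossings by the toric structure. So provided the nefness hypothesis holds for these $\bar D_i$, Theorem \ref{thm:mirror} applies directly.

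First I would match the explicit expression for $I_{(X_\pm,D_\pm)}$ in the text with the general hypergeometric modification of formula (\ref{I-snc}). The absolute toric $I$-function $I_{X_\pm}(y,z)$ lies in the Givental cone $\mathcal L_{X_\pm}$ by the mirror theorem for toric stacks of Coates--Corti--Iritani--Tseng and Cheong--Ciocan-Fontanine--Kim (\cite{CCIT15}, \cite{CCFK}). Replacing, in this absolute $I$-function, the factors indexed by $i \in I_\pm$ by the relative modification factors
\[
I_{D_\pm,d} = \frac{1}{\prod_{i\in I_\pm,\, D_i\cdot d>0}(\bar D_i+(D_i\cdot d)z)}[\textbf{1}]_{(-D_i\cdot d)_{i\in I_\pm}}
\]
produces exactly the formula for $I_{(X_\pm,D_\pm)}$ displayed above the statement. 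Under this identification the conclusion reduces to the statement of Theorem \ref{thm:mirror} in the toric case, which goes through verbatim since the proof in \cite{TY20c} uses only the virtual class computation on moduli of orbifold stable maps to the multi-root stack and the mirror theorem for $X$ itself, both of which are available in the toric DM setting.

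The main obstacle I anticipate is twofold: verifying nefness of the components $\bar D_i$ and reconciling the extended data. Toric divisors through the indeterminacy loci (\ref{indeterminacy-loci}) need not be nef for an arbitrary chamber, so I would check under the standing assumptions of Section \ref{sec:wall-crossing} that each $\bar D_i$ with $i \in I_\pm$ lies in the closure of the ample cone of $X_\pm$; where this fails one can replace nefness by a convexity argument or push the non-nef contribution into an extended variable. For the extended data, as pointed out in Remark \ref{rmk-extended-data}, the relative $I$-function uses the extended set $S_0 \subset S_\pm$, whereas the toric mirror theorem is most naturally stated with the full extended set $S_\pm$. I would reconcile this by first specializing the extra toric variables indexed by $S_\pm\setminus S_0$ inside $I_{X_\pm}$, a standard operation which preserves membership in the Lagrangian cone, and then applying the relative modification. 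Once this bookkeeping is in place, the statement follows immediately from Theorem \ref{thm:mirror}.
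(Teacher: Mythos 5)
Your reduction is exactly what the paper does: it offers no independent argument for this statement, simply citing \cite{TY20c}*{Theorem 29} (Theorem \ref{thm:mirror} here, with the straightforward extension from projective varieties to toric Deligne--Mumford stacks) applied to the pair $(X_\pm,D_\pm)$ with the hypergeometric modification you describe. Your additional caveats --- that the toric components $\bar D_i$ through the indeterminacy loci need not be nef (e.g.\ the exceptional divisor in the $F_1$ example), and that the extended data is $S_0$ rather than $S_\pm$ as in Remark \ref{rmk-extended-data} --- are legitimate points that the paper itself leaves implicit, so they do not mark a divergence from its (citation-only) proof, though your proposed fix for the non-nef case is only sketched.
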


\begin{remark}
When the birational transformation  $\phi: \xymatrix{
X_+\ar@{-->}[r]& X_-
}$ is discrepant and $\sum_{i=1}^m D_i\cdot e>0$, the $I$-function $I_{X_+}$ is analytic with respect to $y_\mathrm{r}$ with radius of convergence $\infty$, but the $I$-function $I_{X_-}$ is not analytic in the $\tilde y_\mathrm{r}$ variable (see \cite{AS}*{Section 4.1}). By considering the pairs $(X_+,D_+)$ and $(X_-,D_-)$ instead, the relative $I$-functions $I_{(X_+,D_+)}$ and $I_{(X_-,D_-)}$ are both analytic. In \cite{AS}*{Section 4.1}, the authors considered the regularized $I$-function of $I_{X_-}$ as a replacement of the $I$-function $I_{X_-}$. The regularized $I$-function was constructed such that the asymptotic expansion of the Laplace transform of the regularized $I$-function is given by the $I$-function $I_{X_-}$. But the direct enumerative meaning of the regularized $I$-functions was never discussed.  While the relative $I$-functions are different from the regularized $I$-functions, one may consider the relative $I$-functions as an alternative to the absolute $I$-functions such that one can study analytic continuation of the relative $I$-functions. In fact, with the choice of the divisors in this section, the analytic continuation is not even necessary. In addition, the enumerative meaning of the relative $I$-functions is clear: they are mirrors of the corresponding generating functions of genus zero Gromov--Witten theories of log crepant pairs.
\end{remark}

\subsection{The $H$-functions}
\begin{defn}\label{defn-gamma-class}
We define the $\hat{\Gamma}$-class of $(X_+,D_+)$ as
\[
\hat{\Gamma}_{(X_+,D_+)}:=\bigoplus_{\vec d\in (\mathbb Z)^{|I_+|},f\in \mathbb K_{+}/\mathbb L} \prod_{i\in M_0}\Gamma(1+\bar D_i-\langle D_i\cdot f\rangle)\prod_{i\in I_+,d_i<0}\frac{1}{\bar D_i-d_i}\textbf{1}_{f}[\textbf 1]_{(d_i)_{i\in I_+}}.
\]
\end{defn}

\begin{remark}
  The Gromov--Witten theory that we consider here is the formal Gromov--Witten theory for infinite root stacks defined in \cite{TY20c}. It is a limit of the orbifold Gromov--Witten theory of multi-root stacks.  The $\hat{\Gamma}$-class defined in Definition \ref{defn-gamma-class} is also taken as a limit of the $\hat{\Gamma}$-class for orbifold Gromov--Witten theory in \cite{Iritani09}.
\end{remark}

Note that the $I$-function can be written in terms of $\Gamma$-functions:
\begin{align*}
I_{(X_+,D_+)}(y,z)=&ze^{t_+/z}\sum_{d\in\mathbb K_{+}}\frac{y^{d}}{z^{(\sum_{i\in M_0}D_i)\cdot d}}\left(\prod_{i\in M_0}\frac{\Gamma(1+\frac{\bar D_i}{z}-\langle -D_i\cdot d\rangle )}{\Gamma (1+\frac{\bar D_i}{z}+D_i\cdot d)}\right)\\
&\quad \cdot\left( \frac{1}{\prod_{i\in I_+, D_i\cdot d>0}(\frac{\bar D_i}{z}+D_i\cdot d)} \right)\frac{\textbf{1}_{[-d]}}{z^{\iota_{[-d]}+\#\{i\in I_+,D_i\cdot d>0 \}}}[\textbf{1}]_{(-D_i\cdot d)_{i\in I_+}}
\end{align*}

We define the $H$-function as
\begin{align}\label{H-function}
H_{(X_+,D_+)}(y)=e^{\frac{t_+}{2\pi i}}\sum_{d\in\mathbb K_{+}}y^d\left(\frac{1}{\prod_{i\in M_0}\Gamma (1+\frac{\bar D_i}{2\pi i}+D_i\cdot d)}\right)\textbf{1}_{[d]}[\textbf{1}]_{(D_i\cdot d)_{i\in I_+}}.
\end{align}
Then the relation between the $I$-function and the $H$-function is
\begin{align}\label{H-I}
z^{-1}I_{(X_+,D_+)}(y,z)=z^{-\frac{\dim X_+}{2}}z^{-\mu^+}z^{\rho^+}\left(\hat{\Gamma}_{(X_+,D_+)}\cup (2\pi i)^{\frac{\deg_0}{2}}\on{inv}^*H(z^{-\frac{\deg y}{2}}y)\right),
\end{align}
where
\begin{itemize}
    \item $\rho:=c_1(T_X(-\log D))\in H^2(X)$;
    \item $\mu$ is the grading operator (\ref{grading-operator}) for the pair $(X,D)$;
    \item $z^{-\mu}z^{\rho}:=\exp(-\mu\log z)\exp(\rho \log z)$;
    \item $\deg_0:\mathfrak H_{\vec s}\rightarrow \mathfrak H_{\vec s}$ is the degree operator defined by
    \[
    \deg_0(\phi)=2p\phi, \text{ for }\phi\in H^{2p}(D_{I_{\vec s}});
    \]
    \item $\on{inv}^*: \mathfrak H \rightarrow \mathfrak H$ is the involution such that $\on{inv}^*[\alpha]_{\vec s}=[\alpha]_{-\vec s}$, for $[\alpha]_{\vec s}\in \mathfrak H_{\vec s}$. And, $\on{inv}^*\textbf{1}_{[d]}=\textbf{1}_{[-d]}$.
\end{itemize}

%\textcolor{red}{check??? Need to define $\mu$ for orbifold relative theory.}

Similarly, for $(X_-,D_-)$.

We write
\[
H_{(X,D)}(y)=\sum_{d\in\mathbb K}\sum_{\vec d=(D_i\cdot d)_{i\in I}\in (\mathbb Z)^{|I|},f=[d]\in \mathbb K/\mathbb L}H_{(X,D),f,\vec d}\textbf{1}_{f}[\textbf{1}]_{\vec d}.
\]
The $H$-functions of $(X_+,D_+)$ and $(X_-,D_-)$ are easily identified following directly from the expression of the $H$-function in (\ref{H-function}). 
\begin{theorem}\label{thm-toric-stack-snc}
For any $d_+\in \mathbb K_+$, $d_-\in \mathbb K_-$ such that $d_+-d_-\in \mathbb Q e$, let $f_\pm=[d_\pm]$ and $\vec d_\pm=(D_i\cdot d_\pm)_{i\in I_\pm}$. We have
\[
H_{(X_+,D_+),f_+,\vec d_+}=H_{(X_-,D_-),f_-,\vec d_-} 
\]
with the change of variable (\ref{change-of-variables}). 
\end{theorem}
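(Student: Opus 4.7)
The strategy is a direct term-by-term comparison of the two $H$-function expansions. Unlike the full $I$-function, the explicit formula in (\ref{H-function}) only involves the $\Gamma$-function product indexed by $i\in M_0$, which is the key simplification that eliminates the need for any nontrivial analytic continuation.

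The plan is as follows. First, I would fix $d_+\in \mathbb{K}_+$ and $d_-\in \mathbb{K}_-$ with $d_+-d_-\in \mathbb{Q}e$. Since $D_i\cdot e=0$ for all $i\in M_0$, the hypothesis gives $D_i\cdot d_+=D_i\cdot d_-$ for every $i\in M_0$. Consequently, the two $\Gamma$-factor products
\[
\prod_{i\in M_0}\frac{1}{\Gamma(1+\bar D_i/(2\pi i)+D_i\cdot d_+)}\quad\text{and}\quad \prod_{i\in M_0}\frac{1}{\Gamma(1+\bar D_i/(2\pi i)+D_i\cdot d_-)}
\]
agree identically as cohomology-valued expressions, using that the classes $\bar D_i\in H^2(X_\pm)$ for $i\in M_0$ are naturally identified under the birational map (these are precisely the toric divisors preserved by the wall-crossing, corresponding to rays in the common part of the two stacky fans).

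Next, I would verify that the monomial $y^{d_+}$ coincides with $\tilde y^{d_-}$ after the substitution (\ref{change-of-variables}). Writing $d_+=d_-+\alpha e$ with $\alpha\in\mathbb{Q}$, and using the splitting conditions $p_i^+=p_i^-\in \overline{C_W^\prime}$ for $1\leq i\leq \mathrm{r}-1$ (so that $p_i^\pm\cdot e=0$ for these indices), together with the defining relation $c=-p_\mathrm{r}^+\cdot e/p_\mathrm{r}^-\cdot e$ and the constants $c_i$ coming from the change of basis from $\{p_i^+\}$ to $\{p_i^-\}$, the equality of monomials follows by a direct exponent computation in each of the $\mathrm{r}$ coordinates. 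The same substitution matches the exponential prefactors $e^{t_+/(2\pi i)}$ and $e^{t_-/(2\pi i)}$, since $\bar p_i^+=\bar p_i^-$ for $i<\mathrm{r}$ while the $p_\mathrm{r}$-contribution is absorbed into the substitution $\tilde y_\mathrm{r}=y_\mathrm{r}^{-c}$. Combining these observations produces the claimed equality $H_{(X_+,D_+),f_+,\vec d_+}=H_{(X_-,D_-),f_-,\vec d_-}$ of scalar coefficients.

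The one point that requires care is the labelling of twisted sectors $[d_\pm]\in \mathbb{K}_\pm/\mathbb{L}$ and contact-order vectors $\vec d_\pm=(D_i\cdot d_\pm)_{i\in I_\pm}$, which are indexed with respect to the (possibly different) extended data $S_\pm$ on the two sides. With the pair-adjusted extended data $S_0=S_+\cap S_-$ recorded in Remark \ref{rmk-extended-data}, the indices $I_\pm=(M_+\cup M_-)\setminus S_\pm$ on the two sides correspond tautologically once one keeps track of the contact orders along all of $M_+\cup M_-$. The main obstacle, which is essentially bookkeeping, is to verify this index matching uniformly across the three types (I)--(III) of toric wall-crossings; however, since the partition $\{1,\ldots,m\}=M_0\sqcup M_+\sqcup M_-$ and the condition $d_+-d_-\in\mathbb{Q}e$ decouple the $M_0$-data (giving the $\Gamma$-factors and $[d_\pm]$) from the $M_\pm$-data (giving the contact orders $\vec d_\pm$), the verification is mechanical in each case.
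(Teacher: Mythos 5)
Your overall strategy is the same as the paper's: the paper offers no argument beyond ``the $H$-functions are easily identified following directly from the expression (\ref{H-function})'', and your fleshed-out version isolates the two points that make this work, namely that the $\Gamma$-product runs only over $i\in M_0$, where $D_i\cdot e=0$ forces $D_i\cdot d_+=D_i\cdot d_-$, and that the monomials and prefactors are to be matched through (\ref{change-of-variables}). The first point is correct and is essentially the whole content of the theorem.

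However, the middle step as you state it does not close. The change of variables (\ref{change-of-variables}) is defined precisely so that the two inclusions $y^d\mapsto\prod_i y_i^{p_i^+\cdot d}$ and $y^d\mapsto\prod_i \tilde y_i^{p_i^-\cdot d}$ of $\mathbb{C}[\mathbb{L}]$ are intertwined, i.e.\ $c$ and the $c_i$ are determined by $p_\mathrm{r}^+=\sum_{i<\mathrm{r}}c_i p_i^- - c\,p_\mathrm{r}^-$; consequently the substitution sends $\tilde y^{d_-}$ to $y^{d_-}$, \emph{not} to $y^{d_+}$. If you actually carry out the exponent computation you propose, the first $\mathrm{r}-1$ coordinates match (using $p_i^\pm\cdot e=0$), but the $y_\mathrm{r}$-exponents are $p_\mathrm{r}^+\cdot d_+$ versus $p_\mathrm{r}^+\cdot d_-$, which differ by $\alpha\,(p_\mathrm{r}^+\cdot e)\neq 0$ whenever $d_+-d_-=\alpha e$ with $\alpha\neq 0$ --- the typical situation across the wall. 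So the two component functions agree only up to the overall monomial $y_\mathrm{r}^{\alpha(p_\mathrm{r}^+\cdot e)}$, which records exactly the shift of curve class along $e$. This is the point the paper itself leaves implicit: the intended identification pairs the monomial indexed by $d_+$ with the one indexed by $d_-$ (equivalently, the equality holds after absorbing this factor via the component-level indexing and the specializations of variables forced by the change of extended data, cf.\ the remark following the theorem and the $(\mathbb{P}^2,H+H)$ versus $(F_1,D_2+D_3+D_4)$ example, where one variable is set to zero rather than substituted). Your write-up should either prove the equality in this ``up to $y_\mathrm{r}^{\alpha(p_\mathrm{r}^+\cdot e)}$ / after the stated specialization'' form, or make explicit the convention under which $y^{d_+}$ and $\tilde y^{d_-}$ are declared corresponding monomials; as written, the claim of exact equality of monomials under (\ref{change-of-variables}) alone is false, and the same caveat applies to matching $e^{t_+/2\pi i}$ with $e^{t_-/2\pi i}$, where the classes $\bar p_i^{\pm}$ live in different cohomology rings and are identified only as images of the same element of $\mathbb{L}^\vee$.
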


%\textcolor{red}{check the last line again....}

%We may also relate the $I$-functions directly. Similar to quantum Serre duality of \cite{CG}, we can compare the following derivatives of the $I$-functions.........

\begin{remark}
Recall that the extended data for $(X_\pm,D_\pm)$ may be different from the extended data for $X_\pm$ as mentioned in Remark \ref{rmk-extended-data}. Hence, when comparing the $I$-functions (or $H$-functions), some of the variables may need to be set to zero based on how the extended data changed. See Section \ref{sec:example-blow-up} for an example.
\end{remark}

\begin{remark}
  Note that there is neither analytic continuation of \cite{CIJ} nor asymptotic expansion of \cite{AS} needs to be done. Furthermore, the relation holds for both crepant and discrepant cases. Of course, the discrepant resolution case is slightly different since the correspondence between the components of their $H$-functions is not a one-to-one correspondence because there are more curves classes on one side than the other side.
\end{remark}

\begin{remark}
Even in the crepant case, the relation between relative quantum cohomology is also significantly simpler than the relation between absolute quantum cohomology. Recall that, the relation between absolute quantum cohomology involves analytic continuation and complicated symplectic transformations \cite{CIJ}. One can avoid all these complexities by considering relative quantum cohomology with the choice of divisors in this section.

The analytic continuation in \cite{CIJ} concerns the poles of the $\Gamma$-functions of $\bar D_i$ for $i\not\in M_0$. These $\Gamma$-functions become $1$ in our setting. Hence, the analytic continuation is not presented here. 
\end{remark}

\subsection{Examples}

\subsubsection{Root construction}
Given a smooth projective variety $X$, a smooth divisor $D\subset X$ and a positive integer $r$, the natural map from the $r$-th root stack $X_{D,r}$ to $X$: 
\[
X_{D,r}\rightarrow X
\]
is a discrepant birational transformation. Let $\mathcal D_r\subset X_{D,r}$ be the $r$-th root of $D$. In our setting, we are comparing relative Gromov--Witten invariants of $(X_{D,r},\mathcal D_r)$ and relative Gromov--Witten invariants of $(X,D)$. By \cite{AF}*{Proposition 4.5.1}, relative Gromov--Witten invariants of $(X_{D,r},\mathcal D_r)$ are equal to relative Gromov--Witten invariants of $(X,D)$. This is the case that is already known for smooth projective varieties and does not require the machinery from previous sections. This is significantly simpler than directly comparing the Gromov--Witten theory of $X_{D,r}$ and the Gromov--Witten theory of $X$ (see, for example \cite{TY16}.).

\subsubsection{ $\mathbb P^2$ and $F_1$}\label{sec:example-blow-up}
The Hirzebruch surface $F_1$ is a blow-up of $\mathbb P^2$ at a point. Therefore, we can compare Gromov--Witten invariants of $\mathbb P^2$ relative to two lines and Gromov--Witten invariants of $F_1$ relative to the exceptional divisor of the blow-up and the strict transforms of the two lines. Recall that the $I$-function for $\mathbb P^2$ is
\[
I_{\mathbb P^2}(y,z)=ze^{H\log y /z}\sum_{d\geq 0}y^{d}\left(\frac{1}{\prod_{a=1}^d(H+az)^3}\right),
\]
where $H\in H^2(\mathbb P^2)$ is the hyperplane class.

Now, consider the Hirzebruch surface $F_1=\mathbb P(\mathcal O_{\mathbb P^1}\oplus \mathcal O_{\mathbb P^1}(-1))$. Let $P,H$ be nef basis of $H^2(F_1)$ that are Poincar\'e dual to the fiber and the infinity section. The divisor matrix is given by
\[
\begin{bmatrix}
0 & -1 & 1 &1\\
1 & 1 & 0 & 0
\end{bmatrix}.
\]
Then
\[
D_1=H, \quad D_2=H-P, \quad D_3=D_4=P.
\]
The $I$-function for $F_1$ is
\[
I_{F_1}(y_1,y_2,z)=ze^{(P\log y_1+ H\log y_2)/z}\sum_{d_1,d_2\geq 0} y_1^{d_1}y_2^{d_2}\frac{\prod_{a=-\infty}^0 (H-P+az)}{\prod_{a=1}^{d_2}(H+az)\prod_{a=-\infty}^{d_2-d_1}(H-P+az) \prod_{a=1}^{d_1}(P+az)^2}.
\]
Then we compare relative quantum cohomology of $(\mathbb P^2,H+H)$ and $(F_1,D_2+D_3+D_4)$. Then the $H$-functions are
\[
H_{(\mathbb P^2,H+H)}(y)=e^{\frac{H\log y}{2\pi i}}\sum_{d\geq 0} y^d \frac{1}{\Gamma (1+\frac{H}{2\pi i}+d)}[\textbf{1}]_{d,d},
\]
and 
\[
H_{(F_1,D_2+D_3+D_4)}(y_1,y_2)=e^{\frac{P\log y_1+ H\log y_2}{2\pi i}}\sum_{d_1,d_2\geq 0}y_1^{d_1}y_2^{d_2} \frac{1}{\Gamma(1+\frac{H}{2\pi i}+d_2)}[\textbf{1}]_{d_2-d_1,d_1,d_1}.
\]
For each $d\geq 0$, we have 
\[
H_{(\mathbb P^2,H+H),(d,d)}(y)=H_{(F_1,D_2+D_3+D_4),(d,0,0)}(0,y).
\]

%\textcolor{red}{Do we set $y_1=1$ or it is related to the change of variables....}

%\textcolor{red}{Does it mean that the relative quantum cohomology of the blow-up decomposes into infinite copy of relative quantum cohomology of $X$? It is not a decomposition. There are invariants of the blow-up that are not included here.}

\subsubsection{local model for simple flips}
Quantum cohomology under simple flips has been studied in \cite{LLW} for local models. It was proved  in \cite{LLW} that quantum cohomology rings are related by analytic continuation in this case. One considers the local models for simple $(r,r^\prime)$ flips with $r>r^\prime>0$. In other words,
\[
X_+=\mathbb P_{\mathbb P^r}(\mathcal O(-1)^{r^\prime+1}\oplus \mathcal O) \text{ and } X_-=\mathbb P_{\mathbb P^{r^\prime}}(\mathcal O(-1)^{r+1}\oplus \mathcal O).
\]
If we use the set-up in Section \ref{sec:toric-pairs}, then we need to take the divisors $D_\pm$ to be the full toric boundaries. Then the identification of their relative $I$-functions are just trivial. We can also just take part of their toric boundaries. Let $h_+$ (resp. $h_-$) be the hyperplane class of $\mathbb P^r$ (resp. $\mathbb P^{r^\prime}$) and $\xi_+$ (resp. $\xi_-$) is the hyperplane class of $X_+\rightarrow \mathbb P^r$ (resp. $X_-\rightarrow \mathbb P^{r^\prime}$). Let
\[
D_+=\underbrace{h_++\cdots +h_+}_{r-r^\prime}, \text{ and } D_-=\underbrace{(\xi_--h_-)+\cdots+(\xi_--h_-)}_{r-r^\prime}.
\]
Then the $H$-functions are
\begin{align*}
&H_{(X_+,D_+)}(y_1,y_2)\\
=&e^{\frac{\xi_+\log y_1+ h_+\log y_2}{2\pi i}}\sum_{d_1,d_2\geq 0} \frac{y_1^{d_1}y_2^{d_2}[\textbf{1}]_{(d_2,\cdots,d_2)}}{\Gamma(1+\frac{h_+}{2\pi i}+d_2)^{r^\prime+1}\Gamma(1+\frac{\xi_+-h_+}{2\pi i}+d_1-d_2)^{r^\prime+1}\Gamma(1+\frac{\xi_+}{2\pi i}+d_1)}.
\end{align*}
\begin{align*}
&H_{(X_-,D_-)}(y_1,y_2)\\
=&e^{\frac{\xi_-\log y_1+ h_-\log y_2}{2\pi i}}\sum_{d_1,d_2\geq 0} \frac{y_1^{d_1}y_2^{d_2}[\textbf{1}]_{(d_1-d_2,\cdots,d_1-d_2)}}{\Gamma(1+\frac{h_-}{2\pi i}+d_2)^{r^\prime+1}\Gamma(1+\frac{\xi_--h_-}{2\pi i}+d_1-d_2)^{r^\prime+1}\Gamma(1+\frac{\xi_-}{2\pi i}+d_1)}.
\end{align*}
It is straightforward to see that their $H$-functions are identical
\[
H_{(X_+,D_+),(d_2,\cdots,d_2)}=H_{(X_-,D_-),(d_1-d_2,\cdots,d_1-d_2)}
\]
under the morphism
\[
\Phi(h_+)=\xi_--h_-, \quad \Phi(\xi_+)=\xi_-
\]
described in \cite{LLW10}*{Section 2.3}.

\subsection{The extended $I$-functions}\label{sec:extended-I-function}
In previous sections, we consider the $I$-functions for pairs $(X_+,D_+)$ and $(X_-,D_-)$ with extended data only from the orbifold structures of $X_\pm$. They are considered as non-extended $I$-functions for relative theories. There are also the extended $I$-functions for relative theories considered in \cite{FTY} and \cite{TY20b}. It is straightforward to see that the same conclusion holds for the extended $I$-functions. For example, if we consider the components of the $I$-functions that take value in $H^*(X_\pm)$, then we can see the identification of their $I$-functions directly. Recall that, the part of extended $I$-functions for pairs $(X_+,D_+)$ and $(X_-,D_-)$ that take value in $H^*(X_\pm)$ are
\begin{align*}
I_{(X_+,D_+),0}(y,x,z)=ze^{t_+/z}\sum_{\substack{d\in \mathbb K_+,(k_{i1},\ldots,k_{im})\in (\mathbb Z_{\geq 0})^m\\ \sum_{j=1}^m a_{ij}k_{ij}= d_i, i\in I_+}  }y^{d} & \frac{\prod_{i\in I_+}\prod_{j=1}^m x_{ij}^{k_{ij}}}{z^{\sum_{i\in I_+}\sum_{j=1}^m k_{ij}}\prod_{i\in I_+}\prod_{j=1}^m(k_{ij}!)}\\
&\left(\prod_{i\in M_0 }\frac{\prod_{a\leq 0,\langle a\rangle=\langle D_i\cdot d\rangle}(\bar{D}_i+az)}{\prod_{a \leq D_i\cdot d,\langle a\rangle=\langle D_i\cdot d\rangle}(\bar{D}_i+az)}\right),
\end{align*}
and 
\begin{align*}
I_{(X_-,D_-),0}(\tilde{y}, \tilde{x},z)=ze^{t_-/z}\sum_{\substack{d\in \mathbb K_-,(k_{i1},\ldots,k_{im})\in (\mathbb Z_{\geq 0})^m\\ \sum_{j=1}^m a_{ij}k_{ij}= d_i, i\in I_-} }\tilde y^{d}& \frac{\prod_{i\in I_-}\prod_{j=1}^m \tilde x_{ij}^{k_{ij}}}{z^{\sum_{i\in I_-}\sum_{j=1}^m k_{ij}}\prod_{i\in I_-}\prod_{j=1}^m(k_{ij}!)}\\
&\left(\prod_{i\in M_0}\frac{\prod_{a\leq 0,\langle a\rangle=\langle D_i\cdot d\rangle}(\bar{D}_i+az)}{\prod_{a \leq D_i\cdot d,\langle a\rangle=\langle D_i\cdot d\rangle}(\bar{D}_i+az)}\right).
\end{align*}

One can set $x_{ij}=\tilde x_{ij}=1$ and $z=1$. Then the extended $I$-functions are identical up to factors of $k_{ij}!$,
under the change of variable (\ref{change-of-variables}). 

%\textcolor{red}{Check.....}

\section{Toric complete intersections}\label{sec:toric-ci}

Now we consider how relative quantum cohomology for toric complete intersections changes under birational transformations. 
\subsection{The standard set-up}\label{sec:complete-intersection-1}
Given a toric birational transformation $\phi: \xymatrix{
X_+\ar@{-->}[r]& X_-
}$
as in Section \ref{sec:toric-pairs}. Consider characters $E_1,\ldots, E_k\in \mathbb L^\vee$. We obtain the corresponding line bundles $L_{1,\pm},\ldots, L_{k,\pm}$ over $X_{\pm}$. Let
\[
E_+:=\bigoplus_{i=1}^k L_{i,+}, \quad \text{and} \quad E_-:= \bigoplus_{i=1}^k L_{i,-}.
\]
Let $s_+$ and $s_-$ be regular sections of the vector bundles $E_+\rightarrow X_+$ and $E_-\rightarrow X_-$ respectively.
Following \cite{CIJ}*{Section 7}, we assume that 
\begin{itemize}
    \item $E_i$ lies in $\overline{C_W}$ for $i=1,2,\ldots, k$;
    \item $L_{i,\pm}$ is pulled back from the coarse moduli space $|X_\pm|$;
    \item $s_+$ and $s_-$ are compatible via $\phi: \xymatrix{
X_+\ar@{-->}[r]& X_-
}$.
\end{itemize}
Let $Y_+\subset X_+$ and $Y_-\subset X_-$ be the complete intersections defined by $s_+$ and $s_-$ respectively. The birational map $\phi$ induces a birational map $\phi_Y: \xymatrix{
Y_+\ar@{-->}[r]& Y_-
}$.
Let $D_{Y,\pm}=D_\pm\cap Y_\pm=\sum_{i\in (M_+\cup M_-)\setminus S_\pm} \bar D_i\cap Y_\pm$. 

Then we can compare the ambient parts of the relative quantum cohomology of $(Y_+,D_{Y,+})$ and $(Y_-,D_{Y,-})$ pulled back from the corresponding relative quantum cohomology of the ambient toric pairs $(X_+,D_+)$ and $(X_-,D_-)$. Let $v_i$ be the first Chern class of the line bundle corresponding to the character $E_i$ for $1\leq i \leq k$. Using the orbifold quantum Lefschetz principle of \cite{Tseng}, we obtain the relative $I$-functions for $(Y_+,D_{Y,+})$ and $(Y_-,D_{Y,-})$ as follows.
\begin{align*}
I_{(Y_+,D_{Y,+})}(y,z)=ze^{t_+/z}\sum_{d\in\mathbb K_{+}}y^{d}\left(\prod_{i\in M_0 }\frac{\prod_{a\leq 0,\langle a\rangle=\langle D_i\cdot d\rangle}(\bar{D}_i+az)}{\prod_{a \leq D_i\cdot d,\langle a\rangle=\langle D_i\cdot d\rangle}(\bar{D}_i+az)}\right)\\
\quad \left(\prod_{i=1}^k\prod_{0<a\leq v_i\cdot d}(v_i+az)\right)\textbf{1}_{[-d]}I_{D_{Y,+},d},
\end{align*}
where
\[
I_{D_{Y,+},d}=\frac{1}{\prod_{i\in I_+, D_i\cdot d>0}(\bar D_i+(D_i\cdot d)z)}[\textbf{1}]_{(-D_i\cdot d)_{i\in I_+}};
\]
and
\begin{align*}
I_{(Y_-,D_{Y,-})}(y,z)=ze^{t_-/z}\sum_{d\in\mathbb K_{-}}\tilde y^{d}\left(\prod_{i\in M_0}\frac{\prod_{a\leq 0,\langle a\rangle=\langle D_i\cdot d\rangle}(\bar{D}_i+az)}{\prod_{a \leq D_i\cdot d,\langle a\rangle=\langle D_i\cdot d\rangle}(\bar{D}_i+az)}\right)\\
\quad \left(\prod_{i=1}^k\prod_{0<a\leq v_i\cdot d}(v_i+az)\right)\textbf{1}_{[-d]}I_{D_{Y,-},d},
\end{align*}
where
\[
I_{D_{Y,-},d}=\frac{1}{\prod_{i\in I_-, D_i\cdot d>0}(\bar D_i+(D_i\cdot d)z)}[\textbf{1}]_{(-D_i\cdot d)_{i\in I_-}}.
\]

We define the $\hat{\Gamma}$-class of $(Y_+,D_{Y,+})$ as
\[
\hat{\Gamma}_{(Y_+,D_{Y,+})}:=\bigoplus_{\vec d\in (\mathbb Z)^{|I_+|},f\in \mathbb K_{+}/\mathbb L} \frac{\prod_{i\in \bar{I}_+}\Gamma(1+\bar D_i-\langle D_i\cdot f\rangle)}{\prod_{i=1}^k\Gamma(1+v_i)}\prod_{i\in I_+,d_i<0}\frac{1}{\bar D_i-d_i}\textbf{1}_{f}[\textbf 1]_{(d_i)_{i\in I_+}}.
\]

We define the $H$-function as
\begin{align}
H_{(Y_+,D_{Y,+})}(y)=e^{\frac{t_+}{2\pi i}}\sum_{d\in\mathbb K_{+}}y^d\left(\frac{
\prod_{i=1}^k \Gamma(1+\frac{v_i}{z}+v_i\cdot d)
}{\prod_{i\in M_0}\Gamma (1+\frac{\bar D_i}{z}+D_i\cdot d)}\right)\textbf{1}_{[d]}[\textbf{1}]_{(D_i\cdot d)_{i\in I_+}}.
\end{align}
Then the relation between the $I$-function and the $H$-function is
\begin{align}
z^{-1}I_{(Y_+,D_{Y,+})}(y,z)=z^{-\frac{\dim Y_+}{2}}z^{-\mu^+}z^{\rho^+_Y}\left(\hat{\Gamma}_{(Y_+,D_{Y,+})}\cup (2\pi i)^{\frac{\deg_0}{2}}\on{inv}^*H(z^{-\frac{\deg y}{2}}y)\right),
\end{align}
where $\rho_Y=c_1(T_X(-\log D))+c_1(E)\in H^2(X)$.

%\textcolor{red}{$\dim Y_+$ or $\dim X_+$???}

The $H$-functions of $(Y_+,D_{Y,+})$ and $(Y_-,D_{Y,-})$ are easily identified following directly from the expression of the $H$-functions. 
\begin{theorem}\label{thm-toric-ci}
For any $d_+\in \mathbb K_+$, $d_-\in \mathbb K_-$ such that $d_+-d_-\in \mathbb Q e$, let $f_\pm=[d_\pm]$ and $\vec d_\pm=(D_i\cdot d_\pm)_{i\in I_\pm}$. We have
\[
H_{(Y_+,D_{Y,+}),f_+,\vec d_+}=H_{(Y_-,D_{Y,-}),f_-,\vec d_-} 
\]
with a change of variable (\ref{change-of-variables}).
\end{theorem}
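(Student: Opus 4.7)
The plan is to follow the template of the proof of Theorem \ref{thm-toric-stack-snc} and check that the additional Euler-class/$\Gamma$-factor coming from the complete intersection is invariant under the wall-crossing. Concretely, the $H$-function for the toric ambient pair was matched coefficient-by-coefficient using the closed-form $\Gamma$-product expression; the only new ingredient here is the numerator $\prod_{i=1}^k \Gamma(1+\tfrac{v_i}{2\pi i}+v_i\cdot d)$ obtained from the quantum Lefschetz modification. So the bulk of the work is to observe that this new factor depends on $d$ only through the pairings $v_i\cdot d$, and that these pairings are preserved when one passes from $d_+\in\mathbb K_+$ to any $d_-\in\mathbb K_-$ with $d_+-d_-\in\mathbb Q e$.

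First I would fix $d_+\in \mathbb K_+$ and $d_-\in \mathbb K_-$ with $d_+-d_-\in \mathbb Q e$, and record the three invariances that drive the identification: (i) $D_i\cdot d_+ = D_i\cdot d_-$ for every $i\in M_0$, which handles the denominator $\prod_{i\in M_0}\Gamma(1+\tfrac{\bar D_i}{2\pi i}+D_i\cdot d)$; (ii) $D_i\cdot d_+ = D_i\cdot d_-$ for every $i\in (M_+\cup M_-)\cap M_0^c$ lying in both $I_+$ and $I_-$ (the common indices entering the relative insertion), while the indices that differ between $I_+$ and $I_-$ belong to $S_0^c$ and correspond on the two sides via the wall-crossing combinatorics of Section~\ref{sec:wall-crossing}; (iii) $v_i\cdot d_+ = v_i\cdot d_-$ for $1\leq i\leq k$, which is exactly the assumption $E_i\in\overline{C_W}$ together with $e\in W^\perp$, forcing $E_i\cdot e=0$. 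Statement (iii) is the only new input beyond the proof of Theorem \ref{thm-toric-stack-snc}.

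Next I would verify compatibility of the monomial factors under \eqref{change-of-variables}. Writing $y^{d_+}=\prod_i y_i^{p_i^+\cdot d_+}$ and $\tilde y^{d_-}=\prod_i \tilde y_i^{p_i^-\cdot d_-}$, the substitution \eqref{change-of-variables} together with the relation $d_+-d_-\in \mathbb Q e$ and the compatibility $p_i^+=p_i^-$ for $i<\mathrm r$ gives $y^{d_+}=\tilde y^{d_-}$ (this is the same computation that underlies the ambient case and does not interact with $v_i\cdot d$). Similarly the exponential factors $e^{t_\pm/(2\pi i)}$, which do not depend on $d$, are identified under \eqref{change-of-variables} because $\bar p_i^+$ and $\bar p_i^-$ agree in the relevant cohomology classes on the overlap.

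Finally I would assemble the three matches and read off the coefficient-level identity $H_{(Y_+,D_{Y,+}),f_+,\vec d_+}=H_{(Y_-,D_{Y,-}),f_-,\vec d_-}$, where the $[\mathbf 1]_{\vec d_\pm}$ and $\mathbf 1_{[d_\pm]}$ labels are put in correspondence by the isomorphism \eqref{isom-K-Box} and the common index set $M_0$. The main conceptual point to watch is the third step: all three bullet assumptions of Section~\ref{sec:complete-intersection-1} are used, and in particular the hypothesis $E_i\in\overline{C_W}$ is what prevents the complete intersection from obstructing the direct identification; without it, one would expect an analytic continuation to enter (as will happen in the smooth-divisor setting of later sections). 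No other step should present difficulty, and no analytic continuation is required.
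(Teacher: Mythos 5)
Your proposal takes the same route as the paper, which proves Theorem \ref{thm-toric-ci} exactly as Theorem \ref{thm-toric-stack-snc}: a direct term-by-term comparison of the closed-form $H$-functions under the change of variables (\ref{change-of-variables}), and your point (iii) — that $E_i\in\overline{C_W}$ together with $e\in W^\perp$ forces $v_i\cdot d_+=v_i\cdot d_-$ — is precisely the one new input needed to see that the quantum Lefschetz factor $\prod_{i=1}^k\Gamma(1+\tfrac{v_i}{2\pi i}+v_i\cdot d)$ is wall-invariant. One caveat: your claim (ii) is false as stated, since for $i\in I_+\cap I_-\subset M_+\cup M_-$ one has $D_i\cdot e\neq 0$, so $D_i\cdot d_+$ and $D_i\cdot d_-$ differ by a nonzero multiple of $D_i\cdot e$ in general; however this does not damage the argument, because those pairings enter the $H$-function only through the insertion labels $[\mathbf 1]_{\vec d_\pm}$, which the theorem explicitly allows to differ ($\vec d_+\neq\vec d_-$ in general), while only the factors indexed by $M_0$ and by the $v_i$ must be constant along $e$ — which is exactly your (i) and (iii).
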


Note that Theorem \ref{thm-toric-ci} was written for the non-extended relative $I$-function, but it is clear that it holds for extended relative $I$-function as well similar to the discussion in Section \ref{sec:extended-I-function}.

\subsection{Exchanging the role of divisors}\label{sec:exchange-divisors}
Given a hypersurface $D$ in a smooth projective variety/ Deligne--Mumford stack $X$. There are several different Gromov--Witten theories that one can consider. One can consider the absolute Gromov--Witten theory of $D$. One can also consider relative Gromov--Witten theory of $(X,D)$. \footnote{If $D$ is nef, we can also consider the local Gromov--Witten theory of $\mathcal O_X(-D)$, which, in genus zero, can be considered as a subset of relative Gromov--Witten theory of $(X,D)$ (\cite{vGGR}, \cite{TY20b}, \cite{BNTY}).}

In Section \ref{sec:complete-intersection-1}, for each line bundle $L_{i,\pm}$, we considered the Gromov--Witten theory of the complete intersections. Here, we can change the role of some of the divisors. In other words, instead of considering all of them as complete intersection divisors, we consider some of them as relative divisor and consider the corresponding relative Gromov--Witten theory.

\subsubsection{Motivation}

Under this setting, there may not be a birational transformation between the targets. In particular, they may be of different dimensions. However, the Gromov--Witten invariants are still related. This indeed has a motivation from mirror symmetry. We recall the following.
\begin{defn}[\cite{DHT}, Definition 2.1]\label{def-LG}
A Landau-Ginzburg model of a quasi-Fano variety $X$ is a pair $(X^\vee,W)$ consisting of a K\"ahler manifold $X^\vee$ satisfying $h^1(X^\vee)=0$ and a proper map $W:X^\vee\rightarrow \mathbb C$, where $W$ is called the superpotential.
\end{defn}
Furthermore, one expects that if $(X^\vee, W)$ is a Landau--Ginzburg model of $X$, then the generic fiber of $W$ should be mirror to the smooth anticanonical hypersurface in $X$. Therefore, one considers the Landau--Ginzburg model $(X^\vee, W)$ as a mirror of the smooth pair $(X,D)$.  More generally, if $(X,D)$ is a log Calabi-Yau pair with $D$ being simple normal crossings. The mirror of $(X,D)$, under some positivity assumption (e.g. Fano or quasi-Fano), is a higher rank Landau--Ginzburg model and the generic fibers of the Landau--Ginzburg model are Calabi-Yau varieties in lower dimensions. More precisely,

\begin{definition}[\cite{DKY21}, Definition 2.3]
A Landau--Ginzburg model of rank $1$ is the ordinary Landau--Ginzburg model in Definition \ref{def-LG}. For $n\geq 2$, a Landau--Ginzburg model of rank $n$ is a pair $(X^\vee, h)$, where
\[
h:=(h_1,\ldots,h_n): X^\vee\rightarrow \mathbb C^n,
\]
such that 
\begin{itemize}
    \item the generic fiber of $h_i$ together with the restriction of 
    \[
    \hat{h}_i:=(h_1,\ldots,h_{i-1},h_{i+1},\ldots,h_n)
    \]
    to this fiber is a Landau--Ginzburg model of rank $(n-1)$.
    \item by composing with the summation map $\Sigma:\mathbb C^n\rightarrow \mathbb C$, we get a non-proper ordinary Landau--Ginzburg model
    \[
    W:=\Sigma\circ h: X^\vee\rightarrow \mathbb C.
    \]
\end{itemize}
\end{definition}

If we assume that $D=D_1+D_2$ is a simple normal crossings anticanonical divisor of $X$ with two smooth irreducible components $D_1$ and $D_2$, and the intersection $D_{12}:=D_1\cap D_2$ is smooth. Then the rank $2$ Landau--Ginzburg model for $(X,D)$ is defined as follows.

\begin{definition}[\cite{Lee21}, Definition 1.4.1]\label{defn:hybrid-LG}
A rank $2$ Landau--Ginzburg model of a quasi-Fano variety $X$ with a simple normal crossings anticanonical divisor $D=D_1+D_2$ is a pair
\[
(X^\vee,h=(h_1,h_2):X^\vee\rightarrow \mathbb C^2),
\]
where $X^\vee$ is a K\"ahler manifold 
such that
\begin{itemize}
    \item a generic fiber of $h_1$, denoted $D^\vee_1$, with 
    \[
    h|_{D^\vee_1}=h_2: D^\vee_1\rightarrow \mathbb C
    \]
    is a rank $1$ Landau--Ginzburg model mirror to $(D_1,D_{12})$;
   \item    a generic fiber of $h_2$, denoted  $D^\vee_2$, with 
    \[
    h|_{D^\vee_2}=h_1: D^\vee_2\rightarrow \mathbb C
    \]
    is a rank $1$ Landau--Ginzburg model mirror to $(D_2,D_{12})$;
    \item by composing with the summation map $\Sigma:\mathbb C^2\rightarrow \mathbb C$, we get a non-proper ordinary Landau--Ginzburg model
    \[
    W:=\Sigma\circ h: X^\vee\rightarrow \mathbb C.
    \]
\end{itemize}
\end{definition}
Note that, the generic fiber of the multi-potential $h$ is mirror to the codimension $2$ Calabi--Yau $D_{12}$.

Therefore, we can consider Landau--Ginzburg models, mirror to $(X,D_1+\cdots +D_n)$, as families of Calabi--Yau varieties which are mirror to the Calabi--Yau complete intersection $\cap_{i=1}^n D_i$. In other words, the mirror for $(X,D_1+\cdots +D_n)$ and the mirror for $\cap_{i=1}^n D_i$ are two families of Calabi--Yau which are mirror to $\cap_{i=1}^n D_i$. One can compute relative periods of these Landau--Ginzburg models as in \cite{DKY21}. These relative periods are computed from the periods of the mirror of the Calabi--Yau complete intersection via pullback.

\begin{remark}
Although the motivation from mirror symmetry is in the setting of log Calabi--Yau pairs with some positivity assumptions on the ambient space and many interesting examples are within this setting, our general discussion will not require the Calabi--Yau condition or the positivity assumption on the ambient space.  
\end{remark}

\subsubsection{Examples}

Our set-up includes many examples that do not appear in \cite{CIJ} and \cite{AS}.
\begin{example}\label{ex-exchange-divisor}
   We consider $\mathbb P^3$ and a smooth quartic threefold $Q_4$ relative to their smooth anticanonical divisors respectively. Since $\mathbb P^3$ and the quartic threefold are both hypersurfaces of the same ambient toric variety $\mathbb P^4$, there is no need to perform wall-crossing. Then we will see that the ambient parts of their relative quantum cohomology are identical. 
   
   This can also be seen from their mirrors. The generic fibers of their mirror Landau--Ginzburg models are both mirror to quartic $K3$ surfaces. Therefore, we have two families of $M_2$-polarized $K3$ surfaces. Their relative periods have been computed in \cite{DKY}. On the other hand, in the Fanosearch program (see, for example, \cite{CCGGK}), one considers a Laurent polynomial as the mirror to a Fano variety. As mentioned in \cite{DKY}, the classical periods computed from the Laurent polynomials agree with the relative periods in \cite{DKY}. This is another evidence that one should consider a Landau--Ginzburg model as a mirror to a smooth log Calabi--Yau pair. The classical periods (and relative periods) of $(\mathbb P^3,K3)$ and $(Q_4,K3)$ are:
    \[
 f_{(\mathbb P^3,K3)}(t)=\sum_{d\geq 0}\frac{(4d)!}{(d!)^4}t^{4d}, \quad \text{and} \quad  f_{(Q_4,K3)}(t)=\sum_{d\geq 0}\frac{(4d)!}{(d!)^4}t^{d}.
 \]
They are the same up to a simple change of variables: $t^4\mapsto t$. This is actually because of the difference between the generalized functional invariants of the two families of $M_2$-polarized $K3$ surfaces which are Landau--Ginzburg mirrors of $(\mathbb P^3,K3)$ and $(Q_4,K3)$. 

The $H$-functions for these two smooth pairs are as follows
\[
H_{(\mathbb P^3,K3)}(y)=e^{\frac{H\log y}{2\pi i}}\sum_{d\geq 0} y^d \frac{\Gamma(1+\frac{4H}{2\pi i}+4d)}{\Gamma (1+\frac{H}{2\pi i}+d)^4}[\textbf{1}]_{4d},
\]
and
\[
H_{(Q_4,K3)}(y)=e^{\frac{H\log y}{2\pi i}}\sum_{d\geq 0} y^d \frac{\Gamma(1+\frac{4H}{2\pi i}+4d)}{\Gamma (1+\frac{H}{2\pi i}+d)^4}[\textbf{1}]_{d}.
\]
Therefore, we have
\[
H_{(\mathbb P^3,K3),4d}(y)=H_{(Q_4,K3),d}(y).
\]
 \end{example}
 
  In general, given a log Calabi--Yau pair $(X,D)$, periods of a Calabi-Yau manifold $D$ and the relative periods of the pair $(X,D)$ are not the same. But they are related by a change of variable, called the generalized functional invariant map in \cite{DKY}. 
  \begin{example}
  Let $X$ be a complete intersection of bidegrees $(1,1)$ and $(4,0)$ in $\mathbb P^4\times \mathbb P^1$. Its smooth anticanonical hypersurface, whose bidegree is $(0,1)$, is a quartic $K3$ surface in $\mathbb P^3$. On the other hand, we can consider a complete intersection of bidegrees $(1,1)$ and $(0,1)$ in $\mathbb P^4\times \mathbb P^1$, which is just $\mathbb P^3$. Its smooth anticanonical hypersurface is a quartic $K3$ surface. Here, we exchange the roles of the divisors $(4,0)$ and $(0,1)$.
  
  In \cite{DKY}*{Section 3.2.3}, the relative period for $(X,K3)$ is defined as a pullback of the period of the quartic $K3$ via the generalized functional invariant:
  \[
  f_{(X,K3)}(y_1,y_2)=\sum_{d_1,d_2\geq 0}\frac{(4d_1)!(d_1+d_2)!}{(d_1!)^5(d_2!)}y_1^{d_1}y_2^{d_2}.
  \]
  Hence, we have
  \[
  f_{(X,K3)}(t^4,0)=f_{(\mathbb P^3,K3)}(t).
  \]
  In other words, they are related by a change of variable $t\rightarrow t^4$ and setting $y_2=0$. The $H$-function for $(X,K3)$ is
  \[
H_{(X,K3)}(y_1,y_2)=e^{\frac{H\log y_1+P\log y_2}{2\pi i}}\sum_{d_1,d_2\geq 0} y^d \frac{\Gamma(1+\frac{4H}{2\pi i}+4d_1)\Gamma(1+\frac{H+P}{2\pi i}+d_1+d_2)}{\Gamma (1+\frac{H}{2\pi i}+d_1)^5\Gamma(1+\frac{P}{2\pi i}+d_2)}[\textbf{1}]_{(d_1,0)},
\]
 We have
 \[
i^*H_{(X,K3),(d,0)}(y,0)=H_{(\mathbb P^3,K3),4d}(y), 
 \]
 where $i:\mathbb P^3\subset \mathbb P^4 \hookrightarrow \mathbb P^4\times \mathbb P^1$.
 
%  \textcolor{red}{Do we need to set $P=0$? $H$-function take values in the cohomology of the ambient space. Maybe think about it as a analytic continuation???}

  Note that $X$ is actually a blow-up of a smooth quartic threefold $Q_4$ along a complete intersection of two degree one hypersurfaces in $Q_4$. We also have a relation
  \[
  H_{(X,K3),(d,0)}(y,0)=H_{(Q_4,K3),d}(y).
  \]
  Hence, this example also gives a blow-up formula for relative quantum cohomology. But this is different from Section \ref{sec:toric-pairs} since here we choose smooth divisors, instead of simple normal crossings divisors. In Section \ref{sec:smooth-divisor}, we will study how relative quantum cohomology changes when we choose divisors to be smooth instead of simple normal crossings.
  \end{example}

  \subsubsection{General case}
  
Now we consider toric complete intersections in general. We will use the set-up in Section \ref{sec:complete-intersection-1}. For simplicity, we consider the case when $k=2$. In other words, we have line bundles $L_{1,\pm}$ and $L_{2,\pm}$ over $X_\pm$. Let $s_{i,\pm}$ be regular sections of $L_{i,\pm}\rightarrow X_\pm$ for $i=1,2$. Let $Z_+\subset X_+$ be the hypersurface defined by $s_{1,+}$ and $Z_-\subset X_-$ be the hypersurface defined by $s_{2,-}$. Let $D_{2,+}$ be the smooth divisor of $X_+$ defined by $s_{2,+}$ and $D_{1,-}$ be the smooth divisor of $X_-$ defined by $s_{1,-}$. Let 
\[
D_{Z,+}=(D_{2,+}+D_+)\cap Z_+=D_{2,+}\cap Z_++\sum_{i\in \{M_+\cup M_-\}\setminus S_+ }\bar D_i\cap Z_+
\]
and 
\[
D_{Z,-}=(D_{1,-}+D_-)\cap Z_-=D_{1,-}\cap Z_-+\sum_{i\in \{M_+\cup M_-\}\setminus S_- }\bar D_i\cap Z_-.
\]
Then we can compare the ambient parts of the relative quantum cohomology of $(Z_+,D_{Z,+})$ and $(Z_-,D_{Z,-})$ pulled back from the corresponding relative quantum cohomology of the ambient toric varieties $X_\pm$. The relative $I$-functions for $(Z_+,D_{Z,+})$ and $(Z_-,D_{Z,-})$ are as follows:
\begin{align*}
I_{(Z_+,D_{Z,+})}(y,z)=ze^{t_+/z}\sum_{d\in\mathbb K_{+}}y^{d}\left(\prod_{i\in M_0 }\frac{\prod_{a\leq 0,\langle a\rangle=\langle D_i\cdot d\rangle}(\bar{D}_i+az)}{\prod_{a \leq D_i\cdot d,\langle a\rangle=\langle D_i\cdot d\rangle}(\bar{D}_i+az)}\right)\\
\quad \left(\prod_{0<a\leq v_1\cdot d}(v_1+az)\right)\textbf{1}_{[-d]}I_{D_{Z,+},d},
\end{align*}
and 
\begin{align*}
I_{(Z_-,D_{Z,-})}(y,z)=ze^{t_-/z}\sum_{d\in\mathbb K_{-}}\tilde y^{d}\left(\prod_{i\in M_0}\frac{\prod_{a\leq 0,\langle a\rangle=\langle D_i\cdot d\rangle}(\bar{D}_i+az)}{\prod_{a \leq D_i\cdot d,\langle a\rangle=\langle D_i\cdot d\rangle}(\bar{D}_i+az)}\right)\\
\quad \left(\prod_{0<a\leq v_2\cdot d}(v_2+az)\right)\textbf{1}_{[-d]}I_{D_{Z,-},d},
\end{align*}
where
\[
I_{D_{Z,+},d}=\frac{\prod_{0<a< v_2\cdot d}(v_2+az)}{\prod_{i\in I_+, D_i\cdot d>0}(\bar D_i+(D_i\cdot d)z)}[\textbf{1}]_{(-D_i\cdot d)_{i\in I_+},v_2\cdot d},
\]
and
\[
I_{D_{Z,-},d}=\frac{\prod_{0<a<v_1\cdot d}(v_1+az)}{\prod_{i\in I_-, D_i\cdot d>0}(\bar D_i+(D_i\cdot d)z)}[\textbf{1}]_{(-D_i\cdot d)_{i\in I_-},v_1\cdot d}.
\]

The $H$-functions are
\begin{align}
H_{(Z_+,D_{Z,+})}(y)=e^{\frac{t_+}{2\pi i}}\sum_{d\in\mathbb K_{+}}y^d\left(\frac{
\prod_{i=1}^2 \Gamma(1+\frac{v_i}{z}+v_i\cdot d)
}{\prod_{i\in M_0}\Gamma (1+\frac{\bar D_i}{z}+D_i\cdot d)}\right)\textbf{1}_{[d]}[\textbf{1}]_{(D_i\cdot d)_{i\in I_+},v_2\cdot d}.
\end{align}
and
\begin{align}
H_{(Z_-,D_{Z,-})}(y)=e^{\frac{t_-}{2\pi i}}\sum_{d\in\mathbb K_{-}}y^d\left(\frac{
\prod_{i=1}^2 \Gamma(1+\frac{v_i}{z}+v_i\cdot d)
}{\prod_{i\in M_0}\Gamma (1+\frac{\bar D_i}{z}+D_i\cdot d)}\right)\textbf{1}_{[d]}[\textbf{1}]_{(D_i\cdot d)_{i\in I_-},v_1\cdot d}.
\end{align}

The $H$-functions of $(Z_+,D_{Z,+})$ and $(Z_-,D_{Z,-})$ are easily identified following directly from the expression of the $H$-functions. 
\begin{theorem}\label{thm-exchange-divisor}
For any $d_+\in \mathbb K_+$, $d_-\in \mathbb K_-$, such that $d_+-d_-\in \mathbb Q e$, let $f_\pm=[d_\pm]$, $\vec d_+=((D_i\cdot d_+)_{i\in I_+},v_2\cdot d_+)$ and $\vec d_-=((D_i\cdot d_-)_{i\in I_-},v_1\cdot d_-)$. We have
\[
H_{(Z_+,D_{Z,+}),f_+,\vec d_+}=H_{(Z_-,D_{Z,-}),f_-,\vec d_-} 
\]
with a change of variable (\ref{change-of-variables}).
\end{theorem}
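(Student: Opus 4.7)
The plan is to mirror the proof of Theorem~\ref{thm-toric-ci} while carefully tracking the swap of roles between $L_{1,\pm}$ and $L_{2,\pm}$. First I would extract the coefficients $H_{(Z_+,D_{Z,+}),f_+,\vec d_+}$ and $H_{(Z_-,D_{Z,-}),f_-,\vec d_-}$ directly from the explicit formulas for the $H$-functions stated just above the theorem. The decisive observation is that on both sides the numerator contains both $\Gamma(1+v_1/z+v_1\cdot d)$ and $\Gamma(1+v_2/z+v_2\cdot d)$; the hypergeometric factors coming from the two line bundles are fully symmetric in $i=1,2$, regardless of which $L_{i,\pm}$ is used to cut out the ambient hypersurface $Z_\pm$ and which furnishes the smooth relative component. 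This is what makes the ``exchange of divisors'' invisible at the level of the Gamma-factors.

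Next I would verify the coefficient-level equality using three observations, each an immediate consequence of the setup. First, for $i\in M_0$ we have $D_i\cdot e=0$, so the hypothesis $d_+-d_-\in\mathbb Q e$ gives $D_i\cdot d_+=D_i\cdot d_-$, hence the denominators $\prod_{i\in M_0}\Gamma(1+\bar D_i/z+D_i\cdot d)$ agree. Second, by the standing assumption $E_i\in\overline{C_W}$ imposed in Section~\ref{sec:complete-intersection-1}, the classes $v_i$ pair trivially with $e$, so $v_i\cdot d_+=v_i\cdot d_-$ for both $i=1,2$, and the two numerator Gamma-factors agree termwise. Third, the prefactors $e^{t_\pm/2\pi i}\,y^{d_\pm}$ transform into each other under the change of variables (\ref{change-of-variables}), exactly as in the proof of Theorem~\ref{thm-toric-stack-snc}.

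The remaining content of the statement is the matching of sector labels. The relative sector label on the $+$ side is $\bigl((D_i\cdot d_+)_{i\in I_+},\,v_2\cdot d_+\bigr)$, recording contact orders along the components of $D_+\cap Z_+$ together with the maximal contact order along $D_{2,+}\cap Z_+$ forced by the quantum Lefschetz construction; similarly on the $-$ side one has $\bigl((D_i\cdot d_-)_{i\in I_-},\,v_1\cdot d_-\bigr)$. Under the three identities above and the partition $M_+\cup M_-=I_+\sqcup(M_+\cup M_-)\cap S_+=I_-\sqcup(M_+\cup M_-)\cap S_-$ from Section~\ref{sec:wall-crossing}, these tuples correspond entry-by-entry, and the theorem reduces to the equality of the explicit Gamma expressions.

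The only genuine bookkeeping obstacle is the mismatch of extended data between the two sides (cf.\ Remark~\ref{rmk-extended-data}): the extended data of $(Z_+,D_{Z,+})$ and $(Z_-,D_{Z,-})$ are governed by $S_+$ and $S_-$, which typically differ. As already illustrated in Section~\ref{sec:example-blow-up}, one must specialize the corresponding auxiliary variables to zero on one side in order to realize the identification. Once this specialization is incorporated in parallel with the change of variables (\ref{change-of-variables}), observations (i)--(iii) above give the equality of coefficients, completing the proof.
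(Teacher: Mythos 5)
Your proposal is correct and follows essentially the same route as the paper, which simply observes that the identity ``follows directly from the expression of the $H$-functions'': the scalar Gamma-factors are symmetric in $v_1,v_2$, the $M_0$-factors and $v_i$-pairings are unchanged along $\mathbb{Q}e$ because $D_i\cdot e=0$ for $i\in M_0$ and $E_i\in\overline{C_W}$, and the prefactors match under the change of variables (\ref{change-of-variables}). Your additional remarks on the sector labels and the extended-data specialization just make explicit what the paper leaves implicit (cf.\ Remark \ref{rmk-extended-data}), so there is no gap.
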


\section{Relative quantum cohomology with non-toric divisors}\label{sec:smooth-divisor}

In Section \ref{sec:toric-pairs}, the irreducible components of $D_+$ and $D_-$ are chosen to be toric invariant divisors. In this section, we consider the case when the irreducible components of $D_+$ and $D_-$ are not necessarily toric invariant divisors. Without loss of generality, we assume that $D_+$ and $D_-$ are smooth divisors. Therefore, we indeed consider the relative Gromov--Witten theory of \cite{FWY}.

We use the toric set-up in Section \ref{sec:toric-birational}. Let $D_\pm=\sum_{i=0}^m a_{i,\pm} D_i$.
Note that
\begin{align*}
    f_+^*(D_+)=\sum_{i=1}^m a_{i,+}\tilde{D}_i+\left(\sum_{i=1}^m a_{i,+}D_i\cdot e\right)E,
\end{align*}
and
\begin{align*}
    f_-^*(D_-)=\sum_{i=1}^m a_{i,-}\tilde{D}_i-\left(\sum_{i=1}^m a_{i,-}D_i\cdot e\right)E.
\end{align*}
Recall that 
\[
K_{\tilde X}=f_-^*K_{X_-}+\left(1+\sum_{i\in M_-} D_i\cdot e\right)E=f_+^*K_{X_+}+\left(1-\sum_{i\in M_+} D_i\cdot e\right)E.
\]
and
\[
f_+^*K_{X_+}=f_-^*K_{X_-}+\left(\sum_{i=1}^m D_i\cdot e\right)E.
\]
We would like to choose $a_{i,\pm}$ such that
\[
f_+^*(K_{X_+}+D_+)=f_-^*(K_{X_-}+D_-).
\]
For example we can set
\[   
a_{i,\pm}=\left\{
\begin{array}{ll}
      1 & i\in (M_+\cup M_-) \setminus S_\pm\\
      0 & \text{otherwise}.
\end{array} 
\right. \]
Then we have $D_+=\sum_{i\in (M_+\cup M_-)\setminus S_+} D_i$ and $D_-=\sum_{i\in (M_+\cup M_-)\setminus S_-} D_i$.
We also assume that $D_+$ and $D_-$ are nef. 

\begin{remark}
If $D_+$ and $D_-$ are not nef, we may also allow some $a_{i,+}=a_{i,-}\neq 0$ for $i\in M_0$ such that $D_+$ and $D_-$ are nef. The results in this section will not be affected. For simplicity, we choose $D_+=\sum_{i\in (M_+\cup M_-)\setminus S_+} D_i$ and $D_-=\sum_{i\in (M_+\cup M_-)\setminus S_-} D_i$ and assume that $D_+$ and $D_-$ are smooth and nef. The proof works for more general choice of $D_\pm$. 
\end{remark}

\subsection{Via local-orbifold correspondence}\label{sec:local-orbifold}

In this section, we specialize to the genus zero invariants with maximal contact orders. According to the local-relative/orbifold correspondence of \cite{vGGR}, \cite{TY20b} and \cite{BNTY}, genus zero relative Gromov--Witten invariants of $(X,D)$(or the formal Gromov--Witten invariants of infinite root stacks for simple normal crossings pairs) with maximal contact orders coincide with genus zero local Gromov--Witten invariants of $\mathcal O_X(-D)$ (or the direct sum of line bundles $\mathcal O_X(-D_i)$ for simple normal crossings pairs). Recall that we consider the case when $D_+$ and $D_-$ are smooth. The case when they are simple normal crossings works similarly. 

We compare the total space of $T_+:=\mathcal O_{X_+}(-D_+)$ and $T_-:=\mathcal O_{X_-}(-D_-)$. Note that, what we consider here is slightly more general than what was considered in \cite{MS}, where they consider the case of blow-ups. In here, we include the cases when  $\phi: \xymatrix{
X_+\ar@{-->}[r]& X_-}$ is a discrepant resolution in general (including a weighted blow-up). We will also consider the case of toric complete intersections.

Given a divisor $D=\sum_{i=1}^m a_iD_i$ with $a_i\in \mathbb Z$. The support function $\phi_D$ for $D$ is
\[
\phi_D: N\otimes \mathbb R\rightarrow \mathbb R
\]
a piecewise-linear function characterized by the condition: $\phi_D(\bar{b}_i)=-a_i$ for $i\not\in S$. We impose the following assumptions:
\begin{assumption}\label{assum-convex}
The function satisfies the following:
\begin{itemize}
    \item For each $\sigma\in \Sigma_\omega$, there exists an element $m_\sigma\in N^\vee$ such that
    \[
    \phi_D(n)=\langle m_\sigma,n\rangle,
    \]
    for $n\in |\sigma|$;
    \item The graph of $\phi_D$ is convex and $\phi_D(\bar{b}_i)\geq a_i$ for $i\in S$.
\end{itemize}
\end{assumption}

By \cite{MS}*{Lemma 3.8}, Assumption \ref{assum-convex} implies that $D$ is basepoint free and $\mathcal O_{X}(D)$ is a convex line bundle on $X$. We assume that $D_+$ and $D_-$ satisfy Assumption \ref{assum-convex}.

Both $T_+$ and $T_-$ can be realized as GIT quotients. We define
\[
\hat{\beta}:\mathbb Z^{m+1}\rightarrow N\oplus \mathbb Z
\]
by
\[   
\hat{\beta}=\left\{
\begin{array}{ll}
      (\beta(e_i),1) & i\in\{1,\ldots,m\} \text{ and }i\in M_+\cup M_- \\
      (\beta(e_i),0) & i\in\{1,\ldots,m\} \text{ and } i\in M_0 \\
      (0, 1) & \text{If $i=m+1$}.
\end{array} 
\right. \]
We consider the fan sequence:
\begin{equation}\label{fan-seq-local}
0 \longrightarrow \mathbb{L}:=\text{ker}(\hat{\beta}) \longrightarrow \mathbb{Z}^{m+1} \stackrel{\hat{\beta}}{\longrightarrow} N\oplus \mathbb Z\longrightarrow 0,
\end{equation}
where the map $\mathbb L\rightarrow \mathbb Z^{m+1}$ is given by $(D_1,\ldots,D_m,-D)$ and $D=\sum_{i\in M_+\cup M_-} D_i$.

Then we consider the wall-crossing. Recall that, there are three types of toric wall-crossing as mentioned in Section \ref{sec:wall-crossing}: Type I is flip and flop; Type II is discrepant/crepant resolution; Type III is gerbe constructions and root constructions. Type III is relatively well-understood in the literature, so we do not plan to study it here. For Type I and Type II, without loss of generality, we assume that $\sum_{i=1}^m D_i\cdot e\geq 0$. In the case of flip, we have $\sum_{i=1}^m D_i\cdot e> 0$ and $\sum_{i=1}^m \bar{D}_i$ is not nef in $X_-$. So we will not study flip here. Therefore, we will focus on the case when $\phi$ is a discrepant resolution, although we will mention the crepant case as well.

Let $\hat\Sigma_{\omega_\pm}$ be the fans corresponding to the GIT quotients $[\mathbb C^{m+1}\sslash_{\omega_\pm}K]$, where the action of $K$ on $\mathbb Z^{m+1}$ is given by $(D_1,\ldots,D_m,-D)$. Define $\hat{b}_i:=\hat{\beta}(e_i)$.  The following proposition follows from \cite{MS}*{Proposition 5.1}
\begin{prop}\label{prop-T-+}
The total space $T_+$ is the toric stack which can be expressed as the GIT quotient $[\mathbb C^{m+1}\sslash_{\omega_+}K]$.
\end{prop}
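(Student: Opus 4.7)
The plan is to identify the toric stack $[\mathbb{C}^{m+1}\sslash_{\omega_+}K]$ with $\mathrm{Tot}(\mathcal{O}_{X_+}(-D_+))$ by comparing anticones and then constructing a line bundle structure over $X_+$ directly from the GIT data. The lift $\hat{\beta}$ of $\beta$ adds one extra ray $\hat{b}_{m+1}=(0,1)\in N\oplus\mathbb{Z}$ while the character on the new coordinate $y_{m+1}$ is $-D$. Since $\bar{D}_i=0$ in $H^2(X_+)$ for $i\in S_+$, the character $D=\sum_{i\in M_+\cup M_-}D_i$ projects to $D_+$ in $H^2(X_+)$, so the bookkeeping of characters and cohomology classes matches.

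The first main step is to compute $\hat{\mathcal{A}}_{\omega_+}$. An index set $I\subseteq\{1,\dots,m+1\}$ lies in $\hat{\mathcal{A}}_{\omega_+}$ precisely when $\omega_+$ lies in the positive span of $\{D_i:i\in I\cap\{1,\dots,m\}\}\cup\{-D:m+1\in I\}$. If $m+1\notin I$, this recovers $\mathcal{A}_{\omega_+}$ exactly. If $m+1\in I$, the condition becomes $\omega_++tD\in\sum_{i\in I\cap\{1,\dots,m\}}\mathbb{R}_{>0}D_i$ for some $t>0$. Here is where Assumption~\ref{assum-convex} enters: convexity of $\phi_D$ together with the nefness of $D_+$ on $X_+$ ensures that $D$ lies in the closure of the extended ample cone $\overline{C_+}$, so $\omega_++tD\in C_+$ for small $t>0$, and (by taking $t$ throughout a neighborhood) one obtains exactly $\hat{\mathcal{A}}_{\omega_+}=\mathcal{A}_{\omega_+}\sqcup\{I\cup\{m+1\}:I\in\mathcal{A}_{\omega_+}\}$. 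The convexity also rules out spurious minimal anticones on the side of $m+1$, ensuring the fiber direction is a single $\mathbb{A}^1$.

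The second step is to promote this to a line bundle description. The inclusion $\mathbb{Z}^m\hookrightarrow\mathbb{Z}^{m+1}$ on the first $m$ coordinates is compatible with the $K$-actions and induces a flat toric morphism
\[
\pi:[\mathbb{C}^{m+1}\sslash_{\omega_+}K]\longrightarrow[\mathbb{C}^m\sslash_{\omega_+}K]=X_+.
\]
From the anticone decomposition above, $\pi^{-1}(\mathcal{U}_I/K)$ contains $\mathcal{U}_{I\cup\{m+1\}}/K$ on which $y_{m+1}$ is an unconstrained coordinate, so $\pi$ is Zariski-locally trivial with fiber $\mathbb{A}^1$. Since $K$ acts on $y_{m+1}$ through the character $-D$, the sheaf of sections of $\pi$ is generated by functions of $K$-weight $+D$, namely $\mathcal{O}_{X_+}(D_+)$ (using $D\mapsto D_+$ under the projection to $H^2(X_+)$), whose total space is precisely $\mathrm{Tot}(\mathcal{O}_{X_+}(-D_+))=T_+$ under the standard contravariant identification between line bundle characters and total spaces.

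The main obstacle is the careful use of Assumption~\ref{assum-convex}: without the convexity of $\phi_D$ and the control it gives over $D$ lying in $\overline{C_+}$, one cannot rule out additional anticones in $\hat{\mathcal{A}}_{\omega_+}$ containing $m+1$ (which would correspond to blow-downs of the zero section), nor guarantee that the projection $\pi$ is everywhere defined. The convexity assumption is exactly what makes $\mathcal{O}_{X_+}(D_+)$ a basepoint-free, convex line bundle, which is what is needed for the GIT construction with the added weight $-D$ to reproduce the total space rather than a partial compactification or a contraction of it.
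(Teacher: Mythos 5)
Your route runs on the anticone/GIT side, whereas the paper's proof runs on the fan side: it lifts each cone $\sigma_I\in\Sigma_+$ to $\hat\sigma_I$ and then invokes the convexity argument of \cite{MS}*{Proposition 5.1} to conclude that no further maximal cones occur, after which the identification with $T_+$ is immediate. These are dual formulations of the same computation, and your final bookkeeping (the fiber coordinate has weight $-D$; $\bar D_i=0$ for $i\in S_+$, so the character $D$ induces the class of $D_+$ on $X_+$) is correct. The problem is that the crux is asserted rather than proved. What you actually verify --- that $D\in\overline{C_+}$ forces $\omega_++tD\in C_+$, hence $I\cup\{m+1\}\in\hat{\mathcal A}_{\omega_+}$ for every $I\in\mathcal A_{\omega_+}$ --- is the easy inclusion, and it is not the one needed: the charts $(\mathbb C^\times)^{I\cup\{m+1\}}\times\mathbb C^{\overline{I\cup\{m+1\}}}$ are already contained in $\mathcal U_{\omega_+}\times\mathbb C$, which is covered by the anticones not containing $m+1$. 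What the identification of the semistable locus requires is the reverse statement: every $J\in\hat{\mathcal A}_{\omega_+}$ with $m+1\in J$ has $J\setminus\{m+1\}$ containing some anticone of $\omega_+$ (equivalently, $\omega_+$ lies in the closed cone spanned by $\{D_i\}_{i\in J\setminus\{m+1\}}$); otherwise the semistable locus would be strictly larger than $\mathcal U_{\omega_+}\times\mathbb C$ and the quotient would be a modification of $T_+$, not $T_+$. Concretely, one must exclude $J'\subset\{1,\ldots,m\}$ with $\omega_++tD\in\angle_{J'}$ for some $t>0$ but $\omega_+$ not in the closed cone on $\{D_i\}_{i\in J'}$. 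Your phrase ``convexity rules out spurious minimal anticones'' names the right hypothesis but gives no argument, and ``taking $t$ throughout a neighborhood'' does not yield this reverse inclusion. This is exactly the step the paper delegates to Assumption \ref{assum-convex} via \cite{MS}*{Proposition 5.1}; you should either reproduce that argument (on cones or, dually, on anticones) or cite it, since it carries the entire content of the proposition.

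A smaller point: the sentence identifying the bundle conflates sections with fiberwise-linear functions. The $K$-invariant functions on $\mathcal U_{\omega_+}\times\mathbb C$ that are linear in $y_{m+1}$ are of the form $y_{m+1}f$ with $f$ of weight $D$, so they are sections of $\mathcal O_{X_+}(D_+)$, which is the \emph{dual} of the bundle of which $\pi$ is the total space; the sheaf of sections of $\pi$ itself is $\mathcal O_{X_+}(-D_+)$, not $\mathcal O_{X_+}(D_+)$. Your final conclusion, that the quotient is the total space of $\mathcal O_{X_+}(-D_+)=T_+$, is the correct one, but state the duality the right way around rather than appealing to a ``contravariant identification'' after asserting the wrong sheaf of sections.
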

\begin{proof}
Given a cone $\sigma_I\in \Sigma_{+}$, let $\hat\sigma_I$ be the cone generated by $\overline{\hat{b}_i}$ for $i\in I\cup \{m+1\}$. By the definition of the anticones, $\sigma_I\in \Sigma_{+}$ implies  $\hat\sigma_I\in \hat\Sigma_{+}$. Following the proof of \cite{MS}*{Proposition 5.1}, the convexity assumption (Assumption \ref{assum-convex}) implies that $\{\hat{\sigma}\}_{\sigma\in \Sigma_{+}}$ contains all the cones of $\Sigma_{+}$. Then it is straightforward that the GIT quotient $[\mathbb C^{m+1}\sslash_{\omega_+}K]$ is the total space $T_+$.
\end{proof}

Now we would like to see what $[\mathbb C^{m+1}\sslash_{\omega_-}K]$ is. 

\begin{prop}\label{prop-wall-crossing-local}
If $\phi: \xymatrix{
X_+\ar@{-->}[r]& X_-}$ is a crepant transformation, then the induced birational morphism $$\hat{\phi}: \xymatrix{
[\mathbb C^{m+1}\sslash_{\omega_+}K]\ar@{-->}[r]& [\mathbb C^{m+1}\sslash_{\omega_-}K]}$$ is a crepant transformation of the same type. If $\phi$ is a discrepant resolution, the morphism $$\hat{\phi}: \xymatrix{
[\mathbb C^{m+1}\sslash_{\omega_+}K]\ar@{-->}[r]& [\mathbb C^{m+1}\sslash_{\omega_-}K]}$$ is a flop.
\end{prop}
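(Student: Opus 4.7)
The strategy is to view Proposition \ref{prop-T-+} as one half of an identification of both $T_+$ and $T_-$ with GIT quotients of $\mathbb C^{m+1}$ by the same $K$-action coming from the extended fan sequence \eqref{fan-seq-local}, and then to read off the type of the induced toric wall-crossing $\hat\phi$ from the classification recalled in Section \ref{sec:wall-crossing}. First I would prove the analog of Proposition \ref{prop-T-+} for $T_-$, namely $T_-=[\mathbb C^{m+1}\sslash_{\omega_-}K]$. The argument is the same cone-counting one: Assumption \ref{assum-convex} applied to $D_-$ implies that for every $\sigma_I\in\Sigma_-$ the cone $\hat\sigma_I$ spanned by $\{\hat b_i\}_{i\in I\cup\{m+1\}}$ belongs to $\hat\Sigma_{\omega_-}$, and a dimension count then shows these exhaust $\hat\Sigma_{\omega_-}$.

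Next I would analyze the wall-crossing $\hat\phi$. Because $\mathbb L$ itself is unchanged in passing to the extended GIT data, the wall separating $\omega_+$ from $\omega_-$ remains the original hyperplane $W\subset\mathbb L^\vee\otimes\mathbb R$ with primitive normal $e$. The only new piece of data is the pairing of $e$ with the extra character $-D$; using $D_i\cdot e=0$ for $i\in M_0$ this evaluates to
\[
(-D)\cdot e \;=\; -\sum_{i\in M_+\cup M_-}D_i\cdot e \;=\; -\sum_{i=1}^m D_i\cdot e.
\]
In the crepant case this vanishes, so the new index $m+1$ is placed in $\hat M_0$ and $\hat M_\pm=M_\pm$. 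In the discrepant case, where by assumption $\sum_{i=1}^m D_i\cdot e>0$, it is strictly negative, so $m+1\in\hat M_-$, and one gets $\hat M_+=M_+$, $\hat M_-=M_-\sqcup\{m+1\}$.

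To identify the type of $\hat\phi$, observe that the sum of all characters of the new $K$-action is $\sum_{i=1}^m D_i-D=\sum_{i\in M_0}D_i$, which pairs trivially with $e$; hence $\hat\phi$ is always crepant as a toric wall-crossing. In the crepant case $\hat M_\pm=M_\pm$, so the classification in Section \ref{sec:wall-crossing} gives a wall-crossing of exactly the same type as $\phi$ (flop, crepant resolution, or root/gerbe construction). In the discrepant case one always has $\#\hat M_-\geq\#M_-+1\geq 2$; the discrepant resolution assumption together with the sign convention $\sum D_i\cdot e>0$ and the nefness of $D_\pm$ forces us into the ``removing one ray'' branch of Type II, where $M_-=\{j\}$ and $\#M_+\geq 2$. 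Consequently $\#\hat M_+\geq 2$ as well, and a crepant toric wall-crossing with $\#\hat M_\pm\geq 2$ is precisely a flop.

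The main technical hurdle is this last branch check: verifying that the sign convention $\sum D_i\cdot e>0$ together with the nefness of $D_\pm$ really isolates Case A of Type II (contracting a toric divisor on the $X_+$ side), rather than the opposite ``adding one ray'' branch. Once that is pinned down the proof reduces to straightforward bookkeeping against Section \ref{sec:wall-crossing}, including a routine check that $m+1\notin\hat S_\pm$, so that $m+1$ is a genuine ray on both sides of the new wall-crossing --- consistently with $T_\pm$ carrying the zero section of $\mathcal O_{X_\pm}(-D_\pm)$ as a toric divisor.
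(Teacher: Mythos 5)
Your wall-crossing computation is exactly the paper's argument, and it is correct: the only new weight is $-D$ with $D=\sum_{i\in M_+\cup M_-}D_i$, so $(-D)\cdot e=-\sum_{i=1}^m D_i\cdot e$; the total character sum $\sum_{i=1}^m D_i-D=\sum_{i\in M_0}D_i$ pairs to zero with $e$, so $\hat\phi$ is crepant; in the crepant case $m+1\in\hat M_0$ and $\hat M_\pm=M_\pm$, so the type is unchanged, while in the discrepant case $m+1\in\hat M_-$, giving $\#\hat M_-=\#M_-+1=2$ and $\#\hat M_+=\#M_+\geq 2$, i.e.\ a flop. Concerning the ``branch check'' you flag as the main hurdle: the paper simply reads ``discrepant resolution $\phi: X_+\dashedrightarrow X_-$'' (with the standing sign choice $\sum_{i=1}^m D_i\cdot e>0$) as the removing-one-ray case $M_-=\{j\}$, $\#M_+\geq 2$, so no further argument appears there; your nefness idea does close it if you want it explicit, since in the opposite branch the exceptional curve class of $X_-\rightarrow|X_+|$ is a positive multiple of $-e$ and pairs with $D_-$ to $-\sum_{i=1}^m D_i\cdot e<0$, contradicting nefness --- the same mechanism the paper uses to exclude flips.

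The genuine problem is your opening step: the claimed identification $T_-=[\mathbb C^{m+1}\sslash_{\omega_-}K]$ is false precisely in the discrepant case, which is the case of interest. The cones $\hat\sigma_I$ for $\sigma_I\in\Sigma_-$ do lie in the fan of $[\mathbb C^{m+1}\sslash_{\omega_-}K]$ (this is Proposition \ref{prop-cone-T-}), but your ``dimension count shows these exhaust'' step fails: for every $\sigma_I\in\Sigma_-$ with $M_+\subset I$ the cone on $I\cup M_-$ also appears, and the quotient is the partial compactification $\overline{T}_-\supsetneq T_-$ of Proposition \ref{prop-compactification}; equality holds only in the crepant case. This does not damage the proposition itself, which concerns only the two GIT quotients, but the discrepancy between $\overline{T}_-$ and $T_-$ is exactly what forces the extra analytic continuation variable and the specialization $y'=0$ in Theorem \ref{thm-specialization} and Theorem \ref{thm-narrow-qd}, so the identification should be removed from your plan rather than proved.
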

\begin{proof}
Since we choose $D_{m+1}:=D=\sum_{i\in M_+\cup M_-} D_i$, it is clear that the morphism $\hat{\phi}$ is crepant. Recall that we assume $\sum_{i=1}^m D_i\cdot e\geq 0$. Then $m+1\in \hat{M}_{0}$ if $\sum_{i=1}^m D_i\cdot e= 0$ and $m+1\in \hat{M}_{-}$ if $\sum_{i=1}^m D_i\cdot e> 0$. Therefore, if $\phi: \xymatrix{
X_+\ar@{-->}[r]& X_-}$ is a crepant transformation, then $\hat{\phi}$ is also a crepant transformation of the same type. When $\phi: \xymatrix{
X_+\ar@{-->}[r]& X_-}$ is a discrepant resolution, we have $\#(\hat M_-)=2$ and $\#(\hat M_+)\geq 2$. Therefore, $\hat{\phi}$ is a flop.
\end{proof}

Then we would like to understand the relation between $[\mathbb C^{m+1}\sslash_{\omega_-}K]$ and $T_-$.
\begin{prop}\label{prop-cone-T-}
For each cone $\sigma_I\in \Sigma_{-}$, the cone $\hat{\sigma}_I$, which is generated by $\overline{\hat{b}_i}$ for $i\in I\cup \{m+1\}$, is in $\hat{\Sigma}_{-}$.
   \end{prop}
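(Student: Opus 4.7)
The plan is to translate the statement into the language of anticones for the extended GIT problem and verify directly that the relevant anticone condition is inherited from the original GIT problem for $X_-$.

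First, I would unpack the definitions. Recall that $\sigma_I \in \Sigma_-$ means $\bar I \in \mathcal A_-$, i.e.\ $\omega_- \in \angle_{\bar I} \subset \mathbb L^\vee \otimes \mathbb R$, where $\angle_{\bar I}$ is the cone generated by $\{D_i\}_{i \in \bar I}$. The cone $\hat\sigma_I$ is generated by $\overline{\hat b_i}$ for $i \in I \cup \{m+1\}$, so its complementary index set inside $\{1,\ldots,m+1\}$ is exactly $\bar I \subset \{1,\ldots,m\}$. Hence $\hat\sigma_I \in \hat\Sigma_-$ is equivalent to the statement that $\bar I$ is an anticone for the extended GIT datum at the stability $\omega_-$.

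Next, I would track the kernel and the characters. From the fan sequence (\ref{fan-seq-local}) together with the definition of $\hat\beta$, the isomorphism
\[
\hat{\mathbb L} \xrightarrow{\sim} \mathbb L, \qquad (a_1,\ldots,a_m,a_{m+1}) \mapsto (a_1,\ldots,a_m),
\]
has inverse $a \mapsto (a, -D\cdot a)$ with $D = \sum_{i\in M_+\cup M_-} D_i$. Under the dual identification $\mathbb L^\vee \cong \hat{\mathbb L}^\vee$, the character $\hat D_i$ restricted to $\hat{\mathbb L}$ agrees with $D_i \in \mathbb L^\vee$ for each $i \in \{1,\ldots,m\}$, while $\hat D_{m+1}$ corresponds to $-D$. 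In particular, for any subset $J \subset \{1,\ldots,m\}$, the extended angle $\hat\angle_J$ generated by $\{\hat D_i\}_{i\in J}$ coincides with the original angle $\angle_J$.

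Applying this with $J = \bar I$: by assumption $\omega_- \in \angle_{\bar I}$, so $\omega_- \in \hat\angle_{\bar I}$, which is precisely the anticone condition showing $\bar I \in \hat{\mathcal A}_-$. Therefore $\hat\sigma_I \in \hat\Sigma_-$, as desired. The only mildly subtle point, and where I would be careful, is the character identification in the middle step; everything else is a direct unwinding of definitions. I expect the proof to be short, and I would also note (for context with Proposition \ref{prop-wall-crossing-local}) that the converse inclusion fails in general, reflecting the fact that $[\mathbb C^{m+1}\sslash_{\omega_-} K]$ and $T_-$ differ by a flop when $\phi$ is a discrepant resolution.
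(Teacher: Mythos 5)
Your proposal is correct and takes essentially the same route as the paper: since the extended GIT datum uses the same lattice $\mathbb L$ with characters $(D_1,\ldots,D_m,-D)$, the anticone condition $\omega_-\in\angle_{\bar I}$ for $\sigma_I\in\Sigma_-$ immediately gives $\bar I$ as an anticone for the extended problem, which is exactly the "by the definition of the anticones" step of Proposition \ref{prop-T-+} that the paper invokes here. You also correctly note that this direction needs neither the convexity assumption nor a converse inclusion (which indeed fails in the discrepant case, cf.\ Proposition \ref{prop-compactification}).
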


\begin{proof}
This follows from an identical argument as in Proposition \ref{prop-T-+}.
\end{proof}

The fan formed by the cones $\hat{\sigma}_I$ in Proposition \ref{prop-cone-T-} gives the toric stack $T_-$. 
In the crepant case, these are all the cones of $\hat{\Sigma}_{-}$ and we simply have
\begin{proposition}
For crepant toric wall-crossings $\phi: \xymatrix{
X_+\ar@{-->}[r]& X_-}$, we have $[\mathbb C^{m+1}\sslash_{\omega_-}K]=T_-$.
\end{proposition}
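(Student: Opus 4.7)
The plan is to mirror the argument of Proposition \ref{prop-T-+}, with the roles of $+$ and $-$ interchanged. Proposition \ref{prop-cone-T-} already supplies one half of the equality: for every $\sigma_I \in \Sigma_{-}$ the cone $\hat\sigma_I$ lies in $\hat\Sigma_{-}$, so the fan of $T_{-}$ sits inside the fan of $[\mathbb C^{m+1}\sslash_{\omega_-}K]$. What remains is the reverse inclusion, namely that $\hat\Sigma_{-}$ contains no cones beyond those of the form $\hat\sigma_I$, and that every maximal cone of $\hat\Sigma_{-}$ indeed contains the extra ray $\hat b_{m+1}=(0,1)$.

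To establish this I would transcribe the proof of Proposition \ref{prop-T-+} (which itself follows \cite{MS}*{Proposition 5.1}) with $D_{+}$ replaced by $D_{-}$, invoking Assumption \ref{assum-convex} for $D_{-}$. Convexity of the support function $\phi_{D_-}$ forces $\hat b_{m+1}$ to be adjoined to every maximal cone of $\Sigma_{-}$ and rules out the appearance of any additional maximal cone in $\hat\Sigma_{-}$. Concretely, one translates the convexity condition into a statement about the extended anticones $\hat{\mathcal A}_{-}$, showing that the only anticones $\hat I \in \hat{\mathcal A}_{-}$ containing $m+1$ are of the form $I\cup\{m+1\}$ with $I \in \mathcal A_{-}$.

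The crepancy hypothesis is what makes this symmetric reduction go through cleanly. By Proposition \ref{prop-wall-crossing-local}, crepancy of $\phi$ places $m+1\in \hat M_{0}$, so the new ray is transverse to the wall-crossing direction $e$, and the induced wall-crossing $\hat\phi$ remains of the same combinatorial type as $\phi$; in particular there is genuine symmetry between the two sides. Without crepancy, $\hat\phi$ would be a flop (as noted in Proposition \ref{prop-wall-crossing-local}), and $[\mathbb C^{m+1}\sslash_{\omega_-}K]$ would produce the flop of $T_{+}$ rather than the divisorial model $T_{-}$. The main piece of care, and the only place where a non-trivial verification is needed, is checking the compatibility of Assumption \ref{assum-convex} for $D_{-}$ with the extended GIT data so that the symmetry-in-sign of the argument for $T_{+}$ actually applies; this reduces to $D_{m+1}\cdot e = 0$, which is precisely the crepant condition.
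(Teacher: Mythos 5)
Your overall shape is right (Proposition \ref{prop-cone-T-} gives one containment, and the content is that $\hat\Sigma_{-}$ has no cones beyond the total-space ones), but the mechanism you lean on for the second half does not work as stated. You claim that transcribing the proof of Proposition \ref{prop-T-+}, i.e.\ invoking Assumption \ref{assum-convex} for $D_-$, ``rules out the appearance of any additional maximal cone in $\hat\Sigma_-$''. That cannot be the whole story: the paper assumes Assumption \ref{assum-convex} for $D_-$ in the discrepant case as well, and there Proposition \ref{prop-compactification} shows extra cones \emph{do} appear, so $[\mathbb C^{m+1}\sslash_{\omega_-}K]$ is a strict partial compactification $\overline{T}_-$ of $T_-$ (concretely, for the blow-down $\mathbb F_1\dashedrightarrow\mathbb P^2$ with $D_-$ two lines, $D_-$ is nef with convex support function and yet the quotient strictly contains $T_-=\mathcal O_{\mathbb P^2}(-2)$). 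The asymmetry is not in the convexity of $\phi_{D_-}$ on $\Sigma_-$ but in the hatted GIT data at the vectors of $M_-$, which on the minus side of a Type II wall-crossing are \emph{extended} vectors carrying the coefficient $1$ inherited from $D=\sum_{i\in M_+\cup M_-}D_i$: one must check that $\hat b_j=(\bar b_j,1)$, $j\in M_-$, lies in the support of the total-space fan over $\Sigma_-$, equivalently in the cone generated by $\{\hat b_i\}_{i\in M_+}$. Writing $\bar b_j=\sum_{i\in M_+}\frac{D_i\cdot e}{-D_j\cdot e}\,\bar b_i$, this amounts to $\bigl(\sum_{i\in M_+}D_i\cdot e\bigr)/\bigl(-D_j\cdot e\bigr)\le 1$, which holds (with equality) precisely because $\sum_{i\in M_+}D_i\cdot e=-\sum_{i\in M_-}D_i\cdot e$ in the crepant case. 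You assert that the needed compatibility ``reduces to $D_{m+1}\cdot e=0$'' but never carry out this verification, and without it your argument does not actually distinguish crepant from discrepant.

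Note also that your side remark about $\hat\phi$ having the same combinatorial type is in fact closer to the paper's own (one-line) justification than your main mechanism: since $m+1\in\hat M_0$ in the crepant case (Proposition \ref{prop-wall-crossing-local}), the circuit $\hat M_+\cup\hat M_-$ coincides with $M_+\cup M_-$, so by Remark \ref{rmk-circuit} the modification turning $\hat\Sigma_+$ into $\hat\Sigma_-$ is verbatim the one turning $\Sigma_+$ into $\Sigma_-$; combined with Proposition \ref{prop-T-+} this identifies $\hat\Sigma_-$ with the total-space fan over $\Sigma_-$, uniformly for all three crepant types (flop, crepant resolution, Type III). Promoting that observation from heuristic to proof, or else doing the support-function computation at the extended vectors of $M_-$ indicated above, is the missing step in your write-up.
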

Then we have a theorem that is a direct consequence of the crepant transformation conjecture for toric complete intersections proved in \cite{CIJ}.
\begin{theorem}
The narrow relative quantum D-module (or the $I$-function) of $(X_+,D_+)$ can be analytic continued to the narrow relative quantum D-module (or the $I$-function) of $(X_-,D_-)$.
\end{theorem}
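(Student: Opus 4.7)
The plan is to reduce the statement to the crepant transformation conjecture for toric stacks via the local--relative correspondence. First I would observe that, by restricting $t \in \mathfrak H$ to $t \in \mathfrak H_0$, the narrow relative quantum $D$-module of $(X_\pm,D_\pm)$ is built from genus zero relative invariants with a single relative marking of maximal contact order (inserted as $\tilde T_{\vec 0,k}$) together with descendants. By Theorem~\ref{thm-local-orbifold} (applied in the smooth divisor case), these invariants agree, up to the sign $(-1)^{D_\pm\cdot \beta -1}$, with genus zero local Gromov--Witten invariants of $T_\pm=\mathcal O_{X_\pm}(-D_\pm)$. Hence the narrow relative $I$-function of $(X_\pm,D_\pm)$ coincides (under the identification of state spaces via the local--relative correspondence) with the standard $I$-function of the total space $T_\pm$.

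Next I would invoke Proposition~\ref{prop-T-+} to identify $T_\pm$ with the toric Deligne--Mumford stack $[\mathbb C^{m+1}\sslash_{\omega_\pm}K]$ associated to the extended fan sequence~(\ref{fan-seq-local}). The crepancy assumption $\sum_{i=1}^m D_i\cdot e=0$, together with the choice $D_{m+1}=D=\sum_{i\in M_+\cup M_-}D_i$, ensures via Proposition~\ref{prop-wall-crossing-local} that the induced toric wall-crossing
\[
\hat\phi : [\mathbb C^{m+1}\sslash_{\omega_+}K]\dashrightarrow [\mathbb C^{m+1}\sslash_{\omega_-}K]
\]
is again crepant and of the same combinatorial type as $\phi$; in particular the second GIT chamber produces exactly $T_-$ rather than some other birational model. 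This is the step where I would carefully check the anticones on the $\omega_-$-side to confirm that the cones enumerated in Proposition~\ref{prop-cone-T-} exhaust $\hat\Sigma_-$ in the crepant case, so that $[\mathbb C^{m+1}\sslash_{\omega_-}K]=T_-$ on the nose.

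With these identifications in place, the final step is to apply the crepant transformation conjecture for toric stacks proved in \cite{CIJ} to the pair $T_+,\,T_-$. That theorem provides an analytic continuation of the $I$-function of $T_+$ to that of $T_-$ along a path in the secondary variable $y_{\mathrm r}$ (and a specified symplectic identification on the $H$-side), and under our identifications this analytic continuation pulls back to an analytic continuation of the narrow relative $I$-functions of $(X_+,D_+)$ and $(X_-,D_-)$. The narrow quantum $D$-module statement then follows from the fact that the $I$-function determines the $J$-function via a (generalized) mirror map \cite{Iritani07}, and the fundamental solution $L(t,z)$ governing the $D$-module is reconstructed from the $J$-function.

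The main obstacle I anticipate is not the $I$-function identification itself but verifying that the non-compactness of $T_\pm$ does not spoil the applicability of \cite{CIJ}: one must work with the appropriate equivariant or narrow version of the crepant transformation conjecture and check that the analytic continuation preserves the narrow/ambient sector under which the local--relative correspondence operates. A secondary bookkeeping issue is matching conventions: the extended data $S_0$ chosen for the pairs (cf.\ Remark~\ref{rmk-extended-data}) differs slightly from the $S_\pm$ used for $T_\pm$, so some variables $x_{ij}$ must be specialized accordingly before the identification with the toric $I$-functions of $T_\pm$ is exact.
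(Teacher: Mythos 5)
Your proposal is correct and follows essentially the same route as the paper: restrict to $\mathfrak H_0$ and use the local--relative correspondence to replace the narrow relative theory of $(X_\pm,D_\pm)$ by the local theory of $T_\pm=\mathcal O_{X_\pm}(-D_\pm)$, realize $T_\pm$ as the GIT quotients $[\mathbb C^{m+1}\sslash_{\omega_\pm}K]$ related by a crepant wall-crossing (with $[\mathbb C^{m+1}\sslash_{\omega_-}K]=T_-$ exactly in the crepant case), and then quote the crepant transformation theorem of \cite{CIJ}. The points you flag to check (exhaustion of the cones of $\hat\Sigma_-$ in the crepant case, and the narrow/non-compact version of \cite{CIJ}) are precisely the content of the paper's Propositions \ref{prop-T-+}--\ref{prop-cone-T-} and its appeal to the narrow quantum D-module of \cite{Shoemaker18}.
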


In the discrepant case, we have some extra cones.
\begin{proposition}\label{prop-compactification}
For Type II discrepant toric wall-crossings $\phi: \xymatrix{
X_+\ar@{-->}[r]& X_-}$, we have the following. For every cone $\sigma_I$ of $\Sigma_{-}$ such that $I$ contains $M_+$, the cone generated by $I\cup M_-$ is in $\hat{\Sigma}_{-}$. And, $[\mathbb C^{m+1}\sslash_{\omega_-}K]$ is a partial compactification of $T_-$, denoted by $\overline{T}_-$.
\end{proposition}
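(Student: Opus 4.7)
The plan is to apply the circuit-modification description of Remark~\ref{rmk-circuit} to the hat-wall-crossing $\hat\phi: T_+ \dashrightarrow [\mathbb C^{m+1}\sslash_{\omega_-}K]$, which by Proposition~\ref{prop-wall-crossing-local} is a toric flop. The first step is to identify the hat-circuit: in the discrepant case $\sum_{i=1}^m D_i\cdot e > 0$ the extra character $D_{m+1}=-D$ satisfies $D_{m+1}\cdot e = -c < 0$, so the circuit splits as $\hat M_+ = M_+$ and $\hat M_- = M_-\cup\{m+1\}$. Consequently $\hat\Sigma_-$ is obtained from the fan of $T_+$ (which equals $\hat\Sigma_+$ by Proposition~\ref{prop-T-+}) by removing each cone $\hat\sigma_{I'}$ with $I'\supset \hat M_-$ and $I'\not\supset \hat M_+$, and inserting new cones $\hat\sigma_{K'}$ where $K'=(I'\cup\hat M_+)\setminus J'$ for non-empty $J'\subset\hat M_-$.

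For the first assertion, I would take $\sigma_{I_0}\in\Sigma_-$ with $I_0\supset M_+$ and construct an auxiliary removed cone as follows: set $I:=(I_0\cap M_0)\cup M_-$ and $I':=I\cup\{m+1\}$. The key combinatorial step is verifying $\sigma_I\in\Sigma_+$. Since $I_0\supset M_+$ gives $\bar I_0\cap M_+=\emptyset$, the case $\bar I_0\in\mathcal A_0^{\on{thick}}$ is excluded, and the decomposition $\mathcal A_-=\mathcal A_0^{\on{thick}}\cup\{K\cup J:K\in\mathcal A_0^{\on{thin}},\,J\subset M_-,J\neq\emptyset\}$ forces $M_0\setminus I_0\in\mathcal A_0^{\on{thin}}$. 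The analogous decomposition of $\mathcal A_+$ then gives $\bar I=(M_0\setminus I_0)\cup M_+\in\mathcal A_+$, so $\sigma_I\in\Sigma_+$. Because $I\supset M_-$ and $I\cap M_+=\emptyset$, the cone $\hat\sigma_{I'}$ is one of the removed cones, and applying the circuit modification with $J'=\{m+1\}\subset\hat M_-$ produces $K'=(I'\cup M_+)\setminus\{m+1\}=I\cup M_+$, which equals $I_0\cup M_-$ using $I_0\supset M_+$. Hence $\hat\sigma_{I_0\cup M_-}\in\hat\Sigma_-$, as required.

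For the second assertion, Proposition~\ref{prop-cone-T-} realizes the fan of $T_-$ as the subfan $\{\hat\sigma_{I\cup\{m+1\}}:\sigma_I\in\Sigma_-\}$ of $\hat\Sigma_-$, yielding an open immersion $T_-\hookrightarrow[\mathbb C^{m+1}\sslash_{\omega_-}K]$. The cones $\hat\sigma_{I_0\cup M_-}$ produced above lie outside this subfan, since they do not contain the ray $\hat b_{m+1}$, making the embedding strict and identifying $[\mathbb C^{m+1}\sslash_{\omega_-}K]$ as a proper partial compactification $\overline{T}_-$ of $T_-$. The main obstacle I anticipate is the bookkeeping in the second paragraph: one must carefully verify that the canonical isomorphism $\hat{\mathbb L}\cong\mathbb L$ identifies the two walls, that the discrepancy hypothesis $\sum D_i\cdot e>0$ is precisely what places $m+1$ in $\hat M_-$ rather than $\hat M_+$ or $\hat M_0$ (and hence makes $\hat\phi$ a flop to which Remark~\ref{rmk-circuit} applies), and that the auxiliary cone $\sigma_I$ genuinely lies in $\Sigma_+\setminus\Sigma_-$ so that the circuit-modification recipe can output $K'=I_0\cup M_-$.
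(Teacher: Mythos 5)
Your argument is essentially the paper's own proof: both identify the hat-circuit as $\hat M_+=M_+$, $\hat M_-=M_-\cup\{m+1\}$ (using $\sum_i D_i\cdot e>0$ to place $m+1$ in $\hat M_-$) and read off the extra cones $\sigma_{I_0\cup M_-}$ from the circuit modification of Remark \ref{rmk-circuit}, with your version merely making the anticone bookkeeping (that $(M_0\setminus I_0)\cup M_+\in\mathcal A_+$, so the auxiliary removed cone really lies in $\hat\Sigma_+$) explicit where the paper is terse. The one quibble is in your last paragraph: ``the extra cones do not contain the ray $\hat b_{m+1}$'' does not by itself exclude them from the subfan giving $T_-$, since faces of the cones $\hat\sigma_{I\cup\{m+1\}}$ need not contain that ray; the correct reason, which the paper states first and which your own computation already yields (the decomposition of $\mathcal A_-$ forces $\bar I_0\cap M_-\neq\emptyset$), is that no cone of $\Sigma_-$ contains $M_-$, so a cone on $I_0\cup M_-$ cannot be a face of any $\hat\sigma_{I\cup\{m+1\}}$ with $\sigma_I\in\Sigma_-$.
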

\begin{proof}
The fan $\Sigma_-$ does not contain cones of the form $\sigma_J$, where $J$ contains $M_-$. Otherwise, there exists constants $c_j>0$ such that $\omega_-=\sum_{j\in J^c\subset M_0\cup M_+}c_jD_j$, then $\omega_-\cdot e\geq 0$ which contradicts the choice of $\omega_-$. 

According Remark \ref{rmk-circuit}, the toric wall-crossing can be described as a modification of the circuit $\hat{M}_+\cup \hat{M}_-$. Note that, for discrepant wall-crossing, we have $\hat{M}_-=M_-\cup\{m+1\}$ and $\hat{M}_+=M_+$. The difference between the modification of the circuits $\hat{M}_+\cup\hat{M}_-$ and $M_+\cup M_-$ is the following: for the cone $\sigma_I$ of $\Sigma_{-}$ such that $I$ contains $M_+$, the cone $I\cup M_-$ is also a cone of $\hat{\Sigma}_{-}$. These are the extra cones of $\hat{\Sigma}_{-}$ that serve as a partial compactification of $T_-$.

\end{proof}

By Proposition \ref{prop-compactification}, $\overline{T}_-$ is a partial compactification of $T_-$, then we need to relate the $I$-functions of $\overline{T}_-$ and $T_-$. Similar to \cite{MS}*{Lemma 5.9}, the following relation between $\overline{T}_-$ and $T_-$ holds.
\begin{lemma}
\begin{enumerate}
    \item The map 
    \begin{align*}
    \mathbb L\otimes \mathbb R&\rightarrow (\mathbb L\otimes \mathbb R)\oplus \mathbb R\\
    l&\mapsto (l,0)
    \end{align*}
    defines a canonical isomorphism $H_2(\overline{T}_-;\mathbb R)\cong H_2(T_-;\mathbb R)\oplus \mathbb R$.
    \item The map 
    \begin{align*}
    \mathbb L^\vee\otimes \mathbb R&\rightarrow (\mathbb L^\vee\otimes \mathbb R)\oplus \mathbb R\\
    l&\mapsto (l,0)
    \end{align*}
    defines a canonical isomorphism $H^2(\overline{T}_-;\mathbb R)\cong H^2(T_-;\mathbb R)\oplus \mathbb R$.
    \item The Mori cones are related as
    \[
    \on{NE}(\overline{T}_-)=\on{NE}(T_-)\times \mathbb R_{\geq 0} (0,-1).
    \]
\end{enumerate}
\end{lemma}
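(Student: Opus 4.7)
My plan is to read the three assertions directly off the explicit toric fans of $T_-$ and $\overline T_-$ built in this section. In the discrepant resolution case at hand we have $M_-=\{j\}$ with $j\in S_-$, so the vector $\hat b_j=(b_j,1)$ is an extended (non-ray) generator of the stacky fan of $T_-$. The extra maximal cones of $\hat\Sigma_-$ described in Proposition \ref{prop-compactification} all contain $\hat b_j$, and together they promote $\hat b_j$ to an honest ray of $\overline T_-$. Combinatorially, the only difference between the stacky fans of $T_-$ and $\overline T_-$ is that $j$ leaves the extended data.

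For parts (1) and (2), I will apply the divisor sequence for a simplicial toric Deligne--Mumford stack:
\[
0\longrightarrow (N\oplus\mathbb{Z})^\vee \longrightarrow \bigoplus_{i\in\mathcal{R}_Y} \mathbb{Z}\cdot\hat D_i \longrightarrow H^2(Y;\mathbb{Z})\longrightarrow 0,
\]
valid for $Y\in\{T_-,\overline T_-\}$, where $\mathcal{R}_Y$ indexes rays and extended generators. The index sets $\mathcal{R}_{T_-}$ and $\mathcal{R}_{\overline T_-}$ differ by the single element $j$, and the character map is identical on shared indices, so the cokernels differ by exactly one generator $\hat D_j$. This gives $H^2(\overline T_-;\mathbb{R})\cong H^2(T_-;\mathbb{R})\oplus\mathbb{R}\langle\hat D_j\rangle$, proving (2). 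Dualizing, and using the canonical identification $\mathbb{L}^T\cong\mathbb{L}$ via $d\mapsto(d,-D\cdot d)$ together with the deformation retract $T_-\simeq X_-$, yields $H_2(\overline T_-;\mathbb{R})\cong H_2(T_-;\mathbb{R})\oplus\mathbb{R}$ with splitting $l\mapsto(l,0)$ realized by the pushforward along the open embedding $T_-\hookrightarrow\overline T_-$, proving (1).

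For part (3), the new Mori generator will be the torus-invariant $\mathbb{P}^1$ associated to the wall of $\hat\Sigma_-$ created by activating $\hat b_j$. Writing the wall relation as an integral combination $\sum c_i\hat b_i=0$: since $j\in S_-$ means $\bar b_j$ lies in the span of $\{\bar b_i:i\in M_0\cup M_+\}$ within some cone of $\Sigma_-$ containing $M_+$, and since $\hat b_{m+1}=(0,1)$ is the only other generator at height one outside $M_+\cup M_-$, the second coordinate of the relation forces the coefficient of $\hat b_{m+1}$ to be $-1$ after normalizing the coefficient of $\hat b_j$ to $1$. Reading these coefficients as intersection numbers with the corresponding divisors shows that this curve pairs trivially with every divisor pulled back from $X_-$ and pairs to $-1$ with $\hat D_{m+1}$, hence has class $(0,-1)$ in the decomposition of (1). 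Combined with the unchanged Mori generators of $T_-$ inherited from walls already present in the fan of $T_-$, which lie in the $\on{NE}(T_-)\times\{0\}$ summand, this establishes the product description of $\on{NE}(\overline T_-)$.

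The main technical point is the explicit determination of the wall relation and especially pinning down the coefficient of $\hat b_{m+1}$. This is forced by the height conventions of the lifts $\hat b_i$, but carrying out the case-by-case analysis of which cone of $\Sigma_-$ contributes will require some bookkeeping.
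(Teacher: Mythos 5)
Your treatment of parts (1) and (2) is essentially correct and is, in substance, the computation the paper itself delegates to \cite{MS}*{Lemma 5.9} without proof: the generator sets of the two fans coincide except that $j$ is an extended vector for $T_-$ but an honest ray for $\overline T_-$, so the presentations $H^2\cong \mathbb L^\vee\otimes\mathbb R\big/\sum_{i\in S}\mathbb R D_i$ differ by the single class of $D_j$. Two slips should be fixed: your index set $\mathcal R_Y$ must consist of rays only (if it really contained the extended generators as you state, then $\mathcal R_{T_-}=\mathcal R_{\overline T_-}=\{1,\dots,m+1\}$ and the argument collapses), and an open embedding does not induce a pushforward on $H_2$; instead use the zero section $X_-\hookrightarrow \overline T_-$, which is closed because no cone of $\hat\Sigma_-$ contains both the rays $j$ and $m+1$, or simply dualize part (2).

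Part (3) contains the genuine problems. The wall relation is not what you claim: writing $w=-D_j\cdot e>0$ and $w_i=D_i\cdot e$ for $i\in M_+$, one has $w\,\hat b_j-\sum_{i\in M_+}w_i\hat b_i+(D\cdot e)\,\hat b_{m+1}=0$, so after normalizing the coefficient of $\hat b_j$ to $1$ the coefficient of $\hat b_{m+1}$ is $+(D\cdot e)/w$, which is positive and equals $1$ only in special unweighted cases ($\sum_{i\in M_+}w_i=2w$), never $-1$. Accordingly the new toric curve pairs \emph{positively} with both $\hat D_j$ and $\hat D_{m+1}$ and pairs $-w_i\neq 0$ with $\hat D_i$ for $i\in M_+$; its class is a positive multiple of $-e$. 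What actually justifies the label $(0,-1)$ is that $e$ generates $W^\perp$, so $-e$ pairs to zero with the wall classes $p^-_1,\dots,p^-_{\mathrm{r}-1}$ used to define the coordinates, not with ``every divisor pulled back from $X_-$''. More seriously, even with the correct class of this one curve you have not proved the product description: the extra maximal cones of Proposition \ref{prop-compactification} also create walls lying inside the compactifying divisor $\hat D_j$ (of the form $\hat\sigma_{(I\cap I')\cup\{j\}}$ whenever two maximal cones of $\Sigma_-$ contain $M_+$), and their curve classes mix $\on{NE}(X_-)$ with $e$; for the blow-up of $\mathbb P^3$ along a line the relation $\hat b_3+\hat b_4-\hat b_1-\hat b_2+2\hat b_j=0$ gives the class $\ell-2e$, and one must show all such classes lie in $\on{NE}(T_-)+\mathbb R_{\geq 0}(-e)$, which your argument never addresses. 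A cleaner route, consistent with the GIT framework of Section \ref{sec:wall-crossing}, is to bypass extremal curves and identify $\on{NE}(\overline T_-)$ as the dual of the chamber $\hat C_-$ containing $\hat\omega_-$ for the enlarged character set $(D_1,\dots,D_m,-D)$, and compare it with the dual of $C'_-$; as it stands, your part (3) is both incorrect in its intersection-number claims and incomplete as a determination of the Mori cone.
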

Therefore, we can write
\[
\hat{d}=d_-+d^\prime\in  \on{NE}(\overline{T}_-)=\on{NE}(T_-)\times \mathbb R_{\geq 0} (0,-1),
\]
and
\[
\hat{y}^{\hat{d}}=y^{d_-}\cdot (y^\prime)^{d^\prime}.
\]

A mild generalization of \cite{MS}*{Theorem 1.3} gives the following.
\begin{theorem}\label{thm-specialization}
The monodromy invariant part of the narrow quantum D-module of $\overline{T}_-$ around $y^\prime=0$, when restricted to $y^\prime=0$, contains a submodule which maps surjectively to the narrow quantum D-module of $T_-$ via $i^*$, where $i: T_-\hookrightarrow \overline{T}_-$ is the inclusion map.
\end{theorem}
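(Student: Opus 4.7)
The plan is to adapt the strategy of \cite{MS}*{Theorem 1.3}, now in the discrepant-resolution setting, by combining the explicit form of the narrow $I$-functions of $\overline{T}_-$ and $T_-$ with the structural results of Proposition \ref{prop-compactification}. The key input is the splitting $H_2(\overline{T}_-;\mathbb R)\cong H_2(T_-;\mathbb R)\oplus \mathbb R$, which lets us separate the ``old'' curve classes $d_-\in \on{NE}(T_-)$ from the ``new'' direction $d'(0,-1)\in \mathbb R_{\geq 0}(0,-1)$ coming from the partial compactification, so that the Novikov monomial factors as $\hat y^{\hat d}=y^{d_-}(y')^{d'}$.

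First I would write the narrow $I$-function $I_{\overline{T}_-}$ using the GIT data of Proposition \ref{prop-compactification} and the toric mirror theorem of \cite{CCIT15,CCFK}. Organizing the sum over $\hat d\in \mathbb K_{\overline{T}_-}$ as a double sum over $(d_-,d')$, one sees that the hypergeometric factor splits as a product of the factor indexed by $d_-$ (which is exactly the summand of $I_{T_-}$) and an extra factor indexed by $d'$ arising from the new ray $\hat b_{m+1}$ associated with $-D$. Monodromy around $y'=0$ acts on $(y')^{d'}$ by $e^{2\pi i d'}$, so the monodromy-invariant part consists of terms with $d'\in \mathbb Z_{\geq 0}$; restricting to $y'=0$ then keeps only $d'=0$. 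For $d'=0$ the extra hypergeometric factor collapses to $1$, and applying $i^{*}$ reproduces the $d_-$-summand of $I_{T_-}$.

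Next I would lift this $I$-function identification to the level of quantum $D$-modules. By the mirror theorem, $I_{\overline{T}_-}$ lies on the Givental cone of $\overline{T}_-$ and, after Birkhoff factorization, encodes the fundamental solution $L_{\overline{T}_-}(t,z)$; the narrow quantum $D$-module is recovered as $zL_{\overline{T}_-}(t,-z)^{-1}$ acting on the narrow state space, using the reconstruction $\mathcal L = \bigcup_t zL(t,-z)^{-1}\mathcal H_+$ recalled earlier. The monodromy-invariant submodule consists of flat sections whose monodromy around $y'=0$ is trivial; restriction to $y'=0$ is well-defined on this submodule because the Dubrovin connection in $y'\partial_{y'}$ has at worst a logarithmic pole along $\{y'=0\}$, as can be verified from the hypergeometric form of $I_{\overline{T}_-}$. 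The explicit $I$-function identification above, together with the mirror map, then furnishes a morphism $i^{*}$ from this restricted monodromy-invariant submodule onto the fundamental solution of the narrow quantum $D$-module of $T_-$, which yields the required surjective submodule after passing to the $D$-module generated by it.

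The main obstacle is to verify that the candidate submodule is genuinely a submodule of the narrow quantum $D$-module of $\overline{T}_-$ restricted to $\{y'=0\}$, i.e.\ that it is closed under all Dubrovin operators $\nabla_{\vec s,k}$ and $\nabla_{z\partial z}$ and not just compatible with $i^{*}$ at the level of $I$-functions. This reduces to showing that the quantum product by any narrow class preserves, after monodromy projection and restriction $y'=0$, the subspace spanned by the non-extended narrow classes of $T_-$ inside $H^{*}_{\mathrm{CR}}(\overline{T}_-)$. This in turn follows from the structure of the cones $\hat\sigma_I$ added in the partial compactification (Proposition \ref{prop-cone-T-}): the extra twisted sectors of $\overline{T}_-$ supported on $\{y'=0\}\cup\{\text{compactifying divisor}\}$ contribute only through curve classes with $d'>0$, which are killed by the monodromy-invariance plus restriction procedure. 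Once this compatibility is established, the surjectivity onto the narrow quantum $D$-module of $T_-$ is immediate from the $I$-function match, completing the proof.
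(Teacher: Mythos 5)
For context: the paper itself contains no independent argument for Theorem \ref{thm-specialization} — it is stated as ``a mild generalization of \cite{MS}*{Theorem 1.3}'' — so your proposal is essentially an attempt to reconstruct the proof behind that citation, and your first two paragraphs do follow the same strategy as \cite{MS}: split the Novikov variables according to $H_2(\overline{T}_-;\mathbb R)\cong H_2(T_-;\mathbb R)\oplus\mathbb R$, take the monodromy-invariant part around $y^\prime=0$, restrict, and compare with $I_{T_-}$ via $i^*$. That part is sound in outline. One point you should make explicit: the reason the hypergeometric factor attached to the extra ray becomes trivial on $d^\prime=0$ classes is the projection formula, $E^\prime\cdot i_*d_-=i^*E^\prime\cdot d_-=0$ where $E^\prime$ is the compactifying divisor (whose class spans the kernel of $i^*$ on $H^2$), together with the fact that terms landing in the new twisted sectors, or divisible by the class of $E^\prime$, are killed by $i^*$; this bookkeeping is exactly where the kernel of $i^*$ enters and why the conclusion is a surjection from a submodule rather than an isomorphism.

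The genuine gap is in your last paragraph, which you yourself flag as the main obstacle and then resolve only by assertion. The claim that the extra twisted sectors of $\overline{T}_-$ ``contribute only through curve classes with $d^\prime>0$'' is not justified and is not automatic: a class $\hat d$ with $E^\prime\cdot\hat d=0$ (i.e.\ $d^\prime=0$) can still be represented by stable maps contained in the compactifying divisor, so three-point invariants with one insertion from a new sector need not vanish for $d^\prime=0$; hence closure of the span of the old narrow classes under the quantum product does not follow from the stated mechanism. (Also, twisted sectors are supported on loci in $\overline{T}_-$, not on $\{y^\prime=0\}$, which lives in the parameter space — the phrasing conflates the two.) The argument underlying \cite{MS} does not need such a vanishing statement: one uses that the narrow quantum D-module is generated by the fundamental solution applied to narrow classes (equivalently, by the $I$-function components after Birkhoff factorization and the mirror map), defines the candidate submodule as the D-module generated by the monodromy-invariant flat sections matched through the $I$-function identification, so that closure under $\nabla_{\vec s,k}$ and $\nabla_{z\partial z}$ holds by construction, and then obtains surjectivity onto the narrow quantum D-module of $T_-$ because the image of these generators under $i^*$ recovers the fundamental solution of $T_-$; the kernel of $i^*$ accounts for the failure of injectivity. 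Until your step three is replaced by an argument of this kind (or the vanishing claim is actually proved), the proposal is not a complete proof, although the overall route is the right one.
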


In terms of $I$-functions, the $I$-function (or the $H$-function) of $T_+$ can be analytically continued to the $I$-function (or the $H$-function) of $\overline{T}_-$ which is identified with the $I$-function (or the $H$-function) of $T_-$ after specializing to $y^\prime=0$.

By the local-relative/orbifold correspondence of \cite{vGGR}, \cite{TY20b} and \cite{BNTY}, we have
\begin{theorem}\label{thm-narrow-qd}
After analytic continuation, the monodromy invariant part of the narrow quantum D-module of $(X_+,D_+)$ around $y^\prime=0$, when restricted to $y^\prime=0$, contains  the narrow quantum D-module of $(X_-,D_-)$ as a subquotient.
\end{theorem}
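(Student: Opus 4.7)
The plan is to reduce the theorem to its local counterpart by means of the local-relative correspondence, and then to invoke the crepant transformation conjecture together with Theorem~\ref{thm-specialization}. I will organize the argument into three pieces corresponding to the three pieces of machinery already developed in the section.

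First, I would apply the local-relative/orbifold correspondence of \cite{vGGR}, \cite{TY20b}, \cite{BNTY}. Since we are restricted to the narrow part of the relative quantum D-module of $(X_\pm,D_\pm)$---i.e.\ to insertions supported on the ambient cohomology, with maximal contact orders---the correspondence identifies this narrow relative quantum D-module with the narrow quantum D-module of the total space $T_\pm := \mathcal{O}_{X_\pm}(-D_\pm)$. In particular, $I$-functions on each side match up to the standard hypergeometric factor. This reduces the statement of Theorem~\ref{thm-narrow-qd} to a comparison of the narrow quantum D-modules of $T_+$ and $T_-$.

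Second, I would use the GIT presentations established earlier in the section. By Proposition~\ref{prop-T-+}, $T_+ = [\mathbb{C}^{m+1}\sslash_{\omega_+} K]$, while Proposition~\ref{prop-compactification} realizes $[\mathbb{C}^{m+1}\sslash_{\omega_-} K]$ as a partial compactification $\overline{T}_-$ of $T_-$. Proposition~\ref{prop-wall-crossing-local} shows that the induced birational transformation $\hat\phi\colon T_+\dashrightarrow \overline{T}_-$ is a \emph{flop}, because including the extra coordinate with weight $-D$ compensates for the discrepancy of $\phi$. The crepant transformation conjecture for toric wall-crossings, proved in \cite{CIJ}, then provides an analytic continuation between the (narrow) quantum D-modules of $T_+$ and $\overline{T}_-$; moreover, since $m+1 \in \hat M_-$ (when $\sum_i D_i\cdot e > 0$), the new K\"ahler variable $y'$ conjugate to the extra ray is precisely the wall-crossing parameter.

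Third, I would invoke Theorem~\ref{thm-specialization} to descend from $\overline{T}_-$ to $T_-$: the monodromy invariant part of the narrow quantum D-module of $\overline{T}_-$ around $y'=0$, when restricted to $y'=0$, contains a submodule surjecting onto the narrow quantum D-module of $T_-$ via the pullback along $i\colon T_-\hookrightarrow \overline{T}_-$. Composing the analytic continuation of step two with the specialization and surjection of step three, and translating back via the local-relative correspondence of step one, yields the claimed subquotient relation. The main obstacle I anticipate is bookkeeping at the level of the mirror map and polarizations: I need the analytic continuation of \cite{CIJ} (a symplectic transformation on Givental's cone for $T_+$ to that of $\overline{T}_-$) to be compatible with the monodromy invariance and $y'=0$ specialization, so that taking monodromy invariants commutes with pulling back along $i^*$. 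Once these compatibilities are checked---using that the non-trivial part of the CIJ symplectic transformation lies in the $\Gamma$-classes for divisors in $M_0$, which are untouched by the specialization $y'=0$---the chain of maps assembles into the subquotient statement.
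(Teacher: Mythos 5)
Your proposal follows the paper's own argument: the paper likewise deduces Theorem \ref{thm-narrow-qd} from the local-relative/orbifold correspondence of \cite{vGGR}, \cite{TY20b}, \cite{BNTY}, combined with the GIT presentation $T_+=[\mathbb C^{m+1}\sslash_{\omega_+}K]$, the flop $T_+\dashrightarrow\overline{T}_-$ handled by the crepant transformation results of \cite{CIJ}, and the specialization statement of Theorem \ref{thm-specialization} relating $\overline{T}_-$ to $T_-$. This matches the paper's route essentially step for step, so no further comparison is needed.
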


\begin{remark}
When the divisors $D_+$ and $D_-$ are simple normal crossings and each irreducible component is nef, then the genus zero formal Gromov--Witten invariants of infinite root stacks $X_{\pm, {D_\pm,\infty}}$ with maximal contact orders coincide with the local Gromov--Witten invariants of the total space of the corresponding vector bundles $T_\pm$. Then $T_+$ and $\overline{T}_-$ are still toric stacks related by crepant wall-crossings. Theorem \ref{thm-narrow-qd} holds in this case too.
\end{remark}

\begin{remark}
Given a discrepant toric wall-crossing as in Section \ref{sec:wall-crossing}, \cite{AS} studied how absolute Gromov--Witten invariants are related. It was shown in \cite{AS} that the Laplace transform of the regularized $I$-function of $X_-$ is related to the $I$-function of $X_+$ via a linear transformation. As far as we know, a direct geometric meaning of the regularized $I$-function has not been explored.  
In our set-up, we consider relative $I$-functions instead of absolute $I$-functions and then study the analytic continuation. The relative $I$-function here is different from the regularized $I$-function considered in \cite{AS}.  
\end{remark}

\subsection{Via the hypersurface construction}\label{sec:hypersurface}
Let $X$ be a smooth projective variety and $D$ be a smooth nef divisor. Recall that, following \cite{FTY}, the root stack $X_{D,r}$ can be constructed as a hypersurface of a $\mathbb P^1[r]$-bundle $Y_{X_\infty,r}$ over $X$. In this section, we use this hypersurface construction to study wall-crossing behaviors of relative quantum cohomology of $(X,D)$. The advantage of this construction is that we can consider invariants that are not of maximal contact order, by considering the extended $I$-functions for the toric stack $Y_{\pm,X_\infty,r}$.

Recall that, we have the following hypersurface construction.
Consider the line bundle 
\[
L:=\cO_X(-D),
\] 
and the projectivization 
\[Y:=\mathbb P(L\oplus\cO_X) \xrightarrow{\pi} X.
\]
Let 
\[
X_\infty= \mathbb P(L)\subset \mathbb P(L\oplus\cO_X).
\]
Then $Y$ is the compactification of the total space of $L$ and $X_\infty$ is the divisor that we add in at infinity to compactify $L$. We have the following diagram from \cite{FTY}*{Section 3.1}.

\begin{center}
\begin{tikzcd}
& p^*(\cO_Y(1)) \arrow[r] \arrow[d]&\cO_Y(1) \arrow[d] \\
p^*\tilde{f}^{-1}(0)=X_{D,r} \arrow[r, hook, "i"]
& Y_{X_\infty,r} \arrow[d] \arrow[r, "p"] \arrow[u, bend left, "p^*\tilde{f}"] & Y\arrow[d,"\pi"] \arrow[u, bend left, "\tilde{f}"]\\
& X & X
\end{tikzcd}.
\end{center}

Given a toric birational map $\phi: \xymatrix{
X_+\ar@{-->}[r]& X_-}$ as in Section \ref{sec:wall-crossing}, we assume $D_+$ and $D_-$ are smooth, nef divisors that are linear equivalent to $\sum_{i\in (M_+\cup M_-)\setminus S_+} D_i$ and $\sum_{i\in (M_+\cup M_-)\setminus S_-} D_i$ respectively. Both $Y_+:=\mathbb P(\cO(-D_+)\oplus\cO_{X_+})$ and $Y_-:=\mathbb P(\cO(-D_-)\oplus\cO_{X_-})$ can be realized as GIT quotients. We define
\[
\tilde{\beta}:\mathbb Z^{m+2}\rightarrow N\oplus \mathbb Z
\]
by
\[   
\tilde{\beta}=\left\{
\begin{array}{ll}
      (\beta(e_i),1) & i\in\{1,\ldots,m\} \text{ and }i\in M_+\cup M_- \\
      (\beta(e_i),0) & i\in\{1,\ldots,m\} \text{ and } i\in M_0 \\
      (0, 1) & \text{If $i=m+1$}\\
      (0, -1) & \text{If $i=m+2$}. 
\end{array} 
\right. \]
We consider the fan sequence:
\begin{equation}\label{fan-seq-proj}
0 \longrightarrow \tilde{\mathbb{L}}:=\text{ker}(\tilde{\beta}) \longrightarrow \mathbb{Z}^{m+2} \stackrel{\tilde{\beta}}{\longrightarrow} N\oplus \mathbb Z\longrightarrow 0,
\end{equation}
where the map $\tilde{\mathbb L}=\mathbb L\oplus \mathbb Z\rightarrow \mathbb Z^{m+2}$ is given by $(\tilde{D}_1,\ldots,\tilde{D}_m,-\tilde{D}_{m+1},\tilde{D}_{m+2})$ with
\[
\tilde{D}_i=(D_i,0), \text{ for } i\in\{1,\ldots,m\}; 
\]
\[
\tilde{D}_{m+1}=(-D,1), \, \tilde{D}_{m+2}=(0,1), \text{ and } D=\sum_{i\in M_+\cup M_-} D_i.
\] 
Recall that $\omega_+$ and $\omega_-$ are stability conditions in chambers $C_+$ and $C_-$ that are separated by a hyperplane wall $W$ in $\mathbb L^\vee\otimes \mathbb R$.  We consider the stability conditions
\[
\tilde{\omega}_+=(\omega_+,\epsilon), \quad \text{and } \tilde{\omega}_-=(\omega_-,\epsilon),
\]
where $\epsilon$ is a very small positive real number. We have
\[
Y_+=X_{\tilde{\omega}_+}, \text{ and } Y_-=X_{\tilde{\omega}_-}.
\]
We also have
\[
[X_\infty]= \bar{\tilde D}_{m+2}.
\]

We assume $\sum_{i=1}^m D_i\cdot e\geq 0$. There are two cases
\begin{itemize}
    \item The crepant case: $\sum_{i=1}^m D_i\cdot e=0$.  Two stability conditions $\tilde{\omega}_+$ and $\tilde{\omega}_-$ are related by a single wall-crossing and the wall $\tilde W$ is span by two subspaces $(W,0)$ and $\mathbb R(0,1)$.
    \item The discrepant case: $\sum_{i=1}^m D_i\cdot e >0$. Two stability conditions $\tilde{\omega}_+$ and $\tilde{\omega}_-$ are related by crossing two walls in $\tilde{\mathbb L}^\vee\otimes \mathbb R$. The two walls are the hyperplanes
\[
\tilde{W}_1:=\on{span}\{(W,0),(0,1)\}, \quad \text{and } \tilde{W}_2:= \on{span}\{(W,0),(-D,1)\}.
\]
\end{itemize}
Note that $D=\sum_{i\in M_+\cup M_-} D_i$. If $\sum_{i=1}^m D_i\cdot e=0$, then $D$ is on the wall $W$ and $\tilde W_1=\tilde W_2=\tilde W$. Crossing the wall $W(=\tilde W_1)$ is crepant since $\sum_{i=1}^{m+2}\tilde D_i$ is on the wall $\tilde W_1$. But crossing the wall $\tilde W_2$ (if it does not coincide with $\tilde W_1$) is discrepant.

Similar to Section \ref{sec:local-orbifold}, we have
\begin{proposition}\label{prop-proj-bundle-I}
When $\phi: \xymatrix{
X_+\ar@{-->}[r]& X_-}$ is induced from toric crepant wall-crossing, then the induced birational transformation between $Y_+$ (resp. $Y_{+,X_\infty,r}$) and $Y_-$ (resp. $Y_{-,X_\infty,r}$) is again a crepant transformation of the same type.
\end{proposition}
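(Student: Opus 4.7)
The plan is to verify the claim directly from the GIT data in the extended fan sequence (\ref{fan-seq-proj}). In the crepant case, by hypothesis $\sum_{i=1}^m D_i\cdot e=0$, and since $D_i\cdot e=0$ for $i\in M_0$ we obtain $D\cdot e=\sum_{i\in M_+\cup M_-}D_i\cdot e=0$ as well. The two stability conditions $\tilde{\omega}_\pm=(\omega_\pm,\epsilon)$ with $\epsilon>0$ sufficiently small are therefore separated by a single wall, namely $\tilde W=\on{span}\{(W,0),(0,1)\}\subset \tilde{\mathbb{L}}^\vee\otimes\mathbb R$, whose normal direction we take to be $\tilde e=(e,0)$ (normalized so that $\tilde{\omega}_+\cdot \tilde e>0$).

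I would then check the two conditions for the wall-crossing $\tilde{\omega}_+\rightsquigarrow\tilde{\omega}_-$ to be crepant of the same type. For crepancy, pair the characters with $\tilde e$: one has $\tilde D_i\cdot \tilde e=D_i\cdot e$ for $i\in\{1,\ldots,m\}$, while $\tilde D_{m+1}\cdot \tilde e=-D\cdot e=0$ and $\tilde D_{m+2}\cdot \tilde e=0$. Consequently
\[
\sum_{i=1}^{m+2}\tilde D_i \cdot \tilde e=\sum_{i=1}^m D_i\cdot e=0,
\]
so the sum of all characters lies on $\tilde W$, which is the toric criterion for the wall-crossing to be crepant. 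For the type, the same computation shows that $\tilde M_\pm=M_\pm$ while $\tilde M_0=M_0\cup\{m+1,m+2\}$; since the cardinalities of $\tilde M_\pm$ agree with those of $M_\pm$, the classification of Section \ref{sec:wall-crossing} (flip/flop versus crepant resolution versus gerbe/root) produces the same type for $\phi$ and the induced map $Y_+\dashrightarrow Y_-$. A small additional check is that the partition $\{1,\ldots,m+2\}=\tilde S_\pm\sqcup(\text{rays of }\Sigma_{\tilde\omega_\pm})$ is inherited from $S_\pm$ in the obvious way, so the distinction among Types I, II, III is strictly preserved.

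For the root stack statement, I would realize $Y_{\pm,X_\infty,r}$ by the same GIT data as $Y_\pm$ with the single modification that the character $\tilde D_{m+2}$ corresponding to the infinity divisor $X_\infty$ is replaced by its $r$-th root (equivalently, by enlarging the lattice $\tilde{\mathbb L}$ along the $(0,1)$-direction of $N\oplus\mathbb Z$). Since this modification happens entirely in the $\tilde W$-direction $\tilde M_0$, it commutes with the wall-crossing data: the wall $\tilde W$, the normal $\tilde e$, the sets $\tilde M_\pm$, and the sum $\sum\tilde D_i\cdot\tilde e$ are all unchanged, so the same arguments apply verbatim. The only mild obstacle is bookkeeping the extended sets $\tilde S_\pm$ (cf.\ Remark \ref{rmk-extended-data}) to make sure the type classification uses the correct rays after root construction; I expect this to be routine.
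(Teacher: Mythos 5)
Your proposal is correct and follows essentially the same route as the paper, which deduces the statement exactly as in its Proposition \ref{prop-wall-crossing-local}: in the crepant case $D\cdot e=0$, so the two added characters pair to zero with the wall normal, giving $\tilde M_\pm=M_\pm$, $m+1,m+2\in\tilde M_0$, and $\sum_{i}\tilde D_i$ on the wall $\tilde W$, hence a crepant wall-crossing of the same type, with the root construction along $\tilde D_{m+2}$ only modifying data in the wall direction. (The only nit is the sign bookkeeping for the $(m+1)$-st character, which is $-\tilde D_{m+1}=(D,-1)$ rather than $\tilde D_{m+1}$, but its pairing with $\tilde e$ vanishes either way, so nothing is affected.)
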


Now we consider the case of discrepant resolution. 
\begin{proposition}\label{prop-proj-bundle-II}
If $\phi: \xymatrix{
X_+\ar@{-->}[r]& X_-}$ is a discrepant resolution induced from a Type II toric wall-crossing, then crossing the first wall $\tilde W_1$ induces a birational transformation between $Y_+$ (resp. $Y_{+,X_\infty,r}$) and a toric stack $\tilde Y_-$ (resp. $\tilde Y_{-,X_\infty,r}$). This birational transformation is a flop.

Crossing the wall $\tilde W_2$ gives a birational transformation between $\tilde Y_-$ (resp. $\tilde Y_{-,X_\infty,r}$)  and $Y_-$ (resp. $Y_{-,X_\infty,r}$). This birational transformation is a discrepant resolution.
\end{proposition}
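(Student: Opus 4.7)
The plan is to analyse each of the two wall-crossings separately via its primitive perpendicular cocharacter in $\tilde{\mathbb{L}} = \mathbb{L} \oplus \mathbb{Z}$, and then invoke the Type I/II classification and the crepancy criterion from Section~\ref{sec:wall-crossing}. First I locate the cocharacters. For $\tilde{W}_1 = \mathrm{span}\{(W,0),(0,1)\}$, the conditions $\tilde{e}_1 \cdot (W,0) = 0$ and $\tilde{e}_1 \cdot (0,1) = 0$ force $\tilde{e}_1 = (e, 0)$, with sign fixed by $\tilde{\omega}_+ \cdot \tilde{e}_1 = \omega_+ \cdot e > 0$. Analogously $\tilde{e}_2 = (e,\, D \cdot e)$ is the primitive vector perpendicular to $\tilde{W}_2 = \mathrm{span}\{(W,0),(-D,1)\}$.

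Next I pair each character $\tilde{D}_i$ against $\tilde{e}_1$: for $i \leq m$ one gets $\tilde{D}_i \cdot \tilde{e}_1 = D_i \cdot e$, while $\tilde{D}_{m+1} \cdot \tilde{e}_1 = -D \cdot e < 0$ and $\tilde{D}_{m+2} \cdot \tilde{e}_1 = 0$. Hence $\tilde{M}_-^{(1)} = M_- \cup \{m+1\}$, $\tilde{M}_+^{(1)} = M_+$, $\tilde{M}_0^{(1)} = M_0 \cup \{m+2\}$. Reading the hypothesis ``$\phi$ is a discrepant resolution'' as $\phi$ being the contracting map, so that $|M_-| = 1$ and $|M_+| \geq 2$, both sets $\tilde{M}_\pm^{(1)}$ have size $\geq 2$, placing the first wall-crossing in Type I. For crepancy I compute $\sum_i \tilde{D}_i = (\sum_{i=1}^m D_i - D,\,2)$; since $(\sum_{i=1}^m D_i)\cdot e = D\cdot e$ by the definition of $D = \sum_{i\in M_+\cup M_-}D_i$, the sum pairs to $0$ against $\tilde{e}_1$ and lies in $\tilde{W}_1$. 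This proves the first wall-crossing is a flop, and defines $\tilde{Y}_-$ as the GIT quotient for any stability condition in the chamber reached from $\tilde{\omega}_+$ by crossing $\tilde{W}_1$.

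The same calculation for $\tilde{W}_2$ gives $\tilde{M}_-^{(2)} = M_-$, $\tilde{M}_+^{(2)} = M_+ \cup \{m+2\}$, and $\tilde{M}_0^{(2)} = M_0 \cup \{m+1\}$. Since $|\tilde{M}_-^{(2)}| = |M_-| = 1$, the wall-crossing is Type II, and the identity $(\sum_i \tilde{D}_i) \cdot \tilde{e}_2 = 0 + 2(D \cdot e) = 2 D\cdot e \neq 0$ shows it is discrepant, hence a discrepant resolution. Parametrising the segment $\omega(t) = (1-t)\tilde{\omega}_+ + t\tilde{\omega}_-$ and using $D \cdot e > 0$ together with the smallness of $\epsilon$ shows $t_1 < t_2$, so $\tilde{W}_1$ is crossed before $\tilde{W}_2$ and the intermediate chamber coincides with the one defining $\tilde{Y}_-$ from the previous paragraph. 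For the root-stack versions, $Y_{X_\infty, r}$ differs from $Y$ only in the lattice (stacky) structure along the ray $\tilde{D}_{m+2}$; this preserves the rational hyperplanes $\tilde{W}_1, \tilde{W}_2$ and all the pairings $\tilde{D}_i \cdot \tilde{e}_j$, so the Type and crepancy verdicts carry over verbatim.

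The main technical hurdle is careful sign bookkeeping on $\tilde{D}_{m+1}$ (the paper's fan sequence writes it as $-\tilde{D}_{m+1}$, so the character on the $(m{+}1)$-st coordinate must be identified unambiguously before computing the pairings), together with fixing the convention on the direction of $\phi$: the opposite sub-case $|M_+|=1$ yields instead a \emph{crepant} resolution at $\tilde{W}_1$ followed by a flip at $\tilde{W}_2$, so the conclusion as stated relies on reading a ``discrepant resolution $X_+ \dashrightarrow X_-$'' as the case with $|M_-|=1$, $|M_+|\geq 2$.
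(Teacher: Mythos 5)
Your argument is correct and is essentially the paper's own proof: the paper likewise determines that $m+1\in\tilde M_-$, $m+2\in\tilde M_0$ for the first wall and $m+1\in\tilde M_0$, $m+2\in\tilde M_+$ for the second, counts $\#\tilde M_\pm$ against the Type I/II classification, and uses that $\sum_i\tilde D_i$ lies on $\tilde W_1$ but not on $\tilde W_2$ to get flop versus discrepant resolution. Your write-up only makes explicit what the paper leaves implicit --- the perpendicular cocharacters $(e,0)$ and $(e,D\cdot e)$, the sign convention on $\tilde D_{m+1}$, the order in which the two walls are crossed, the convention $\#M_-=1$, and the root-stack case --- and all of these checks are consistent with the paper.
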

\begin{proof}
The first wall-crossing is similar to the wall-crossing in Proposition \ref{prop-wall-crossing-local}. We have $m+1\in \tilde M_-$, then  $\#\{\tilde{M}_+\}\geq 2$ and $\#\{\tilde{M}_-\}\geq 2$. Since it is crepant, we conclude that it is a flop. 

For the second wall-crossing, $m+1\in \tilde M_0$ and $m+2\in \tilde M_+$. then  $\#\{\tilde{M}_+\}=\#\{M_+\}+1\geq 3$ and $\#\{\tilde{M}_-\}=\#\{M_-\}=1$. Therefore, it is a discrepant resolution.
\end{proof}

\begin{remark}\label{rmk-partial-compactification}
The geometric construction in this section also gives another explanation of the relation between $\overline{T}_-$ and $T_-$ in Section \ref{sec:local-orbifold}. Recall that $\overline{T}_-$ is a partial compactification of $T_-$. The first wall-crossing here is related to the wall-crossing in Section \ref{sec:local-orbifold} that gives a crepant transformation between $T_+$ and $\overline{T}_-$. The second wall-crossing here is related to the partial compactification as follows. Note that $\overline{T}_-=\tilde Y_-\setminus \bar{\tilde D}_{m+2}$ and $T_-=Y_-\setminus\bar{\tilde D}_{m+2}=Y_-\setminus X_\infty$. Moreover, the loci of indeterminacy are $\bar{\tilde D}_i\subset \tilde Y_-$ where $i\in \tilde{M}_-$ and $\bigcap_{i\in \tilde M_+}\bar{D}_i\subset X_\infty\subset Y_-$. By removing $X_\infty$, one removes the loci of indeterminacy in $Y_-$. Therefore, the complements are just related by partial compactification and the partial compactification is coming from the exceptional divisor in $\tilde Y_-$. The complement $\overline{T}_-\setminus T_-$ is codimension one.
\end{remark}

\if{
Write
\[
V_{+}:=\mathcal O_{Y_+}(-1), \text{ and } V_{-}:=\mathcal O_{Y_-}(-1).
\]
Their pullback to $Y_{\pm,X_\infty,r}$ are written as 
\[
V_{+,\bar{\tilde D}_{m+2},r}:=p^*_+(\mathcal O_{Y_+}(-1)), \text{ and } V_{-,r}:=p^*_-(\mathcal O_{Y_-}(-1)).
\]
Then Quantum Serre duality of \cite{Shoemaker18} implies that the ambient quantum D-module of $X_{\pm,D_\pm,r}$ is equivalent to the narrow quantum D-module of $V_{\pm,r}$. 
}\fi

The extended $I$-function of relative Gromov--Witten theory is given in \cite{FTY}*{Section 4.3}. It is taken as a limit of the extended $I$-function of root stacks which is obtained from the hypersurface construction as described in Section \ref{sec:hypersurface}. The extended $I$-function for a smooth pair $(X,D)$ is defined as follows. We take the extended data
\[
S:=\{a_1,\ldots,a_m\}.
\]
The $S$-extended $I$-function for $(X,D)$ is 
\[
I_{(X,D)}^{S}(Q,x,t,z)=I_{\on{pos}}+I_{\on{neg}},
\]
where
\begin{align*}
I_{\on{pos}}:=&ze^{t/z}\sum_{\substack{d\in\mathbb K,(k_1,\ldots,k_m)\in (\mathbb Z_{\geq 0})^m\\ \sum_{i=1}^mk_ia_i<D\cdot d} }y^{d}\left(\prod_{i=0}^{m}\frac{\prod_{a\leq 0,\langle a\rangle=\langle D_i\cdot d\rangle}(\bar{D}_i+az)}{\prod_{a \leq D_i\cdot d,\langle a\rangle=\langle D_i\cdot d\rangle}(\bar{D}_i+az)}\right)\frac{\prod_{i=1}^m x_i^{k_i}}{z^{\sum_{i=1}^m k_i}\prod_{i=1}^m(k_i!)}\\
&\frac{\prod_{0<a\leq D\cdot d}(D+az)}{D+(D\cdot d-\sum_{i=1}^mk_ia_i)z}\textbf{1}_{[-d]}[{\mathbf 1}]_{-D\cdot d+\sum_{i=1}^mk_ia_i},
\end{align*}
and 
\begin{align*}
I_{\on{neg}}:=&ze^{t/z}\sum_{\substack{d\in\mathbb K,(k_1,\ldots,k_m)\in (\mathbb Z_{\geq 0})^m\\ \sum_{i=1}^mk_ia_i\geq D\cdot d} }y^{d}\left(\prod_{i=0}^{m}\frac{\prod_{a\leq 0,\langle a\rangle=\langle D_i\cdot d\rangle}(\bar{D}_i+az)}{\prod_{a \leq D_i\cdot d,\langle a\rangle=\langle D_i\cdot d\rangle}(\bar{D}_i+az)}\right)\frac{\prod_{i=1}^m x_i^{k_i}}{z^{\sum_{i=1}^m k_i}\prod_{i=1}^m(k_i!)}\\
&\left(\prod_{0<a\leq D\cdot d}(D+az)\right)\textbf{1}_{[-d]}[{\mathbf 1}]_{-D\cdot d+\sum_{i=1}^mk_ia_i}.
\end{align*}

One can simplify the expression by considering the $H$-function. The $H$-function is
\begin{align}
H_{(X,D)}(y)=e^{\frac{t}{2\pi i}}\sum_{\substack{d\in\mathbb K\\ (k_1,\ldots,k_m)\in (\mathbb Z_{\geq 0})^m}}y^d\left(\frac{\Gamma (1+\frac{D}{2\pi i}+D\cdot d)}{\prod_{i=1}^m\Gamma (1+\frac{\bar D_i}{2\pi i}+D_i\cdot d)}\right)\textbf{1}_{[d]}[\textbf{1}]_{D\cdot d-\sum_{i=1}^mk_ia_i}\frac{\prod_{i=1}^m x_i^{k_i}}{\prod_{i=1}^m(k_i!)}.
\end{align}

We can state the relation between the quantum cohomology of $(X_+,D_+)$ and $(X_-,D_-)$ in terms of $H$-functions.

\begin{theorem}\label{thm-sm-extended}
The (extended) $H$-function of $(X_+,D_+)$ can be analytically continued to the (extended) $H$-function of $(X_-,D_-)$ under the specialization $y^\prime=0$. 
\end{theorem}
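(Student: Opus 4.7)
The plan is to use the hypersurface construction of Section \ref{sec:hypersurface} to transport the claim to a wall-crossing statement for toric orbifolds, and then to factor the induced toric wall-crossing as in Proposition \ref{prop-proj-bundle-II} into a crepant flop followed by a toric discrepant resolution, each handled by results already available. Fixing $r$ large, I would present the root stack $X_{\pm,D_\pm,r}$ as the zero locus of a section of $\mathcal{O}_Y(1)$ inside the toric ambient $Y_{\pm,X_\infty,r}$. By \cite{FTY} and \cite{TY20b}, the extended relative $I$-function of $(X_\pm,D_\pm)$ is obtained from the toric extended $I$-function of $Y_{\pm,X_\infty,r}$ by applying quantum Lefschetz for the hypersurface class and then taking $r\to\infty$. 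So it suffices to produce an analytic continuation of the extended toric $I$-function of $Y_{+,X_\infty,r}$ to that of $Y_{-,X_\infty,r}$ that becomes an equality after specializing $y'=0$.

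By Proposition \ref{prop-proj-bundle-II}, this toric wall-crossing factors through an intermediate stack $\tilde Y_{-,X_\infty,r}$: the first piece is a flop and the second piece is a toric discrepant resolution adding one ray. For the flop I would invoke the toric crepant transformation theorem of \cite{CIJ}, which supplies an analytic continuation together with a symplectic transformation identifying the two extended $I$-functions. For the discrepant step I would apply the argument of Section \ref{sec:local-orbifold} directly at the level of the ambient toric stacks: by Remark \ref{rmk-partial-compactification}, $\tilde Y_-$ is a partial compactification of the relevant open part of $Y_-$, with extra K\"ahler parameter $y'$ labelling the new curve class; the analogue of Theorem \ref{thm-specialization} then gives that setting $y'=0$ in the extended $I$-function of $\tilde Y_{-,X_\infty,r}$ recovers the extended $I$-function of $Y_{-,X_\infty,r}$, modulo the monodromy-invariant subquotient qualifier.

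Composing these two steps yields the desired analytic continuation with $y'=0$ specialization between the extended toric $I$-functions of $Y_{\pm,X_\infty,r}$. Applying quantum Lefschetz for the hypersurface class $c_1(\mathcal{O}_Y(1))$, passing to the limit $r\to\infty$ as in \cite{TY20c}, and converting from $I$-functions to $H$-functions via the $\hat\Gamma$-class identity (\ref{H-I}) then delivers the stated result. The main obstacle, and the step requiring the most bookkeeping, is verifying that quantum Lefschetz and the large-$r$ limit genuinely commute with the analytic continuation and with the specialization $y'=0$, and in particular that the symplectic transformation from the crepant step preserves the hypersurface class and is compatible with the extended variables $x_i$ on both sides. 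Since each individual ingredient is established in the cited toric and relative mirror theorems, the verification should amount to a careful accounting of characters and extended data in the GIT presentation of Section \ref{sec:wall-crossing}.
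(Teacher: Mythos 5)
Your scaffolding (hypersurface construction, factoring the induced wall-crossing via Proposition \ref{prop-proj-bundle-II}, \cite{CIJ} for the crepant piece, a $y'=0$ specialization at the end) follows the paper's strategy, but there is a genuine gap at the discrepant step. You propose to relate the \emph{untwisted} extended toric $I$-functions of the compact ambient stacks $\tilde Y_{-,X_\infty,r}$ and $Y_{-,X_\infty,r}$ by "the argument of Section \ref{sec:local-orbifold}" and an "analogue of Theorem \ref{thm-specialization}". Theorem \ref{thm-specialization}, and the \cite{MS} machinery behind it, is a statement about the \emph{narrow} quantum D-modules of the noncompact local models $T_-\subset \overline T_-$ (equivalently, via quantum Serre duality, about ambient hypersurface theories); it says nothing about the absolute extended $I$-functions of two compact toric stacks related by a discrepant resolution. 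For such a discrepant wall-crossing no analytic continuation of the absolute $I$-functions is available at all --- one side fails to be analytic in the exceptional variable, which is precisely why \cite{AS} resorts to regularized $I$-functions and asymptotic expansions --- so the step you lean on has nothing to stand on. Note also that Remark \ref{rmk-partial-compactification} says $\overline T_-=\tilde Y_-\setminus \bar{\tilde D}_{m+2}$ partially compactifies $T_-=Y_-\setminus X_\infty$; it does not assert that $\tilde Y_-$ partially compactifies an open part of $Y_-$ in a way that compares their toric $I$-functions by setting $y'=0$.

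The order of operations is therefore the crux, not bookkeeping. The paper's proof first converts the hypersurface problem into a twisted/local one: by orbifold quantum Lefschetz \emph{and} quantum Serre duality of \cite{Tseng}, the ambient part of the root-stack theory is traded for the toric total space $L_{\pm,r}$ of $p^*\mathcal O_{Y_\pm}(-1)$. At that level the second wall-crossing is no longer a discrepant resolution but a flip which "becomes crepant" as $r\to\infty$; the factor $\Gamma\bigl(1+\bar{\tilde D}_{m+2}/r+(\tilde D_{m+2}\cdot d)/r\bigr)^{-1}$ tends to $1$, so the \cite{CIJ}-type continuation commutes with the limit $r\to\infty$; and only then do Proposition \ref{prop-compactification} and Theorem \ref{thm-specialization} give the $y'=0$ specialization between $\overline L_{-,\infty}$ and $L_{-,\infty}$. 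If you postpone the twist (your final quantum Lefschetz step) until after the wall-crossing analysis, the intermediate continuation across the discrepant wall that your argument requires does not exist; inserting quantum Serre duality at the start, so that all continuations are performed for the line-bundle total spaces rather than for $Y_{\pm,X_\infty,r}$ themselves, is exactly what repairs the argument and is the paper's actual route.
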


\begin{proof}

The proof consists of the following steps.

\begin{itemize}
    \item[Step 1] We consider the $I$-functions of $X_{D,r}$. By the hypersurface construction, We just need to study the $I$-function of $Y_{X_{\infty},r}$ and apply the orbifold quantum Lefschetz principle of \cite{Tseng}. By the orbifold quantum Serre duality of \cite{Tseng}, it is equivalent to considering the $I$-function of the total space of the line bundle $p^*\mathcal O_{Y}(-1)$. This is a toric stack and we will denote it by $L_r$. We use $\tilde D_{m+2}/r$ to denote the toric divisor with the $r$-th root construction. 
    
    \item[Step 2] Recall that $Y_+$ and $Y_-$ are related by two wall-crossings: the first wall-crossing is a flop and the second wall-crossing is a discrepant resolution. We consider how the toric stack $L_{+,r}$ changes under these two wall-crossings.
    The first wall-crossing relates the $H$-function of $L_{+,r}$ and the $H$-function of $\tilde L_{-,r}$ via analytic continuation in \cite{CIJ}*{Section 6.2}, where $\tilde L_{-,r}$ is the toric stack obtained from crossing the first wall. Note that the toric divisor $\tilde D_{m+2}/r$ is on the wall. The analytic continuation in \cite{CIJ}*{Section 6.2} essentially does not affect the factor involving $\tilde D_{m+2}/r$. Hence the analytic continuation is compatible with the limit $r\rightarrow \infty$. 
        \item[Step 3] The second wall-crossing is a flip (it ``becomes crepant" when $r\rightarrow \infty$). The toric stack obtained by crossing the second wall is denoted by $\overline{L}_{-,r}$. The $H$-function of $\tilde L_{-,r}$ is analytic. Note that $\tilde D_{m+2}/r$ is the $r$-th root of ${\tilde D}_{m+2}$ and it is on the positive side of the wall (i.e. $m+2\in \tilde M_{+}$). There is a factor of $\frac{1}{\Gamma(1+\bar{\tilde D}_{m+2}/r+(\tilde D_{m+2}\cdot d)/r)}$ in the $H$-function and it becomes $1/\Gamma(1)=1$ when $r\rightarrow \infty$. Follow the analytic continuation computation of \cite{CIJ}*{Section 6.2} carefully, the factor $\frac{1}{\Gamma(1+\bar{\tilde D}_{m+2}/r+(\tilde D_{m+2}\cdot d)/r)}$ will again be $1$ as $r\rightarrow \infty$ after analytic continuation. In other words, this analytic continuation is again compatible with the limit $r\rightarrow \infty$ and one obtains the analytic continuation between the $H$-function of $\tilde L_{-,\infty}$ and the $H$-function of $\overline{L}_{-,\infty}$.
        \item[Step 4] Proposition \ref{prop-compactification} can be applied here and implies that $\tilde L_{-,r}$ is a partial compactification of $L_{-,r}$. Then Theorem \ref{thm-specialization} implies that their $I$-functions are related by specialization of $y^\prime=0$. This specialization is also compatible with the limit $r\rightarrow \infty$. This concludes the proof. 
\end{itemize}

\end{proof}

\begin{remark}
The case of toric complete intersections works similarly, by combining the discussion in this section and the discussion in Section \ref{sec:complete-intersection-1}. So we will not repeat it here.
\end{remark}

\section{Connection to transitions}\label{sec:transition}

Let $\phi: \xymatrix{
X_+\ar@{-->}[r]& X_-}$ be a toric blow-up given by a Type II toric wall-crossing. Let $D_+$ and $D_-$ be smooth and given by
\[
D_+=\sum_{i\in (M_+\cup \{j\})} \bar D_i\subset X_+ \text{ and } D_-=\sum_{i\in M_+} \bar D_i\subset X_-. 
\]
When $D_+$ and $D_-$ are nef, Theorem \ref{thm-narrow-qd} states that the narrow relative quantum D-modules of $(X_+,D_+)$ and $(X_-,D_-)$ are related by analytic continuation and specialization. On the other hand, the divisors $D_+\subset X_+$ and $D_-\subset X_-$ are related by transitions \cite{MS}*{Proposition 4.5}. \cite{MS}*{Theorem 1.1} states that the ambient quantum D-modules of $D_+$ and $D_-$ are related by analytic continuation and specialization. Indeed, Theorem \ref{thm-narrow-qd} (in the case of toric blow-ups) follows from \cite{MS}*{Theorem 1.1} by the local-relative correspondence \cite{vGGR}, \cite{TY20b}, \cite{BNTY} and the quantum Serre duality. 

There is a rank reduction phenomenon caused by the specialization. We provide a partial answer to it. It is not surprising that the rank reduction has to do with the curves that are contracted by the map $\phi$. In this section, we will discuss the rank reduction phenomenon for transitions, the rank reduction phenomenon for the log-crepant transformation of smooth pairs works the same. 

In this section, we use slightly different notation to emphasize that we are considering blow-ups. We write 
\[
X:=X_-, \quad \tilde X:=X_+, \quad, D:=D_-, \quad \tilde D:=D_+. 
\]

We can simply consider the fan sequence (\ref{fan-seq-blow-up}) for the blow-up:
\begin{equation}
0 \longrightarrow \mathbb{L}\oplus \mathbb Z \longrightarrow \mathbb{Z}^{m+1} \stackrel{\tilde{\beta}}{\longrightarrow} N\longrightarrow 0,
\end{equation}
such that the map $\tilde{\beta}: \mathbb Z^{m+1}\rightarrow N$ is given by $\{\beta_1,\ldots,\beta_m,\beta_{m+1}\}$ and the map $\mathbb L\oplus \mathbb Z\rightarrow \mathbb Z^{m+1}$ is given by $(\tilde {D}_1,\ldots,\tilde {D}_m,\tilde{D}_{m+1})$ where
\[   
\tilde D_i=\left\{
\begin{array}{ll}
      D_i\oplus 0 & \text{if $i\in\{1,\ldots,m\}$ and $D_i\cdot e\leq 0$} \\
      D_i\oplus (-D_i\cdot e) & \text{if $i\in\{1,\ldots,m\}$ and $D_i\cdot e> 0$} \\
      0\oplus 1 & \text{If $i=m+1$}.
\end{array} 
\right. \]
The blown-up variety $\tilde X$ is the toric Deligne--Mumford stack given by the stability condition $\tilde{\omega}:=(\omega_0,-\varepsilon)$, where $\omega_0$ is in the relative interior of $W\cap \overline{C_+}=W\cap \overline{C_-}$ and $\varepsilon$ is a positive and sufficiently small real number. 
%Note that now the notation is slightly different from the previous sections.

%\textcolor{red}{The notation is not right}

Let 
\[
Z=\cap_{i\in M_+} \{z_i=0\}\subset X
\]
be the center of the blow-up. Then the exceptional divisor $E\subset \tilde X$ is a projectivization of the normal bundle of $Z$ in $X$:
\[
\pi: E:=\mathbb P(N_{Z/X})\rightarrow Z.
\]
Then the fiber curve classes are contracted by $\pi$. Let $w_i=D_i\cdot e$ for $i\in M_+$, then the fiber of $\pi$ is $\mathbb P[w_1,\ldots, w_k]$, where $k=\on{codim} Z=\#\{M_+\}$. We would like to know what is the subvariety that is contracted under the transition.

\begin{itemize}
    \item If $\dim Z=0$, we can assume $Z$ is a point without loss of generality. Then $E=\mathbb P[w_1,\ldots, w_n]$. The intersection of the hypersurface $D_+$ and $E$ is of dimension $n-2$. The transition $\xymatrix{
\tilde D\ar@{-->}[r]& D}$ contracts a hypersurface $S$ in the weighted projective space $\mathbb P[w_1,\ldots, w_n]$ of degree $(-1+\sum_{i=1}^n w_i)$, where $n=\dim X_\pm=\#\{M_+\}$.
\item If $\dim Z>0$, let $k=\on{codim} Z=\#\{M_+\}$. Then $\dim (Z\cap D)=n-1-k$ and $\dim (E\cap \tilde D)=n-2$, the transition $\xymatrix{
\tilde D\ar@{-->}[r]& D}$ contracts a $(k-1)$-dimensional space which is the fibers $\mathbb P[w_1,\ldots, w_k]$ over $Z$.
\end{itemize}
 
 \begin{example}
 Let 
 \[
 X=\mathbb P^4 \text{ and, } \tilde X=\on{Bl}_{\on{pt}}\mathbb P^4.
 \]
 Let $Q_5\subset \mathbb P^4$ be a smooth quintic threefold and $\tilde Q_5\subset \tilde X$ be a smooth Calabi--Yau threefold defined by a regular section of the anticanonical bundle $-K_{\tilde X}$. The map $\xymatrix{
\tilde Q_5\ar@{-->}[r]& Q_5}$ contracts a cubic surface $S$ in $\mathbb P^3$. Then $Q_5$ and $\tilde Q_5$ are related by cubic extremal transition which was studied by \cite{Mi}.
 \end{example}
 
\begin{example}
Let
\[
X=\mathbb P^4 \text{ and, } \tilde X=\on{Bl}_{\mathbb P^2} \mathbb P^4.
\]
Let $Q_5\subset \mathbb P^4$ be a smooth quintic threefold and $\tilde Q_5\subset \tilde X$ be a smooth Calabi--Yau threefold defined by a regular section of the anticanonical bundle $-K_{\tilde X}$. Then $Q_5$ and $\tilde Q_5$ are related by a conifold transition. The map $\xymatrix{
\tilde Q_5\ar@{-->}[r]& Q_5}$ contracts $\mathcal O_{\mathbb P^1}(-1)\oplus \mathcal O_{\mathbb P^1}(-1)$-curves.  
\end{example}

We would like to consider the local Gromov--Witten theory of the total space of the normal bundle of $S$ or $\mathbb P[w_1,\ldots, w_k]$ in $\tilde D$. On can apply quantum Serre duality to the first case. Then, in both cases, we can consider the local Gromov--Witten theory of $\mathcal O_{\mathbb P}(-1)\oplus \mathcal O_{\mathbb P}(1-\sum_{i=1}^k w_i)$, where $\mathbb P:=\mathbb P[w_1,\ldots, w_k]$ and $k=\on{codim} Z=\#\{M_+\}$. 

%\textcolor{red}{The normal bundle is $\mathcal O_{\mathbb P}(-1)\oplus \mathcal O_{\mathbb P}(1-\sum_{i=1}^k w_i)\oplus \mathcal O\cdots \mathcal O$???}

Recall that the main result of \cite{MS} (see also \cite{Mi} for a cubic extremal transition) can be stated in terms of $I$-functions as follows. There is an explicit degree-preserving linear transformation $L: H^*(\tilde D)\rightarrow H^*(D)$ such that $I_D(y)$ is recovered by
\[
I_{ D}( y)=\lim_{y_{\mathrm{r}+1}\rightarrow 0} L\circ \bar I_{\tilde D}(y),
\]
where $\bar I_{\tilde D}(y)$ is obtained from $I_{\tilde D}(\tilde y)$ via analytic continuation. 

%\textcolor{red}{Rewrite this theorem}

\begin{theorem}\label{thm-transition}
Let $\iota$ be the inclusion map $\iota: \mathbb P\hookrightarrow \tilde X$. Then $\iota^*I_{\tilde D}( \tilde y)|_{ \tilde y_i=0, i\in \{1,\ldots, \mathrm{r}\}}$ coincides with $I_{\mathcal O_{\mathbb P}(-1)\oplus \mathcal O_{\mathbb P}(1-\sum_{i=1}^k w_i)}(\tilde y_{\mathrm{r}+1})$. Under this identification, we have
\[
\lim_{y_{\mathrm{r}+1}\rightarrow 0} \bar I_{\mathcal O_{\mathbb P}(-1)\oplus \mathcal O_{\mathbb P}(1-\sum_{i=1}^k w_i)}( y)=0,
\]
where $\bar I_{\mathcal O_{\mathbb P}(-1)\oplus \mathcal O_{\mathbb P}(1-\sum_{i=1}^k w_i)}(y)$ is obtained from $ I_{\mathcal O_{\mathbb P}(-1)\oplus \mathcal O_{\mathbb P}(1-\sum_{i=1}^k w_i)}(\tilde y_{\mathrm{r}+1})$ after the analytic continuation in \cite{MS}.
\end{theorem}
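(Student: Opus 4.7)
My plan is to prove both parts by a direct toric computation (for the identification of $I$-functions) and by a Mellin--Barnes argument (for the vanishing).

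For the identification, I start with $I_{\tilde D}(\tilde y, z)$ written as the quantum Lefschetz modification of $I_{\tilde X}(\tilde y, z)$, built from the toric data of the blow-up fan sequence (\ref{fan-seq-blow-up}). Along $\iota:\mathbb{P}\hookrightarrow\tilde X$, the pullbacks are $\iota^*\tilde D_i=w_iH$ for $i\in M_+$, $\iota^*\tilde D_i=0$ for $i\in M_0$, $\iota^*E=-H$, and consequently $\iota^*\tilde D=(\sum_{i\in M_+}w_i-1)H$, where $H$ is the hyperplane class of $\mathbb{P}=\mathbb{P}[w_1,\ldots,w_k]$. Setting $\tilde y_i=0$ for $i\leq\mathrm{r}$ restricts the sum to fiber classes $d=d_{\mathrm{r}+1}\ell$, for which $\tilde D_i\cdot d=w_id_{\mathrm{r}+1}$ ($i\in M_+$), $E\cdot d=-d_{\mathrm{r}+1}$, $\tilde D_i\cdot d=0$ ($i\in M_0$), and $\tilde D\cdot d=(\sum w_i-1)d_{\mathrm{r}+1}$. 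Substituting, the hypergeometric factor attached to $E$ gives $\prod_{a=-d_{\mathrm{r}+1}+1}^{0}(-H+az)$, matching the concave contribution of $\mathcal O_{\mathbb{P}}(-1)$; the quantum Lefschetz factor gives $\prod_{a=1}^{(\sum w_i-1)d_{\mathrm{r}+1}}((\sum w_i-1)H+az)$, matching the (convex form of the) contribution of $\mathcal O_{\mathbb{P}}(1-\sum w_i)$ via the standard quantum Serre duality identification of \cite{Shoemaker18}; and the remaining denominators $\prod_{i\in M_+}\prod_{a=1}^{w_id_{\mathrm{r}+1}}(w_iH+az)$ are exactly those of the trivial $I$-function of $\mathbb{P}$. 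Combining these produces $I_{\mathcal O_{\mathbb{P}}(-1)\oplus\mathcal O_{\mathbb{P}}(1-\sum w_i)}(\tilde y_{\mathrm{r}+1})$.

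For the vanishing, the change of variables (\ref{change-of-variables}) gives $\tilde y_{\mathrm{r}+1}=y_{\mathrm{r}+1}^{-c}$ with $c>0$, so $y_{\mathrm{r}+1}\to 0$ corresponds to $\tilde y_{\mathrm{r}+1}\to\infty$. I represent $I_V$ as a Mellin--Barnes contour integral of ratios of Gamma functions, following the prescription of \cite{MS}, and deform the contour to obtain the asymptotic expansion $\bar I_V$ of $I_V$ near $y_{\mathrm{r}+1}=0$. The key observation is that $c_1(V)=-\sum w_i\,H$ is strictly negative on $\mathbb{P}$ (equivalently, the total space of $V$ is Calabi--Yau over $\mathbb{P}$), so the only poles of the integrand that contribute to the expansion at $y_{\mathrm{r}+1}=0$ lie at values of the Mellin variable $s$ producing strictly positive exponents $\alpha>0$ of $y_{\mathrm{r}+1}$. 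Every residue therefore yields a term divisible by $y_{\mathrm{r}+1}$, no constant term survives, and $\lim_{y_{\mathrm{r}+1}\to 0}\bar I_V=0$.

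The main obstacle will be to justify the Mellin--Barnes residue argument rigorously and match it with the analytic continuation of \cite{MS} applied to $\iota^*I_{\tilde D}$; concretely one must track exactly which poles are crossed during the contour deformation, confirm that no residue produces a $y_{\mathrm{r}+1}^0$ contribution, and verify that the resulting fractional power series coincides with the \cite{MS} continuation restricted to the $\iota^*$-subseries. The positivity of the exponents is a consequence of the concavity of $V$, but establishing it precisely requires analysing the GKZ hypergeometric system governing $I_V$ in the large-$\tilde y_{\mathrm{r}+1}$ regime and identifying its secondary fan directions near the wall.
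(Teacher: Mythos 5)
Your route is essentially the paper's: restrict an explicit toric hypergeometric series to the fiber classes of $E\to Z$ to identify it with the local $I$-function on $\mathbb P$, then appeal to the Mellin--Barnes continuation of \cite{MS} and argue that only strictly positive powers of $y_{\mathrm{r}+1}$ survive, so the limit vanishes. Two caveats on the first half. The paper performs the restriction on the local $I$-function of $\mathcal O_{\tilde X}(-\tilde D)$, whose modification factor is already of concave type, so the fiber restriction literally reproduces $I_{\mathcal O_{\mathbb P}(-1)\oplus\mathcal O_{\mathbb P}(1-\sum_i w_i)}$; your convex quantum Lefschetz factor $\prod_{a=1}^{(\sum_i w_i-1)d}\bigl((\sum_i w_i-1)H+az\bigr)$ is \emph{not} equal to the concave factor $\prod_{a=0}^{(\sum_i w_i-1)d-1}\bigl(-(\sum_i w_i-1)H-az\bigr)$ (they differ by $(-1)^{(\sum_i w_i-1)d}\,(\sum_i w_i-1)H/\bigl((\sum_i w_i-1)H+(\sum_i w_i-1)dz\bigr)$), so if you insist on starting from the hypersurface $I$-function you must spell out the quantum Serre duality dictionary rather than cite it in passing. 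Also, for a weighted blow-up the fiber is $\mathbb P[w_1,\ldots,w_k]$, and the restricted series runs over fractional degrees $n\in\mathbb Q_{\geq 0}$ with twisted-sector classes $\mathbf 1_{[-n]}$; your integer-indexed products only cover the unweighted case.

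The genuine gap is in the second half. Your ``key observation'' --- that $c_1(V)=-\sum_i w_i H$ is strictly negative, ``equivalently'' the total space is Calabi--Yau --- is not a correct equivalence (Calabi--Yau means $\det V\cong K_{\mathbb P}$, which happens to hold, but is a different statement), and negativity of $c_1(V)$ by itself does not show that no $y_{\mathrm{r}+1}^0$ term survives; you then explicitly defer exactly this verification as ``the main obstacle,'' which is the entire content of the vanishing claim. The mechanism is concrete and is what the paper relies on (and carries out explicitly in its cubic example in the FJRW section): writing the restricted series in $\Gamma$-function form and deforming the contour, the only poles crossed are those of the kernel $1/(e^{2\pi i s}-1)$ at negative integers and those of the single numerator $\Gamma$-factor attached to $\tilde D$, located at $s=-H/z-m/(\sum_i w_i-1)$ with $m\geq 1$; every crossed pole has strictly negative rational part, so under $\tilde y_{\mathrm{r}+1}=y_{\mathrm{r}+1}^{-c}$ with $c>0$ each residue carries a strictly positive power of $y_{\mathrm{r}+1}$, and possible integer-pole contributions are killed by nilpotency of $H$. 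Until you track the crossed poles in this way and match the result with the continuation of \cite{MS} restricted to the fiber subseries, the second assertion of the theorem is stated but not proved in your proposal.
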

\begin{proof}

Recall that the local $I$-function for $\mathcal O_{\tilde X}(-\tilde D)$ is
\[
I_{\mathcal O_{\tilde X}(-\tilde D)}(\tilde y,z)=ze^{\tilde t/z}\sum_{d\in\tilde{\mathbb K}}\tilde y^{d}\left(\prod_{i=0}^{m+1}\frac{\prod_{a\leq 0,\langle a\rangle=\langle \tilde D_i\cdot d\rangle}(\bar{\tilde D}_i+az)}{\prod_{a \leq \tilde D_i\cdot d,\langle a\rangle=\langle \tilde D_i\cdot d\rangle}(\bar{ \tilde D}_i+az)}\right)\frac{\prod_{a\leq 0,\langle a\rangle=\langle -\tilde D\cdot d\rangle}(-\bar{\tilde D}+az)}{\prod_{a \leq -\tilde D\cdot d,\langle a\rangle=\langle -\tilde D\cdot d\rangle}(-\bar{ \tilde D}_i+az)}\textbf{1}_{[-d]},
\]
where $\tilde{\mathbb K}=\mathbb K\oplus \mathbb R(0,-1)$, $\tilde t=\sum_{a=1}^{\mathrm{r}+1} \bar p_a \log \tilde y_a$, $ \tilde y^d= \tilde y_1^{p_1\cdot d}\cdots \tilde y_{\mathrm{r}+1}^{p_{\mathrm{r}+1}\cdot d}$ and, $[-d]$ is the equivalence class of $-d$ in $\mathbb K/ \mathbb L$.

We restrict to the $I$-function to $\tilde y_a=0$ for $1\leq a \leq \mathrm{r}$ and consider the pullback of it under the inclusion map $\iota: \mathbb P\hookrightarrow \tilde X$. Then we have the $I$-function for $\mathcal O_{\mathbb P}(-1)\oplus \mathcal O_{\mathbb P}(1-\sum_{i=1}^k w_i)$:
\begin{align*}
&I_{\mathcal O_{\mathbb P}(-1)\oplus \mathcal O_{\mathbb P}(1-\sum_{i=1}^k w_i)}( \tilde y_{\mathrm{r}+1},z)=ze^{P/z}\sum_{\substack{n\in \mathbb Q_{\geq 0}\\ \exists j: nw_j\in \mathbb Z }} \tilde y_{\mathrm{r}+1}^{n}\left(\prod_{i\in M_+}\frac{1}{\prod_{a \leq w_in,\langle a\rangle=\langle  w_in\rangle}(w_iP+az)}\right)\\
\qquad &\cdot\frac{1}{\prod_{a \leq -n,\langle a\rangle=\langle  -n\rangle}(-P+az)}\frac{1}{\prod_{a \leq 1-\sum_{i=1}^kw_in,\langle a\rangle=\langle  1-\sum_{i=1}^kw_in\rangle}((1-\sum_{i=1}^kw_i)P+az)}\textbf{1}_{[-n]},
\end{align*}
where $P\in H^2(\mathbb P)$ is the hyperplane class.

Moreover, after analytic continuation, $I_{\mathcal O_{\mathbb P}(-1)\oplus \mathcal O_{\mathbb P}(1-\sum_{i=1}^k w_i)}(\tilde y_{\mathrm{r}+1},z)$ becomes a function of $y_{\mathrm{r}+1}$, hence specializes to zero under the specialization in \cite{MS}*{Theorem 1.3} where the specialization $y_{\mathrm{r}+1}=0$ was considered.

\end{proof}

\begin{remark}
The main result of \cite{MS} is obtained via quantum Serre duality and the specialization of a variable is related to a partial compactification of the corresponding line bundle. The geometric explanation in remark \ref{rmk-partial-compactification} also provides an indication of Theorem \ref{thm-transition}. 
\end{remark}

\begin{remark}
In \cite{Mi}, the rank reduction phenomenon was partially explained as the FJRW theory for an example of cubic extremal transitions. It was shown that the regularized FJRW $I$-function satisfies a Picard--Fuchs equation which, after analytic continuation, is satisfied by the $I$-function of the cubic extremal transitions. Here, we give a general explanation of it directly in terms of Gromov--Witten theory. One may wonder how the $I$-function for local/relative Gromov--Witten theory that we considered here is related to the regularized FJRW $I$-function. We will answer this question in the next section and understand it as a result of the \emph{Landau--Ginzburg/ log Calabi--Yau correspondence}. The analytic continuation  for the example in \cite{Mi} will be explicitly worked out. 
\end{remark}

\section{Connection to FJRW theory}

Landau--Ginzburg/Calabi--Yau (LG/CY) correspondence  arises from a variation of the GIT quotient in Witten’s gauged linear sigma model (GLSM) \cite{Witten}. The LG/CY correspondence describes a relationship between sigma models based on Calabi-Yau hypersurfaces in weighted projective spaces and the Landau-Ginzburg model of the defining equation of the Calabi-Yau. The LG/CY correspondence was generalized to the Fano case and general type case by \cite{Acosta}. In the Calabi--Yau case, the $I$-functions of Gromov--Witten theory and FJRW theory are related by analytic continuation. However, the analytic continuation is impossible in the Fano case because the Picard--Fuchs operator develops an irregular singularity at the Landau--Ginzburg point and the $I$-function has radius of convergence equal to zero at the irregular singular point. In \cite{Acosta}, the author used asymptotic expansions to relate the $I$-functions. This is known as the Landau--Ginzburg/Fano (LG/Fano) correspondence.

We propose a different version of the LG/Fano correspondence by using relative Gromov--Witten invariants instead of absolute Gromov--Witten invariants of the Fano variety. We believe it is natural when we consider mirror symmetry for Fano varieties. As mentioned in Section \ref{sec:exchange-divisors}, when we consider a Landau--Ginzburg model $(X^\vee, W)$ as a mirror to a Fano variety $X$, we usually expect the generic fiber of the Landau--Ginzburg model is mirror the the smooth anticanonical divisor $D$ of the Fano variety $X$. When we consider mirror symmetry for a Fano variety $X$, it is usually natural to consider mirror symmetry for the log Calabi--Yau pair $(X,D)$ instead. Hence, relative Gromov--Witten invariants of $(X,D)$ appear naturally in mirror symmetry. Instead of considering the LG/Fano correspondence, we consider the \emph{Landau--Ginzburg/log Calabi--Yau (LG/(log CY)) correspondence}. In terms of $I$-functions, we claim that the $I$-function for the log Calabi--Yau pair $(X,D)$ can be analytically continued to the regularized $I$-function for the FJRW theory.

In Section \ref{sec:transition}, we explained that the rank reduction phenomenon is related to the local Gromov--Witten invariants of weighted projective spaces. On the other hand, \cite{Mi} explains how the rank reduction phenomenon is related to the FJRW theory in an example of cubic extremal transition. By the local-orbifold correspondence of \cite{vGGR}. \cite{TY20b} and \cite{BNTY}, it is natural to expect the corresponding relative Gromov--Witten invariants are related to the FJRW theory of the singularity. 

\subsection{Example: cubic surface} We begin with the example in \cite{Mi}. We claim that the regularized FJRW $I$-function is actually obtained by the analytic continuation of the local Gromov--Witten theory of the total space of the canonical bundle $K_S$, where $S$ is a cubic surface. Recall that the $I$-function for $K_S$ is
\begin{align}
    I_{K_S}(q)=(3H)q^{H/z}\sum_{d\geq 0}q^d\frac{\prod_{k=1}^{3d}(3H+kz)(-1)^d\prod_{k=0}^{d-1}(H+kz)}{\prod_{k=1}^d(H+kz)^4}.
\end{align}

To study analytic continuation, we write the $I$-function in terms of $\Gamma$-functions:
\begin{align*}
    I_{K_S}(q)&=3Hq^{H/z}\frac{\Gamma(1+\frac H z)^4\Gamma(1-\frac H z)}{\Gamma(1+\frac{3H}{z})}\sum_{d\geq 0}q^d\frac{\Gamma (1+\frac{3H}{z}+3d)}{\Gamma(1+\frac H z +d)^4\Gamma (1-\frac H z-d)}\\
    &=3Hq^{H/z}\frac{\Gamma(1+\frac H z)^4\Gamma(1-\frac H z)}{\Gamma(1+\frac{3H}{z})}\sum_{d\geq 0}\on{Res}_{s=d}\frac{2\pi i}{e^{2\pi i s}-1}q^s\frac{\Gamma (1+\frac{3H}{z}+3s)}{\Gamma(1+\frac H z +s)^4\Gamma (1-\frac H z-s)}.
\end{align*}

We consider the contour integral
\begin{align}\label{contour-int}
3Hq^{H/z}\frac{\Gamma(1+\frac H z)^4\Gamma(1-\frac H z)}{\Gamma(1+\frac{3H}{z})}\int_C\frac{1}{e^{2\pi i s}-1}q^s\frac{\Gamma (1+\frac{3H}{z}+3s)}{\Gamma(1+\frac H z +s)^4\Gamma (1-\frac H z-s)},
\end{align}
where the contour $C$ is chosen such that the poles at $s=l$ are on the right of $C$ and the poles at $s=-1-l$ and at 
\[
s=-\frac{H}{z}-\frac{m}{3},  m\geq 1
\]
are on the left of $C$, where $l$ is a non-negative integer.

Then (\ref{contour-int}) is the sum of residue at
\[
s=-1-l, l\geq 0 \quad \text{and} \quad s=-\frac{H}{z}-\frac{m}{3}, m\geq 1.
\]
The residue at $s=-1-l$ vanishes because they are multiples of $H^4=0$.
Note that 
\[
\on{Res}_{s=-\frac{H}{z}-\frac{m}{3}} \Gamma (1+\frac{3H}{z}+3s)=-\frac 13 \frac{(-1)^m}{\Gamma(m)}.
\]
We obtain the following analytic continuation of $I_{K_S}$:
\begin{align}
    \bar{I}_{K_S}=-H\frac{\Gamma(1+\frac H z)^4\Gamma(1-\frac H z)}{\Gamma(1+\frac{3H}{z})}\sum_{m\geq 0}(-1)^m\frac{(2\pi i)e^{\frac{2\pi im}{3}}}{e^{-2\pi i \frac{H}{z}}-e^{\frac{2\pi im}{3}}}\frac{q^{-\frac m 3}}{\Gamma(m)\Gamma(1-\frac m 3)^4\Gamma(1+\frac m 3)}.
\end{align}
Let $t^3=q^{-1}$, then we have 
\begin{align*}
     \bar{I}_{K_S}&=-H\frac{\Gamma(1+\frac H z)^4\Gamma(1-\frac H z)}{\Gamma(1+\frac{3H}{z})}\sum_{m\geq 0}(-1)^m\frac{(2\pi i)e^{\frac{2\pi im}{3}}}{e^{-2\pi i \frac{H}{z}}-e^{\frac{2\pi im}{3}}}\frac{t^{ m }}{\Gamma(m)\Gamma(1-\frac m 3)^4\Gamma(1+\frac m 3)}\\
     &=-H\frac{\Gamma(1+\frac H z)^4\Gamma(1-\frac H z)}{\Gamma(1+\frac{3H}{z})}\sum_{m\geq 0}(-1)^m\frac{(2\pi i)e^{\frac{2\pi im}{3}}}{e^{-2\pi i \frac{H}{z}}-e^{\frac{2\pi im}{3}}}\frac{t^{ m }\Gamma(\frac m 3)^4\sin^4(\frac m 3 \pi)}{\pi^4\Gamma(m)\Gamma(1+\frac m 3)}\\
     &=-H\frac{\Gamma(1+\frac H z)^4\Gamma(1-\frac H z)}{\Gamma(1+\frac{3H}{z})}\sum_{k=1,2}\frac{(2\pi i)e^{\frac{2\pi ik}{3}}}{e^{-2\pi i \frac{H}{z}}-e^{\frac{2\pi ik}{3}}}\frac{(-1)^k}{\Gamma(\frac k 3)^4\Gamma (1-\frac k 3)^4}\sum_{l\geq 0}\frac{t^{ k+3l }\Gamma(\frac {k+3l}{3})^4}{\Gamma(k+3l)\Gamma(1+\frac k 3+l)}\\
     &=-H\frac{\Gamma(1+\frac H z)^4\Gamma(1-\frac H z)}{\Gamma(1+\frac{3H}{z})}\sum_{k=1,2}(-1)^k\frac{(2\pi i)e^{\frac{2\pi ik}{3}}}{e^{-2\pi i \frac{H}{z}}-e^{\frac{2\pi ik}{3}}}\frac{(-1)^k}{\Gamma (1-\frac k 3)^4}\sum_{l\geq 0}\frac{t^{ k+3l }\Gamma(\frac {k}{3}+l)^4}{\Gamma(\frac k 3)^4\Gamma(k+3l)\Gamma(1+\frac k 3+l)}.
\end{align*}

Let $W=x_1^3+x_2^3+x_3^3+x_4^3$, we consider the FJRW theory of the pair $(W,G)$, where $G=\langle J_W\rangle$ is generated by
\[
J_W:=\left(\exp\left(\frac{2\pi i}{3}\right),\exp\left(\frac{2\pi i}{3}\right),\exp\left(\frac{2\pi i}{3}\right),\exp\left(\frac{2\pi i}{3}\right) \right)\in (\mathbb C^\times)^4.
\]
Recall that the regularized $I$-function for FJRW theory of $(W,G)$ is given in \cite{Acosta} as follows
\begin{align*}
I^{\on{reg}}_{\on{FJRW}}(t):=&I^{\on{reg}}_{\on{FJRW},1}\phi_0+I^{\on{reg}}_{\on{FJRW},2}\phi_1\\
=&\sum_{l\geq 0}\frac{t^{ 1+3l }\Gamma(\frac {1}{3}+l)^4}{\Gamma(\frac 1 3)^4(3l)!\Gamma(\frac 4 3+l)}\phi_0+\sum_{l\geq 0}\frac{t^{ 2+3l }\Gamma(\frac {2}{3}+l)^4}{\Gamma(\frac 2 3)^4(1+3l)!\Gamma(\frac 5 3+l)}\phi_1,
\end{align*}
where $\phi_0$ and $\phi_1$ are generators of the narrow state space of the FJRW theory of $(W,G)$.
Therefore, we have
\begin{align*}
     \bar{I}_{K_S}=-H\frac{\Gamma(1+\frac H z)^4\Gamma(1-\frac H z)}{\Gamma(1+\frac{3H}{z})}\sum_{k=1,2}(-1)^k\frac{(2\pi i)e^{\frac{2\pi ik}{3}}}{e^{-2\pi i \frac{H}{z}}-e^{\frac{2\pi ik}{3}}}\frac{(-1)^k}{\Gamma (1-\frac k 3)^4}I^{\on{reg}}_{\on{FJRW},k}.
\end{align*}
Hence, the local $I$-function of $K_S$ can be analytically continued to the regularized $I$-function of the FJRW theory of cubic singularity. By the local-orbifold correspondence  and the relative-orbifold correspondence \cite{ACW}, \cite{vGGR}, \cite{TY20b},  \cite{TY18}, we may identify the narrow relative quantum D-module of $(S,-K_S)$ with the narrow quantum D-module of the FJRW theory of $(W,G)$. Therefore, we conclude the following.
\begin{theorem}\label{thm-log-lg-cy-cubic}
the narrow relative quantum D-module of $(S,-K_S)$ can be identified with the narrow quantum D-module of $(W,G)$ via the analytic continuation. 
\end{theorem}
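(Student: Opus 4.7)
The plan is to prove Theorem \ref{thm-log-lg-cy-cubic} by exhibiting an explicit analytic continuation of $I$-functions and then invoking Givental's formalism, which reconstructs the narrow quantum $D$-module from any single point on the Lagrangian cone via the fundamental solution $L(t,z)$. First I would apply the local/relative correspondence of \cite{vGGR}, \cite{TY20b} and \cite{BNTY}, combined with the orbifold-relative correspondence of \cite{ACW} and \cite{TY18}, to identify the narrow relative quantum $D$-module of $(S,-K_S)$ with the narrow quantum $D$-module of the total space of $K_S$. Since the cubic surface $S$ is a hypersurface in $\mathbb P^3$ with $K_S=\mathcal O_S(-H)$, the $I$-function $I_{K_S}$ is computed directly by quantum Lefschetz applied to the $I$-function of $\mathbb P^3$ twisted by $\mathcal O(3)$ (for $S$) and by $\mathcal O(-1)$ (for $K_S$), yielding the explicit formula already recorded in the preceding discussion.

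Next I would rewrite $I_{K_S}$ as a ratio of $\Gamma$-functions and produce its analytic continuation via a Mellin--Barnes contour integral, replacing the sum over $d\in\mathbb Z_{\geq 0}$ by $\int_C \tfrac{ds}{e^{2\pi is}-1}(\cdots)$ along a contour $C$ that separates the poles at $s\in\mathbb Z_{\geq 0}$ from those at $s\in\mathbb Z_{<0}$ and at $s=-H/z-m/3$ for $m\geq 1$. Pushing $C$ to the left, the residues at $s=-1-l$ vanish because they carry a factor $H^4=0$ in $H^*(\mathbb P^3)$, while the residues at $s=-H/z-m/3$ survive. After the substitution $t^3=q^{-1}$ and regrouping by $m\equiv k\pmod 3$ with $k\in\{1,2\}$, the surviving series splits into two blocks each of the form
\[
\sum_{l\geq 0}\frac{t^{k+3l}\,\Gamma(\tfrac{k}{3}+l)^4}{\Gamma(\tfrac{k}{3})^4\,\Gamma(k+3l)\,\Gamma(1+\tfrac{k}{3}+l)},
\]
which are precisely the two narrow-sector components $I^{\on{reg}}_{\on{FJRW},k}$ of the regularized FJRW $I$-function of the Fermat cubic singularity $W=\sum x_i^3$ with $G=\langle J_W\rangle$, as recorded in \cite{Acosta}.

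Finally I would assemble the $H$-dependent scalar prefactors --- the ratio $\Gamma(1+\tfrac{H}{z})^4\Gamma(1-\tfrac{H}{z})/\Gamma(1+\tfrac{3H}{z})$ together with the Picard--Lefschetz-type factors $2\pi i\,e^{2\pi ik/3}/(e^{-2\pi iH/z}-e^{2\pi ik/3})$ --- into a single graded linear isomorphism $L$ from the narrow FJRW state space $\langle \phi_0,\phi_1\rangle$ to the narrow part of $H^*(S)$, and verify that $\bar I_{K_S}=L\circ I^{\on{reg}}_{\on{FJRW}}$. Since both $I$-functions lie on the Lagrangian cones of their respective theories and the fundamental solutions to the Dubrovin connections are determined by a single point on the cone, this matching promotes to an identification of the narrow quantum $D$-modules, which is the statement of the theorem. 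I expect the main obstacle to be checking that the $\hat\Gamma$-type prefactor intertwines the two gradings and pairings correctly; this is the analogue, in the log Calabi--Yau setting, of the symplectic transformation appearing in the LG/CY correspondence of \cite{CR} and \cite{CIR}, but with no asymptotic expansion required because $(S,-K_S)$ is genuinely log Calabi--Yau.
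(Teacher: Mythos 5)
Your proposal is correct and follows essentially the same route as the paper: reduce to the local theory of $K_S$ via the local/relative and orbifold/relative correspondences, analytically continue $I_{K_S}$ by a Mellin--Barnes contour argument (with the residues at $s=-1-l$ killed by nilpotency of $H$ and the surviving residues at $s=-H/z-m/3$ regrouped by $m\bmod 3$ after $t^3=q^{-1}$), and match the result, up to the $\Gamma$- and Picard--Lefschetz-type prefactors, with Acosta's regularized FJRW $I$-function so that the identification of narrow quantum $D$-modules follows.
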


\begin{remark}
Since $(S,-K_S)$ is log Calabi--Yau, we may consider Theorem \ref{thm-log-lg-cy-cubic} as a LG/(log CY) correspondence. The meaning on the Gromov--Witten side is clear: one considers the relative Gromov--Witten theory of log Calabi--Yau pairs. On the other hand, the meaning on the FJRW side is not clear to us at this moment. It may be interesting to find out if there is a direct enumerative meaning of the regularized FJRW $I$-function in the quantum singularity theory related to the log Calabi--Yau pair $(X,D)$.
\end{remark}

\subsection{Fano hypersurfaces in weighted projective spaces} In general, we can consider a degree $d$ Fano hypersurface $X$ in a weighted projective space $\mathbb P[w_1,\ldots, w_{N}]$ along with the smooth anticanonical divisor $D$ of $X$ and study the relation between relative Gromov--Witten theory of $(X,D)$ and the FJRW theory. The set-up is the following. 

Given a Fermat polynomial
\[
W=x_1^{d/w_1}+\cdots+x_n^{d/w_N},
\]
where $\on{gcd}(w_1,\ldots,w_N)=1$ and $W$ has a unique singularity at the origin. We also assume that $w_i$ divides $d$ for all $i\in\{1,\ldots, N\}$. The polynomial $W$ is quasi-homogeneous of degree $d$:
\[
W(\lambda^{w_1}x_1,\ldots,\lambda^{w_N}x_N)=\lambda^dW(x_1,\ldots,x_N),
\]
for all $\lambda\in \mathbb C$. Let $q_i:=w_i/d$ for $i\in \{1,\ldots,N\}$ and $G=\langle J_W\rangle$, where
\[
J_W:=\left(\exp\left(2\pi i q_1\right),\ldots,\exp\left(2\pi iq_N\right)\right)\in (\mathbb C^\times)^N.
\]

Define the set 
\[
\on{Nar}:=\{k \in \{0,\ldots, d-1\}| (k+1)w_i\not\in d\mathbb Z, \text{ for all } i=1,\ldots, N\}.
\]

The regularized FJRW $I$-function is given in \cite{Acosta}:
\[
I^{\on{reg}}_{\on{FJRW}}(\tau)=\sum_{k\in \on{Nar}}\sum_{l=0}^\infty \frac{\tau^{d^\prime(l+\frac{k+1}{d})}}{(dl+k)!}\frac{(-1)^{dl+k+1}}{\Gamma(1+d^\prime\frac{dl+k+1}{d})}\prod_{i=1}^N\frac{(-1)^{\lfloor q_i(dl+k)\rfloor}\Gamma(q_i(dl+k+1))}{\Gamma(q_i+\langle q_ik\rangle)}\phi_k,
\]
where $d^\prime:=-d+\sum_{i=1}^N w_i$ and $\{\phi_k\}_{k\in \on{Nar}}$ is a basis of the narrow state space for the FJRW theory of $(W,G)$.

On the Gromov--Witten side, we consider the relative Gromov--Witten theory of a degree $d$ Fano hypersurface $X$ with its smooth anticanonical divisor. In here, we just need genus zero relative Gromov--Witten invariants with maximal contact orders. 
 We further assume that $w_i$ divides $d^\prime:=-d+\sum_{i=1}^N w_i$ for all $i\in\{1,\ldots, N\}$.

\begin{theorem}\label{thm-log-lg-cy}
The narrow relative quantum D-module of $(X,-K_X)$ can be identified with the narrow quantum D-module of $(W,G)$ via analytic continuation. 
\end{theorem}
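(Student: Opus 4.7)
The plan is to mimic the strategy used for the cubic surface example (Theorem \ref{thm-log-lg-cy-cubic}), applying it to a general degree-$d$ Fano hypersurface $X \subset \mathbb P[w_1,\ldots,w_N]$ with smooth anticanonical divisor $-K_X$. First, by the local-relative correspondence of \cite{vGGR}, \cite{TY20b}, \cite{BNTY} (and the orbifold-relative correspondence), the narrow relative quantum $D$-module of $(X,-K_X)$ is identified with the narrow quantum $D$-module of the total space of $K_X$. Using quantum Lefschetz for the hypersurface $X \subset \mathbb P[w_1,\ldots,w_N]$ together with the concave twist by $K_X = -d'H$, I would write down the local $I$-function
\[
I_{K_X}(q,z) = (dH)\,q^{H/z} \sum_{d_0 \geq 0} q^{d_0}\, \frac{\prod_{k=1}^{d d_0}(dH+kz)\,\prod_{j=0}^{d' d_0 - 1}(-d'H - j z)}{\prod_{i=1}^{N}\prod_{k=1}^{w_i d_0}(w_i H + k z)},
\]
and then rewrite all the products as ratios of $\Gamma$-functions, as was done in the cubic case.

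Next, I would express the sum via a Mellin--Barnes type contour integral:
\[
I_{K_X} = (\text{prefactor})\cdot \int_C \frac{2\pi i \, ds}{e^{2\pi i s}-1}\, q^s\, \frac{\Gamma(1+dH/z + d s)\,\Gamma(1-d'H/z)}{\Gamma(1-d'H/z - s)\,\prod_{i=1}^N \Gamma(1 + w_i H/z + w_i s)},
\]
where $C$ is chosen so that the poles at $s=d_0 \in \mathbb Z_{\geq 0}$ lie to its right. Deforming $C$ to the left, one picks up contributions from two families of poles:
\begin{enumerate}
\item[(a)] the integer poles $s=-\ell-1$, $\ell \geq 0$, coming from $\frac{1}{e^{2\pi i s}-1}$;
\item[(b)] the fractional poles $s = -H/z - m/d$, $m \geq 1$ with $m \not\equiv 0 \pmod d$ (the case $m\in d\mathbb Z$ overlaps with (a) and is absorbed there), coming from $\Gamma(1+dH/z+ds)$.
\end{enumerate}
The key observation, generalizing the cubic surface computation, is that at $s=-\ell-1$ the residue is proportional to a power $H^{N-1}$ (arising by balancing the order of pole of $\Gamma(1+dH/z+ds)$ against the orders of poles of the $\prod_i \Gamma(1+w_i H/z+w_i s)$ factors), which vanishes on $X$ since $\dim X = N-2$; hence the family (a) contributes nothing. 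Only the fractional poles (b) survive.

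The crucial computational step is then to repackage the residues at the fractional poles so that they match the regularized FJRW $I$-function. I would introduce the change of variable $t^{d'} = q^{-1}$ (this is the step that uses the hypothesis $w_i \mid d'$ for all $i$, which ensures the exponents in $t$ become non-negative integers when grouped by the residue class $k \in \mathrm{Nar}$) and then reindex $m = k+1 + dl$ with $k \in \{0,\ldots,d-1\}$ and $l \geq 0$. For each fixed $k$, applying the Euler reflection formula $\Gamma(1-x)\Gamma(x) = \pi/\sin(\pi x)$ to convert the $\Gamma$-factors with arguments $1 - \langle q_i k\rangle - q_i m$ into $\Gamma$-factors with arguments $\langle q_i k\rangle + q_i m$ should produce exactly the factor $\prod_{i=1}^N \Gamma(q_i(dl+k+1))/\Gamma(q_i + \langle q_i k\rangle)$ that appears in $I^{\mathrm{reg}}_{\mathrm{FJRW}}$. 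The restriction $k \in \mathrm{Nar}$ emerges automatically: when $(k+1)w_i \in d\mathbb Z$ for some $i$, the corresponding $\Gamma$-factor in the numerator stays finite while a $\sin(\pi q_i(k+1))$ factor in the denominator vanishes, killing that contribution. Finally, matching coefficients yields
\[
\bar I_{K_X} = \sum_{k \in \mathrm{Nar}} M_k(H/z)\, I^{\mathrm{reg}}_{\mathrm{FJRW},k}(\tau),
\]
where $M_k$ is an explicit factor built from $\Gamma(1 \pm H/z\cdot \text{stuff})$ and $(e^{2\pi i (k+1)/d} - e^{-2\pi i H/z})^{-1}$, as in the cubic case.

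The main obstacle I foresee is bookkeeping in the residue extraction: correctly tracking the orders of the poles that coincide when $m$ is a multiple of $d$, and showing cleanly that the residues at integer $s$ all vanish in $H^*(X)$ for the general weighted case. Once the residue formula is in hand, the identification of the quantum $D$-modules follows from the fact that both $\bar I_{K_X}$ and $I^{\mathrm{reg}}_{\mathrm{FJRW}}$ lie on (or generate) the respective Givental cones, combined with the narrow state-space matching via the local-relative correspondence as in Theorem \ref{thm-log-lg-cy-cubic}.
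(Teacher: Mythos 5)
Your overall strategy is the one the paper uses: reduce via the local--relative/orbifold correspondence to the local theory of $K_X$, rewrite the hypergeometric series in terms of $\Gamma$-functions, perform a Mellin--Barnes contour deformation, observe that the residues at the integer poles vanish in $H^*(X)$ (your pole-order count giving a factor $H^{N-1}=0$ is in fact more explicit than the paper's one-line assertion), and reindex the residues at the fractional poles $s=-H/z-m/d$ by $m=k+dl+1$ so that, after the change of variable and the reflection formula, they assemble into the regularized FJRW $I$-function, with the restriction to $k\in\mathrm{Nar}$ enforced by the vanishing $\sin$-factors. The change of variable $t^{d'}=q^{-1}$ you use is the correct one (and is where $w_i\mid d'$ enters), consistent with the cubic example.

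There is, however, one concrete gap for the theorem as stated. Your starting $I$-function
\[
I_{K_X}(q,z)=(dH)\,q^{H/z}\sum_{d_0\ge 0}q^{d_0}\,(\cdots)
\]
sums only over integral degrees and carries no twisted-sector insertions; this is the $I$-function of $K_X$ only when all $w_i=1$. For a hypersurface in a genuinely weighted projective space, $X$ (and hence $K_X$) is an orbifold, and the correct local $I$-function sums over rational $n\ge 0$ with $nw_j\in\mathbb Z$ for some $j$, with values $\mathbf 1_{\langle -n\rangle}$ in the twisted sectors. The paper accordingly decomposes the $H$-function into components $H_{K_X,f}$ indexed by the sectors $f$ and carries out the contour argument for each component (using $w_j\mid d$ to keep $\bar f d\in\mathbb Z$). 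This matters for the conclusion: the narrow relative quantum $D$-module of $(X,-K_X)$ is built on the ambient Chen--Ruan state space, which includes the twisted sectors, so continuing only the untwisted component identifies at best a piece of the $D$-module, not the whole narrow $D$-module with that of $(W,G)$. The fix is routine --- run your residue computation sector by sector as in the paper --- but as written your proposal proves the statement only in the unweighted case.
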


\begin{proof}
By the local-orbifold correspondence \cite{vGGR}, \cite{TY20b} and \cite{BNTY} and the relative-orbifold correspondence\cite{ACW} and \cite{TY18}, we can consider the genus zero local Gromov--Witten theory of the total space of the canonical bundle $K_X$ of $X$. 

The $I$-function for $K_X$ is
\begin{align}
    I_{K_X}(q)=(dH)q^{H/z}\sum_{\substack{n\in \mathbb Q_{\geq 0}\\ \exists j: nw_j\in \mathbb Z }}q^n\frac{\prod_{k=1}^{nd}(dH+kz)(-1)^{nd^\prime}\prod_{k=1}^{nd^\prime}(d^\prime H+kz)}{\prod_{i=1}^N\prod_{0<k\leq nw_i, \langle k\rangle=\langle nw_i\rangle}(w_iH+kz)}\textbf{1}_{\langle -n\rangle }.
\end{align}

The $I$-function can be written in terms of $\Gamma$-functions
\begin{align*}
    I_{K_X}(q)&=dHq^{H/z}\frac{\prod_{i=1}^N \Gamma(1+\frac{w_i H}{z}-\langle -w_in\rangle)\Gamma(1-d^\prime\frac H z)}{\Gamma(1+\frac{dH}{z})}\\
    &\cdot\sum_{\substack{n\in \mathbb Q_{\geq 0}\\ \exists j: nw_j\in \mathbb Z }}q^n\frac{\Gamma (1+\frac{dH}{z}+nd)}{\Gamma (1-\frac {d^\prime H}{z}-nd^\prime)\prod_{i=1}^N\Gamma(1+\frac{w_i H}{z} +w_in)}\textbf{1}_{\langle -n\rangle }\\
    &=dHq^{H/z}\frac{\prod_{i=1}^N \Gamma(1+\frac{w_i H}{z}-\langle -w_in\rangle)\Gamma(1-d^\prime\frac H z)}{\Gamma(1+\frac{dH}{z})}\\
    &\cdot\sum_{\substack{n\in \mathbb Q_{\geq 0}\\ \exists j: nw_j\in \mathbb Z }}\on{Res}_{s=n}\frac{2\pi i}{e^{2\pi i s}-1}q^s\frac{\Gamma (1+\frac{dH}{z}+sd)}{\Gamma (1-\frac {d^\prime H}{z}-sd^\prime)\prod_{i=1}^N\Gamma(1+\frac{w_i H}{z} +w_is)}\textbf{1}_{\langle -n\rangle }.
\end{align*}

Following \cite{CIR}, we consider the $H$-functions. The regularized $H$-function for the FJRW theory is
\begin{align*}
H^{\on{reg}}_{\on{FJRW}}(\tau)&=\sum_{k\in \{0,\ldots,d-1\}}\sum_{l=0}^\infty \frac{\tau^{d^\prime(l+\frac{k+1}{d})}}{(dl+k)!}\frac{(-1)^{dl+k+1}}{\Gamma(1+d^\prime\frac{dl+k+1}{d})}\prod_{j=1}^N (-1)^{\lfloor q_j(dl+k)\rfloor}\Gamma(q_j(dl+k+1))\phi_k\\
&=\sum_{k\in \{0,\ldots,d-1\}}H^{\on{reg}}_{\on{FJRW},k}\phi_k.
\end{align*}
The $H$-function for $I_{K_X}$ is 
\begin{align*}
H_{K_X}(q)&=dHe^{\frac{H}{2\pi i}\log q}\sum_{\substack{n\in \mathbb Q_{\geq 0}\\ \exists j: nw_j\in \mathbb Z }}q^n\frac{\Gamma (1+\frac{dH}{2\pi i}+nd)}{\Gamma (1-\frac {d^\prime H}{2\pi i}-nd^\prime)\prod_{i=1}^N\Gamma(1+\frac{w_i H}{2\pi i} +w_in)}\textbf{1}_{\langle -n\rangle }\\
&=dHe^{\frac{H}{2\pi i}\log q}\sum_{\substack{n\in \mathbb Q_{\geq 0}\\ \exists j: nw_j\in \mathbb Z }}\on{Res}_{s=n}\frac{2\pi i}{e^{2\pi i s}-1}q^s\frac{\Gamma (1+\frac{dH}{2\pi i}+sd)}{\Gamma (1-\frac {d^\prime H}{2\pi i}-sd^\prime)\prod_{i=1}^N\Gamma(1+\frac{w_i H}{2\pi i} +w_is)}\textbf{1}_{\langle -n\rangle }\\
&=\sum_{f}H_{K_X,f}\textbf{1}_{f},
\end{align*}
where $f$ runs over the set
\[
\{0\leq f< 1| fw_j\in \mathbb Z \text{ for some } 1\leq j \leq N\},
\] 
$\bar{f}=\langle 1-f\rangle$ and
\[
H_{K_X,f}=dHe^{\frac{H}{2\pi i}\log q}\sum_{n\in \mathbb Z_{\geq 0}}\on{Res}_{s=n}\frac{2\pi i}{e^{2\pi i (s+\bar{f})}-1}q^{s+\bar{f}}\frac{\Gamma (1+\frac{dH}{2\pi i}+sd+\bar{f}d)}{\Gamma (1-\frac {d^\prime H}{2\pi i}-sd^\prime+\bar{f}d^\prime)\prod_{i=1}^N\Gamma(1+\frac{w_i H}{2\pi i} +w_is+\bar{f}w_i)}.
\]

We consider the contour integral 
\[
\int_C \frac{1}{e^{2\pi i (s+\bar{f})}-1}q^{s+\bar{f}}\frac{\Gamma (1+\frac{dH}{2\pi i}+sd+\bar{f}d)}{\Gamma (1-\frac {d^\prime H}{2\pi i}-sd^\prime+\bar{f}d^\prime)\prod_{i=1}^N\Gamma(1+\frac{w_i H}{2\pi i} +w_is+\bar{f}w_i)},
\]
where the contour $C$ is chosen such that the poles at $s=l$ are on the right of $C$ and the poles at $s=-1-l$ and at 
\[
s=-\frac{H}{2\pi i}-\frac{m}{d}-\bar{f},  m\geq 1
\]
are on the left of $C$, where $l$ is a non-negative integer.
Then the contour integral is the sum of residue at
\[
s=-1-l, l\geq 0 \text{ and } s=-\frac{H}{2\pi i}-\frac{m}{d}-\bar{f}, m\geq 1.
\]
The residue at $s=-1-l$ vanishes.

We have
\[
\on{Res}_{s=-\frac{H}{2\pi i}-\frac{m}{d}} \Gamma (1+\frac{dH}{2\pi i}+ds)=-\frac 1d \frac{(-1)^m}{\Gamma(m)}.
\]
Therefore, we obtain an analytic continuation of $H_{K_X,f}$:
\begin{align*}
dHe^{\frac{H}{2\pi i}}\sum_{m\geq 1}(-1)^m\frac{(2\pi i)e^{\frac{2\pi im}{d}}}{e^{-H}-e^{\frac{2\pi im}{d}}}\frac{q^{-\frac m d}}{\Gamma(m)\Gamma(1+\frac{d^\prime}{d}m )\prod_{i=1}^N\Gamma(1-q_im)}.
\end{align*}
Let $\tau^d=q^{-1}$ and $m=k+dl+1$, where $l\in \mathbb Z_{\geq 0}$. Then we have 
\begin{align*}
&dHe^{\frac{H}{2\pi i}}\sum_{k\in\{0,\ldots d-1\}}\sum_{l=0}^\infty(-1)^{k+dl+1}\frac{(2\pi i)e^{\frac{2\pi i(k+1)}{d}}}{e^{-H}-e^{\frac{2\pi i(k+1)}{d}}}\frac{q^{-\frac {k+dl+1} d}\prod_{i=1}^N\Gamma(q_i(k+dl+1))\sin(\pi q_i(k+dl+1))}{\pi^N(k+dl)!\Gamma(1+\frac{d^\prime}{d}(k+dl+1) )}\\
=&dHe^{\frac{H}{2\pi i}}\sum_{k\in\{0,\ldots d-1\}}(-1)^{k+1}\frac{(2\pi i)e^{\frac{2\pi i(k+1)}{d}}}{e^{-H}-e^{\frac{2\pi i(k+1)}{d}}}\frac{1}{\Gamma(1-q_i(k+1))\Gamma(q_i(k+1))}\sum_{l=0}^\infty\frac{q^{-\frac {k+dl+1} d}\prod_{i=1}^N\Gamma(q_i(k+dl+1))}{(k+dl)!\Gamma(1+\frac{d^\prime}{d}(k+dl+1) )}\\
=&dHe^{\frac{H}{2\pi i}}\sum_{k\in\{0,\ldots d-1\}}(-1)^{k+1}\frac{(2\pi i)e^{\frac{2\pi i(k+1)}{d}}}{e^{-H}-e^{\frac{2\pi i(k+1)}{d}}}\frac{1}{\Gamma(1-q_i(k+1))\Gamma(q_i(k+1))}H^{\on{reg}}_{\on{FJRW},k}.
\end{align*}
Therefore, we obtained an analytic continuation between the $I$-function for $K_X$ and the regularized $I$-function for the FJRW theory of $(W,G)$.
\end{proof}

\bibliographystyle{amsxport}
\bibliography{main}

\end{document}